\newtheorem{theorem}{Theorem}[section] 
\newtheorem{proposition}[theorem]{Proposition} 
\newtheorem{conjecture}[theorem]{Conjecture} 
\newtheorem{lemma}[theorem]{Lemma}
\newenvironment{proof}{\begin{trivlist}\item{\bf{Proof.}}}
  {\hfill\rule{2mm}{2mm}\end{trivlist}}
\newcommand{\ZZ}{\mathbb{Z}}
\newcommand{\carre}{\Box}
\newcommand{\rk}{\operatorname{rk}}
\newcommand{\op}{\operatorname{op}}
\newcommand{\SN}{\mathrm{SN}}
\newcommand{\TA}{\mathbb{A}}
\newcommand{\TB}{\mathbb{B}}
\newcommand{\TD}{\mathbb{D}}
\newcommand{\T}{\mathbf{T}}
\renewcommand{\mod}{\operatorname{mod}}
\title{Stokes posets and serpent nests}
\author{F. Chapoton\footnote{L'auteur a bénéficié d'une aide de
    l'Agence Nationale de la Recherche (projet Carma, référence
    ANR-12-BS01-0017). Il remercie aussi l'Institut Mittag-Leffler pour son accueil chaleureux et ses excellentes conditions de travail.}}
\date{\today}
\begin{document}

\maketitle

\begin{abstract}
  We study two different objects attached to an arbitrary
  quadrangulation of a regular polygon. The first one is a poset,
  closely related to the Stokes polytopes introduced by
  Baryshnikov. The second one is a set of some paths configurations
  inside the quadrangulation, satisfying some specific
  constraints. These objects provide a generalisation of the existing
  combinatorics of cluster algebras and nonnesting partitions of type
  $\TA$.
\end{abstract}

{\bf keywords:} poset, quadrangulation, Tamari lattice

{\bf MSC:} 05E, 06A11, 13F60

\section*{Introduction}

The research leading to this article started with the desire to
understand the apparition of associahedra in an article of Baryshnikov
\cite{baryshnikov} and in an article of Kapranov and Saito
\cite{kapranov_saito}. Associahedra, also known as Stasheff polytopes,
are classical in algebraic topology, where they are used to define and
study associativity up to homotopy \cite{stasheff}. More recently,
they have made their way into the theory of cluster algebras, in which
they control the combinatorial aspects of the type $\TA$ cluster
algebras \cite{fominz_Y}. It was not clearly obvious how to relate the
contexts used by Baryshnikov and Kapranov-Saito to either
associativity or cluster algebra theory. Moreover both articles were
seeing associahedra as members of a larger family of polytopes, all
closely tied together, whereas associahedra usually stand alone.

Another strand of research then came into play, centred on the Tamari
lattices \cite{tamfest}. The Tamari lattices are closely related to
associahedra and associativity, and can be defined by orienting the
associativity relation from left-bracketing to right-bracketing. They
are also well-understood in the context of cluster algebras, where
they are just one among several Cambrian lattices of type $\TA$,
depending on the choice of an orientation of the Dynkin
diagram \cite{reading, reading_speyer}. It is remarkable that all the
Cambrian lattices attached to type $\TA_n$ share the same underlying
graph, which is nothing else that the graph of vertices and edges of the
associahedra. This is also the flip graph of triangulations and the
mutation graph of cluster algebras of type $\TA$.

The Tamari lattices have yet another, maybe less well-known,
interpretation. They can be seen as a special case of the partial
orders defined by Happel-Unger \cite{happel_unger} and Riedtmann-Schofield \cite{riedtmann} on the sets of
tilting modules over quivers.

One can consider the notion of module over the Tamari lattices,
defined for example using their incidence algebras. It turns out that
there is a very interesting subset of modules, indexed by the set of
quadrangulations of a regular polygon. It would be too long to tell
here how precisely these modules were stumbled upon, but it involved
an anticyclic operad made with all Tamari lattices, and a mysterious
map from the $K_0$ of Tamari lattices to rational functions
\cite{chap_mould}. All this seems in fact to take place naturally in
the Tamari lattices seen as partial orders on tilting modules.

Anyway, these modules indexed by quadrangulations can be seen as
living in the derived categories of modules over the Tamari lattices. A
natural question to ask was then: what are the morphisms between these
modules ? It appeared that they may be described using flips of
quadrangulations, under the condition that a flip should not be
repeated twice.

At this point, the connection was made with the work of Baryshnikov,
which also involved quadrangulations and their flips. It turned out
that the match was perfect. This is essentially the story behind the
content of the article, which concentrates on the combinatorial side
and leaves most of the representation theory aspects aside.

Let us now describe what is done, and then comment on the results.

In \autoref{poset_section}, a poset is associated with every
quadrangulation in a regular polygon. This is done using the
compatibility relation between quadrangulations introduced by
Baryshnikov. First, one defines directed graphs (digraphs) by
orienting the allowed flips in a specific way. Then it takes some work
to show that these digraphs are Hasse diagrams of posets, that we
choose to call the Stokes posets. The main tool is an inductive
description of the digraphs, relating a quadrangulation and a smaller
one with a leaf square removed. One also shows that the associated
undirected graphs are connected and regular.

In the articles of Baryshnikov and his collaborators on the subject
\cite{baryshnikov, baryX, hickoketal}, no digraphs were considered,
but instead polytopes were associated with quadrangulations. Their
graphs of vertices and edges coincides by definition with the associated undirected
graphs of our digraphs.

In the rest of the first section, one proposes a conjectural manner
to define the same digraphs without having to use the compatibility
condition. The proof that this works is sadly missing.

In \autoref{poset_property_section}, one considers examples and
properties of the Stokes posets. In particular, one proves that the Tamari
lattice is recovered as a special case, for a very regular
quadrangulation.

It is then shown that the posets attached to some quadrangulations can
be written as Cartesian products of smaller posets of the same
kind. This happens under the condition that there is a bridge in the
quadrangulation. One then says a few words about the simplicial
complexes attached to the quadrangulation, which can be thought of as
the dual of Baryshnikov's Stokes polytopes. In particular, enumerative
aspects of this simplicial complex can be encoded into an $F$-triangle.

Next, a conjecture is proposed about what happens when a
quadrangulation is twisted along an edge, namely that the undirected graph,
simplicial complex and $F$-triangle should not change.

Then \autoref{exc_section} is a rather sketchy proposal for a vast
generalisation of the theory of Stokes posets to other root systems
and finite Coxeter groups. This is stated first in the setting of
exceptional sequences, and then using factorisation of Coxeter
elements into reflections, or equivalently maximal chains in the
noncrossing partition lattices.

In the next \autoref{nests_section}, one introduces combinatorial
configurations fitting inside quadrangulations. They are made of
elementary pieces called \textit{serpents}, and therefore named
\textit{serpent nests}. These combinatorial objects should be closely
related to the posets introduced in the first section. For every
quadrangulation $Q$, there should be as many elements in the poset
attached to $Q$ as there are serpent nests in $Q$. The serpent nests
should be thought of as playing the same role as nonnesting partitions
play for classical root systems.

After their definition, the section contains various results about
serpent nests, that sometimes are similar to some statements or
conjectures about the Stokes posets made in the previous sections.

First, it is shown that serpent nests do factorise in presence of a
bridge, just as Stokes posets do. Then an enumerative study is made,
by introducing $h$-vectors and $H$-triangles that counts serpent
nests according to their ranks. A duality is obtained, that implies a
symmetry of the $h$-vector. The $H$-triangle is conjectured to be
related to the $F$-triangle, by the same formula that work between
clusters $F$-triangles and nonnesting partitions $H$-triangles for all
root systems (see for example \cite{thiel_long} and
the references therein).

It is then shown that, for the same regular quadrangulation already
considered in the first section, the serpent nests are in a simple
bijection with nonnesting partitions of type $\TA$. Another
interesting family of examples is considered, for which the number of
serpent nests is given by a Lucas sequence. This gives essentially the
(bridge-less) quadrangulations with the smallest possible numbers of
serpent nests.

A paragraph is devoted to an interesting strategy to count serpent
nests, namely cut quadrangulations in two parts along one edge, compute something related to open serpent nests for
every half, and reconstitute the result from that. This seems also to be
related to the Dynkin type $\TB$ and the usual folding procedure of
simply-laced Dynkin diagrams.

Then \autoref{parabo_section} deals with an algebraic structure that
gathers all the quadrangulations together, namely a commutative
algebra endowed with a derivation. Product is disjoint union or union
along a bridge and the derivative is given by cutting at inner
edges. This is analogue to the parabolic structure on quivers and Dynkin diagrams.

The last \autoref{enum_section} contains elementary enumerative
results about various sorts of quadrangulations. The equivalence
classes under twisting are also counted, as they should correspond to
the distinct undirected graphs of Stokes posets and do describe
quadrangulations with the same serpent nests. For some reason related
to fans and not studied in this article, quadrangulations that do not
contain a cross are also considered.

\medskip

Let us now summarise the article and give further comments. Taking as
input a quadrangulation, one defines a poset and a finite set. For
some special quadrangulations, one recovers the Tamari lattices and
the nonnesting partitions associated to the Dynkin type $\TA$ by the
theory of cluster algebras and root systems. It is in fact expected that all the posets
will be lattices, and also that other Cambrian lattices can be
obtained for other ribbon quadrangulations.

It is therefore very tempting to make an analogy where
quadrangulations play the role of quivers, and quadrangulations up to
twist that of Dynkin diagrams. Then one gets analogues of clusters,
positive clusters, exchange graphs, but only at a combinatorial
level. It seems that most of the properties known in the combinatorial
side of cluster theory do extend. It would be interesting to see if
analogues of cluster variables could make sense.

As explained before, the article \cite{kapranov_saito} of Kapranov and
Saito was one source of motivation. The relationship with our results
is not as direct as in the case of Baryshnikov, but should exist. It
would probably be best seen using the noncrossing trees and
exceptional sequences point of view explained in
\autoref{exc_section}.

Flips in quadrangulations have been considered also in the context of
$m$-analogues of cluster theory for $m=2$. The $2$-Cambrian lattices
introduced recently in \cite{cataland} are defined by allowing two
consecutive flips inside an hexagon, but not three. The precise
relationships with this article remain to be explored.

As a side remark, one can note that very similar combinatorial objects
as the ones involved here (quadrangulations and noncrossing trees)
have appeared in the study of multiple zeta values
\cite{ecalle,dupont}. Whether there is a connection or not remains to
be settled.

The name of Stokes polytopes was used by Baryshnikov because of the
relation of his work with the Stokes phenomenon. In the article
\cite{naka}, the Stokes phenomenon is also connected to cluster
algebras. It is not clear if these connections are the same or at
least could fit in a common framework.

Let us finish by saying that there are several aspects missing in the
present article, the most important ones being the in-depth study of
related fans and polytopes, only sketched in \cite{baryshnikov}, and
also the closely related question to check that the posets are indeed
lattices. These questions are left for future work.

\section{Stokes posets}

\label{poset_section}

Informally, a \textit{quadrangulation} is a partition of the interior
of a regular polygon into quadrilaterals. This is possible if and only
if the regular polygon has an even number of vertices.

More formally, one considers a regular polygon with vertices numbered
from $0$ to $2n+1$ in the negative (clockwise) direction. A quadrangulation is
defined as a collection of pairs of (not consecutive) vertices of this
polygon, such that the associated edges do not cross and define only
quadrilateral regions.

\begin{figure}[ht]
  \begin{center}
    \includegraphics[height=4cm]{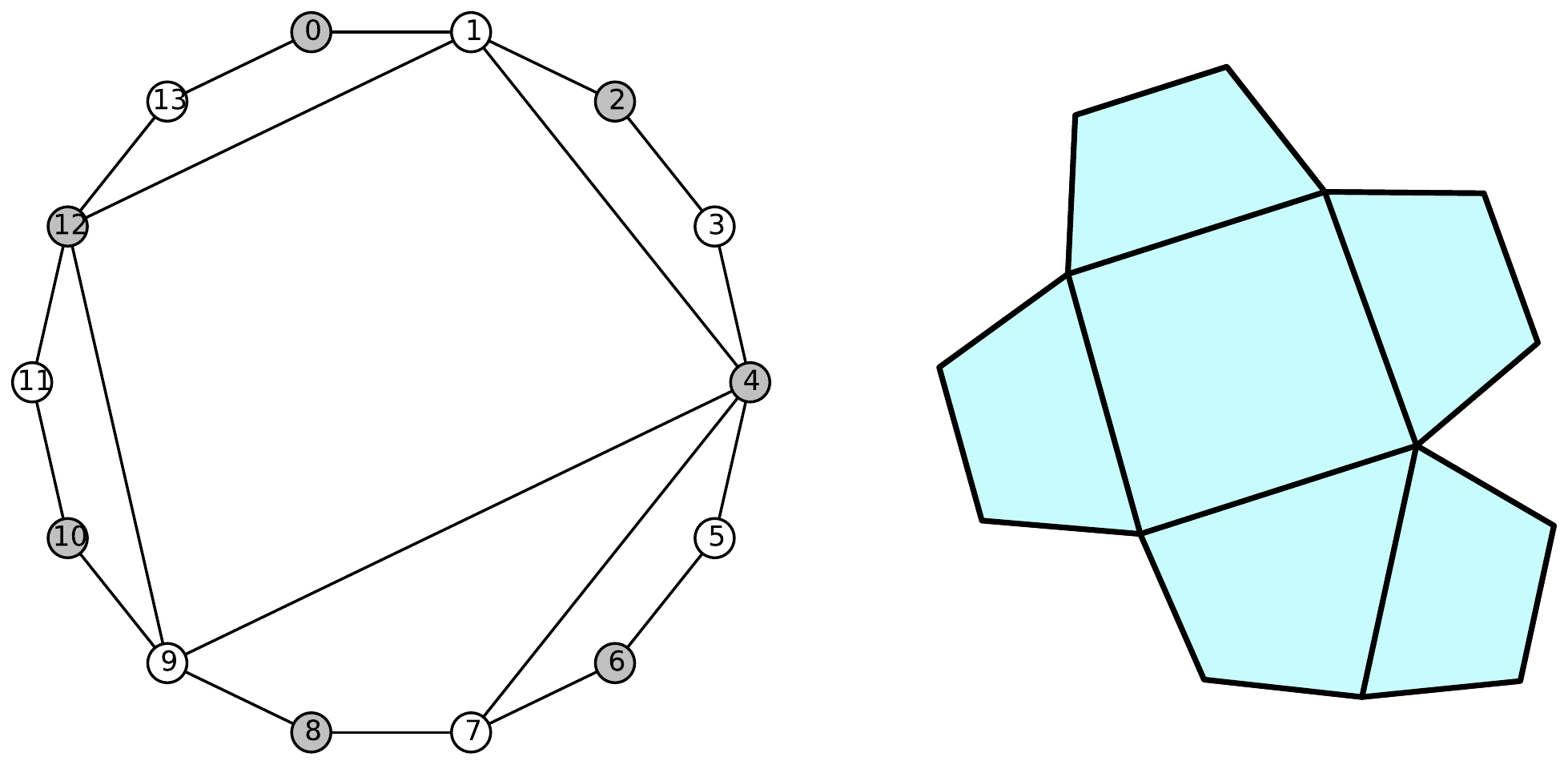}
    \caption{Two views of the same quadrangulation ($n=6$)}
    \label{expl_type}
  \end{center}
\end{figure}
It will sometimes be convenient to depict quadrangulations by
deforming the quadrilaterals so that they become more regular and of
similar area. The boundary is then no longer a regular polygon, but a
simple polygonal closed curve, as in the right of \autoref{expl_type}.

The number of distinct quadrangulations inside the polygon with $2n+2$
vertices is also the number of ternary trees with $2n+1$ leaves and is
classically given by
\begin{equation}
  \frac{1}{2n+1}\binom{3n}{n}.
\end{equation}
The bijection is the obvious planar duality, after a boundary edge has
been chosen as root.

Let us fix some terminology. A quadrangulation in the regular polygon
with $2n+2$ vertices is made of $2n+2$ \textit{boundary edges} and
$n-1$ \textit{inner edges}. The $n$ quadrilateral regions will be called
\textit{squares}. The word \textit{edge} will generally be used to
designate both inner and boundary edges, unless the context make it
clear that it must be of one kind only.

\subsection{Compatibility}

\label{compat}

Y. Baryshnikov has introduced a compatibility condition between
quadrangulations. Let us describe this precisely.

It will be convenient for us to consider a fixed quadrangulation $Q$
drawn inside a regular polygon with $2 n + 2$ vertices, and to define only
compatibility with $Q$. Let us choose an alternating black and white
colouring of the vertices of the regular polygon. This choice is
arbitrary, and the compatibility condition will not depend on it.

The quadrangulation $Q$ is seen as a (blue and dashed) theatre
backdrop, on which one superimposes another (red) regular polygon,
slightly rotated in the negative (clockwise) direction by an angle of
$\frac{2\pi}{4 n + 4}$, so that the vertices interlace. The
black-and-white colouring of the vertices of the rotated polygon is
inherited from the initial colouring.

\begin{figure}[ht]
  \begin{center}
    \includegraphics[height=5cm]{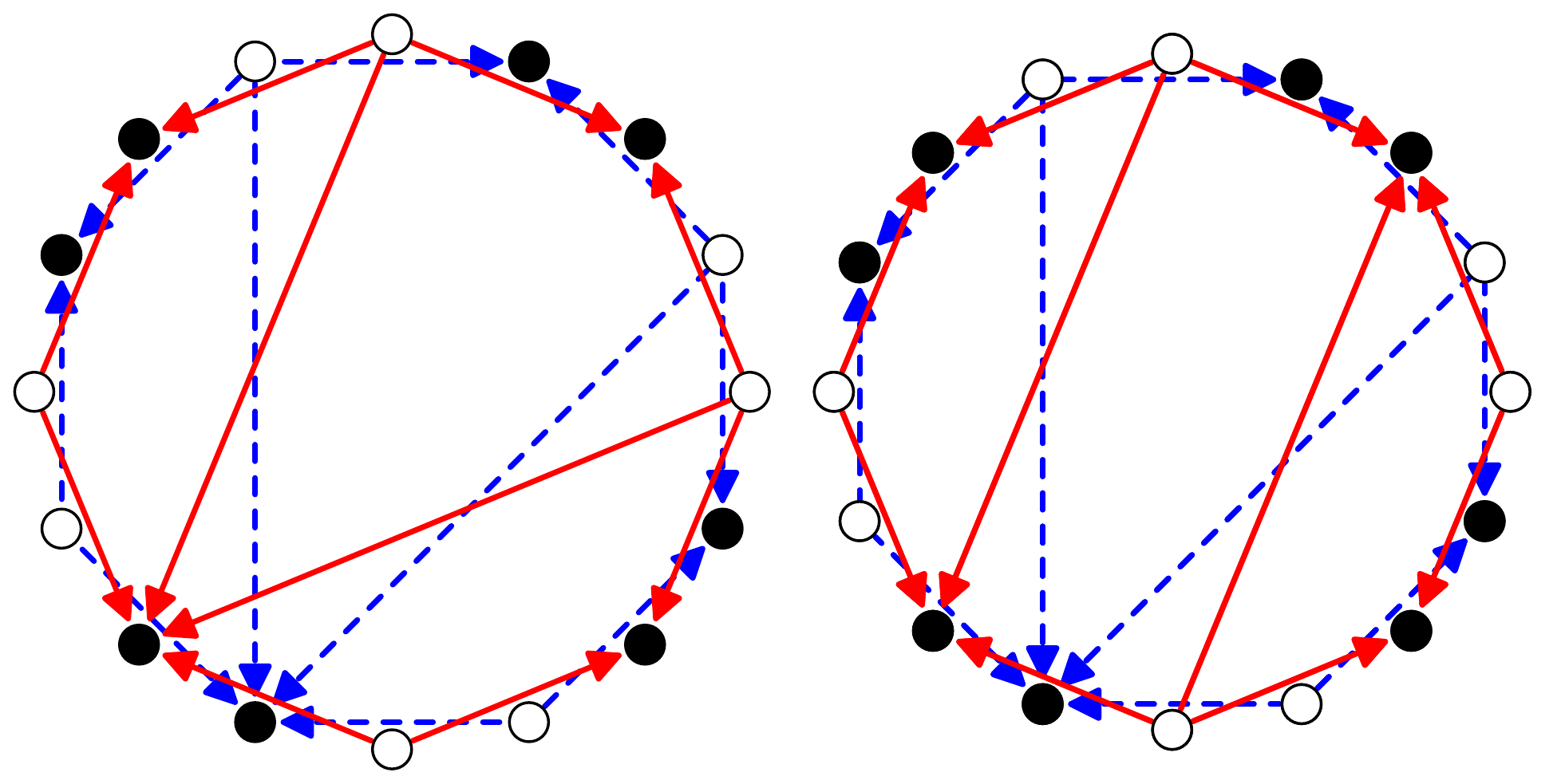}

    \caption{Backdrop $Q$ (blue and dashed) and two $Q$-compatible $Q'$ and $Q''$ (red)}
    \label{expl_comp}
  \end{center}
\end{figure}

By convention, all the edges (red or blue) are oriented from white vertices $\circ$ to black
vertices $\bullet$.

\medskip

An edge $i$ in the red polygon is said to be \textit{compatible} with an edge
$j$ of $Q$ if the pair (red oriented edge $i$, blue oriented edge $j$)
defines the positive orientation of the plane.

An edge $i$ in the red polygon is said to be $Q$-\textit{compatible}
if it is compatible with all edges of $Q$. This is equivalent to say
that $i$ is compatible with all inner edges of $Q$, because the
condition is always satisfied with respect to boundary edges of $Q$.

Moreover, compatibility with $Q$ is also automatic for the boundary edges
of the red polygon.

A quadrangulation $Q'$ is $Q$-\textit{compatible} if and only if all its
edges are $Q$-compatible. This is equivalent to require that every
inner edge of $Q'$ is compatible with all inner edges of $Q$.

Note that $Q$-compatibility is unchanged under switching the choice of the
black and white colouring of the vertices, as this changes the
orientation of all edges.

\medskip

It is clear that there is a finite number of $Q$-compatible
quadrangulations. This number depends on $Q$. There are always two
simple ones. First, $Q$ (seen in the red polygon after a negative
rotation by $\frac{2\pi}{4 n + 4}$) is clearly compatible with itself. The
other one is the quadrangulation obtained from the backdrop $Q$ by
positive rotation by $\frac{2\pi}{4 n + 4}$ (or from the red $Q$ by
positive rotation by $\frac{2\pi}{2 n + 2}$). Let us call it $\tau(Q)$\footnote{This notation is chosen to remind of the Auslander-Reiten translation.}.

\subsection{Flips}

\begin{proposition}
  \label{quad_flip}
  Let $Q$ be a fixed quadrangulation. Let $Q'$ be a $Q$-compatible
  quadrangulation. For every inner edge $i$ of $Q'$, there exists
  exactly one other inner edge $j$ such that $Q' \setminus \{i\}
  \cup \{j\}$ is a $Q$-compatible quadrangulation.
\end{proposition}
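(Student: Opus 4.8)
The plan is to localise the statement to the hexagon $H$ formed by the two squares of $Q'$ adjacent to $i$, and then to encode compatibility through the angular directions of edges. Write $r_1,\dots,r_6$ for the vertices of $H$ in cyclic order, with $i$ one of its three long diagonals, say $i=\{r_1,r_4\}$, and let $j_1=\{r_2,r_5\}$ and $j_2=\{r_3,r_6\}$ be the other two long diagonals. If $Q'\setminus\{i\}\cup\{j\}$ is again a quadrangulation, then $j$ must be one of $j_1,j_2$, since deleting $i$ leaves a hexagonal region that can only be re-subdivided by a long diagonal of $H$. Because a flip changes no other edge of $Q'$, and the remaining inner edges of $Q'$ are untouched and hence still compatible with every inner edge of $Q$, the quadrangulation $Q'\setminus\{i\}\cup\{j_k\}$ is $Q$-compatible if and only if $j_k$ itself is compatible with every inner edge of $Q$. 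So it suffices to prove that \emph{exactly one of $j_1,j_2$ is compatible with all inner edges of $Q$.}

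Next I would orient every edge from its white endpoint to its black one and record the resulting direction $\delta(e)\in\RR/2\pi\ZZ$; then a red edge $e$ is compatible with a blue edge $g$ precisely when $\delta(g)-\delta(e)\in(0,\pi)$. A parity check — all edges of a quadrangulation join vertices at odd cyclic distance, so a red inner diagonal and a blue inner edge can never be parallel — shows that the directions of the inner edges of $Q$ differ from $\delta(i),\delta(j_1),\delta(j_2)$, so no boundary case occurs. Since $H$ is convex and the alternating colouring reverses the orientation of exactly one of its three long diagonals, one checks that $\delta(i),\delta(j_1),\delta(j_2)$ occur around the circle with all three cyclic gaps lying in $(0,\pi)$; hence every other direction lies in exactly two of the three open half-circles $(\delta(i),\delta(i)+\pi)$, $(\delta(j_1),\delta(j_1)+\pi)$, $(\delta(j_2),\delta(j_2)+\pi)$. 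As $i$ is compatible with $Q$, every inner edge $g$ of $Q$ has $\delta(g)\in(\delta(i),\delta(i)+\pi)$, so it is compatible with exactly one of $j_1,j_2$; since $Q$ has at least one inner edge (as $Q'$ has $i$, so $n\ge 2$), $j_1$ and $j_2$ cannot both be compatible with $Q$. This settles uniqueness.

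What remains — and this is the main obstacle — is existence: the map $\varepsilon$ sending each inner edge $g$ of $Q$ to the unique element of $\{j_1,j_2\}$ compatible with $g$ must be constant. In terms of directions, compatibility with $j_1$ and compatibility with $j_2$ partition the admissible directions of inner edges of $Q$ into two complementary arcs separated by a short arc on which an edge would be compatible with neither, so constancy of $\varepsilon$ amounts to ruling out (a) an inner edge of $Q$ with direction in that middle arc, and (b) two inner edges of $Q$ with directions in the two different outer arcs. The extra input is that $Q'$ is a genuine quadrangulation containing the whole hexagon $H$ and that \emph{all} of its inner edges, not only $i$, are compatible with every inner edge of $Q$: thus the directions of the inner edges of $Q$ lie in the arc cut out by $Q'$, while those of the inner edges of $Q'$ lie in the arc cut out by $Q$. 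I would exploit this either by a direct position argument — placing the endpoints of two hypothetical inner edges of $Q$ with opposite $\varepsilon$-values relative to $r_1,\dots,r_6$ and producing either a crossing of two edges of $Q$ or an edge of $Q'$ incompatible with one of them — or, following the inductive philosophy used later in the paper, by removing a leaf square of $Q$, describing how a $Q$-compatible $Q'$ and the flip of $i$ restrict to the smaller polygon, and concluding by induction on $n$. I expect the combinatorial control of what a $Q$-compatible quadrangulation looks like near a leaf square, together with checking that the flip is inherited, to be the heaviest part of the argument.
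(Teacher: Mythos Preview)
Your localisation to the hexagon $H$ and the observation that the only candidates for $j$ are the two other long diagonals $j_1,j_2$ are correct and match the paper's Lemma~\ref{hexa_flip}. The real gap is your directional criterion. Compatibility of a red edge $e$ with a blue edge $g$ is \emph{not} equivalent to $\delta(g)-\delta(e)\in(0,\pi)$: that condition is the right one only when $e$ and $g$ actually cross; when they do not cross they are automatically compatible, regardless of direction (this is exactly why boundary edges never obstruct compatibility). Consequently, from ``$i$ is compatible with every inner edge of $Q$'' you cannot infer that every inner edge of $Q$ has direction in the half-circle $(\delta(i),\delta(i)+\pi)$ --- inner edges of $Q$ that miss $H$ entirely can point anywhere and are compatible with all three of $i,j_1,j_2$. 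This invalidates your uniqueness step (the mere existence of \emph{some} inner edge in $Q$ does not by itself exclude one of $j_1,j_2$) and removes the hypothesis on which your existence discussion is built.

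The paper's route is shorter and stays inside $H$. One uses that the six \emph{sides} of $H$ are themselves $Q$-compatible: this forces every inner edge of $Q$ that meets $H$ to enter through a side of one type ($a_{i+1}\to b_i$) and leave through a side of the other type ($a_j\to b_j$). Since $Q$ is a quadrangulation it cannot leave the hexagonal region empty, so at least one inner edge of $Q$ must cross $H$ from a side to the \emph{opposite} side (a ``long inner edge''); any two such long edges use the same pair of opposite sides, or else they would cross inside $Q$. With that single long edge in hand, one checks directly which two of the three diagonals of $H$ are $Q$-compatible and which one is not. Existence and uniqueness come out together, with no induction needed.
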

\begin{proof}
  This follows from Lemma \ref{hexa_flip} below.
\end{proof}

\begin{lemma}
  \label{hexa_flip}
  Let $Q$ be a fixed quadrangulation. Consider an hexagon $H$ made of
  six $Q$-compatible edges. Then there are exactly two $Q$-compatible
  inner edges in the interior of $H$.
\end{lemma}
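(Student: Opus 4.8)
The plan is to reduce everything to a local analysis inside a single hexagon $H$ whose six boundary edges are all $Q$-compatible, and then to count $Q$-compatible diagonals of $H$ directly. A hexagon has exactly three ways to be cut by a single (inner) edge into a quadrilateral and a... wait, a hexagon cannot be split by one diagonal into two quadrilaterals; rather, a hexagon is quadrangulated by a choice of two non-crossing diagonals forming a path, and there are three such quadrangulations of a hexagon, each using two inner edges drawn from a pool of... let me recount: the "short" diagonals of a labelled hexagon $v_0v_1\cdots v_5$ are $v_0v_2, v_1v_3, v_2v_4, v_3v_5, v_4v_0, v_5v_1$ — six of them — and the three long diagonals $v_0v_3, v_1v_4, v_2v_5$ are forbidden in a quadrangulation because they would create triangles together with... actually no. The inner edges that can appear in a quadrangulation of $H$ into two quadrilaterals: each such quadrangulation uses two diagonals sharing one endpoint and cutting off a quadrilateral; checking parity of the endpoints (a quadrilateral has an even number of boundary vertices) forces the two diagonals to connect same-colour vertices. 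So the candidate inner edges come in two colour classes, the three black-black "long-ish" ones and the three white-white ones — but only one colour class is consistent with $H$'s own structure. I would first pin down this combinatorial bookkeeping: the inner edges available inside $H$ form two triangles' worth of chords, and exactly three quadrangulations of $H$ exist, each using a pair of them.

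Next I would bring in $Q$-compatibility. By the orientation convention, every edge of $Q$ and every candidate chord of $H$ is oriented white-to-black, and $Q$-compatibility of a chord $e$ is the condition that $(e, f)$ is positively oriented for every inner edge $f$ of $Q$ that $e$ "sees". Since the six boundary edges of $H$ are already $Q$-compatible, I would argue that the relevant inner edges of $Q$ all pass through the interior of $H$ (edges of $Q$ outside $H$ impose no new constraint on chords of $H$, by the same reasoning used earlier to dismiss boundary edges). So the question becomes purely local: among the (at most six, really three per colour class, and only one colour class is geometrically admissible) candidate chords of $H$, how many satisfy the half-plane/orientation condition against the portion of $Q$ lying inside $H$. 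The key geometric fact I expect to need is that the $Q$-compatible chords with a fixed orientation, inside a convex region, are exactly those lying on one side of a "rotating" family of reference edges — i.e. $Q$-compatibility cuts out a connected arc in the circular sequence of candidate chords. Combined with the parity constraint (two chords, same colour, meeting at a vertex), a short case check should leave exactly two admissible inner chords.

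The main obstacle, I expect, is the geometry of the compatibility condition rather than the enumeration: one must show that, inside $H$, the set of $Q$-compatible chords of the admissible colour is nonempty and "convex" in the right sense, so that precisely two of the six short diagonals survive (equivalently, precisely one of the three hexagon-quadrangulations... no — two diagonals, and the statement claims two $Q$-compatible inner edges, which is consistent with a single surviving quadrangulation of $H$ using those two edges, but also with two edges that belong to two different quadrangulations sharing a flip). I would handle this by placing coordinates/angles on the $2n+2$ vertices, writing the orientation determinant condition as an inequality on the angular positions of the chord endpoints versus those of each inner edge of $Q$ inside $H$, and showing these inequalities are simultaneously satisfiable for exactly two of the candidate chords. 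Once the count of $Q$-compatible inner edges of $H$ is established to be two, Proposition 1.4 follows: given $Q'$ and an inner edge $i$ of $Q'$, the two squares of $Q'$ adjacent to $i$ form such a hexagon $H$ with six $Q$-compatible boundary edges, so $H$ contains exactly two $Q$-compatible inner edges, one of which is $i$ and the other is the unique flip partner $j$.
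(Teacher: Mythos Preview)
Your plan rests on a combinatorial miscount that propagates through the whole sketch. A hexagon with $6$ vertices is quadrangulated by \emph{one} inner edge, not two: in the paper's notation a $(2n+2)$-gon needs $n-1$ inner edges, so a hexagon ($n=2$) needs exactly one. That one edge must split $H$ into two quadrilaterals, hence it is one of the three \emph{long} diagonals connecting opposite vertices; with the alternating colouring $a_0,b_0,a_1,b_1,a_2,b_2$ these are $a_0b_1$, $a_1b_2$, $a_2b_0$, all of which are white-to-black, not same-colour. So the candidate set has size $3$, and the lemma asserts that exactly two of these three survive $Q$-compatibility. Your discussion of ``two non-crossing diagonals forming a path'', ``six short diagonals'', and ``same-colour'' classes is simply not the right picture.

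More seriously, your outline is missing the one genuine idea: you must use somewhere that $Q$ is a quadrangulation, not just an arbitrary collection of blue edges. Your proposed angle/determinant inequalities would, in the degenerate case where \emph{no} inner edge of $Q$ meets the interior of $H$, leave all three long diagonals $Q$-compatible, giving three rather than two. The paper closes this gap by observing that the $Q$-compatibility of the six boundary edges of $H$ constrains how blue edges can enter and exit $H$, and that if no blue edge entered through one side of $H$ and exited through the \emph{opposite} side, then $Q$ would contain an undivided hexagonal region---impossible for a quadrangulation. This forces at least one ``long'' blue edge through $H$; all such long edges must use the same pair of opposite sides (else they cross), and once that direction is fixed, a direct orientation check shows exactly one of the three diagonals fails compatibility while the other two pass. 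No coordinates or inequalities are needed---just the existence of that long blue edge and a three-case verification against it.
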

\begin{proof}
  Let $a_0,b_0,a_1,b_1,a_2,b_2$ be the vertices of the hexagon $H$ in
  clockwise order, with indices in $\ZZ/3\ZZ$. One can assume without
  restriction that the colours are white for $a$ vertices and black for
  $b$ vertices. It follows from the $Q$-compatibility of $H$ that
  inner edges of $Q$ can only enter $H$ by crossing a inner edge
  $a_{i+1} \rightarrow b_{1}$ of $H$ and can only exit by crossing a
  inner edge $a_i \rightarrow b_i$ of $H$. If there is no inner edge
  in $Q$ entering $H$ through $a_{i+1} \rightarrow b_{i}$ and leaving $H$
  through $a_{i+2} \rightarrow b_{i+2}$ for some $i$ (call it a
  \textit{long inner edge}), then $Q$ contains an empty hexagon, which
  is absurd. Therefore there is at least a long inner edge in $Q$, say
  from $a_2 \rightarrow b_{1}$ to $a_{0} \rightarrow b_{0}$. All long
  inner edges must then enter and exit $H$ by crossing the same edges of $H$, for otherwise they would cross. Then among the three
  possible inner edges inside the hexagon $H$, one can check that
  $a_0 \rightarrow b_1$ and $a_2 \rightarrow b_0$ are $Q$-compatible,
  but that $a_1 \rightarrow b_2$ is not.
\end{proof}

\subsection{Flip graphs}

Proposition \ref{quad_flip} implies that one can move between
$Q$-compatible quadrangulations by removing any inner edge and
inserting another one. These moves between quadrangulations will be
called \textit{flips}.

This defines a graph with vertices the $Q$-compatible quadrangulations
and edges the flips. This is clearly a regular graph where every
vertex has arity $n-1$. Let us denote it by $\gamma_Q$ and call it the
\textit{flip graph} of $Q$-compatible quadrangulations. Later on, one
will show that this graph is connected.



\medskip

Let us now describe how one can orient the edges of the flip graph
$\gamma_Q$.

\begin{figure}[ht]
  \begin{center}
    \includegraphics[height=5cm]{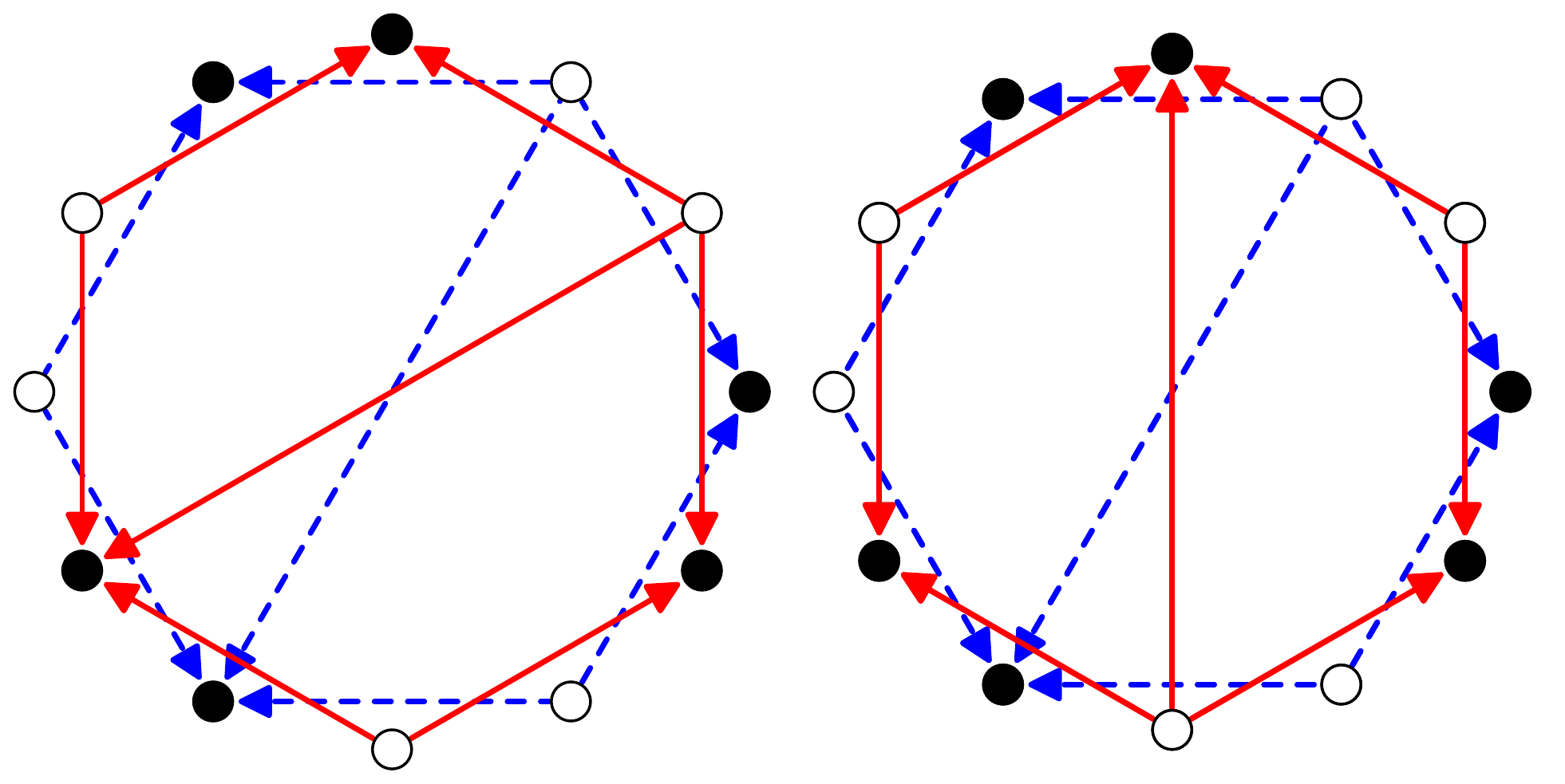}
    \caption{Simple example of flip, oriented from left \textbf{(id)} to right \textbf{(op)}}
    \label{flip}
  \end{center}
\end{figure}

Consider a flip relating two quadrangulations $Q'$ and
$Q''$. Only the inner edge inside the modified hexagon $H$ is
changed. The two possible inner edges $i'$ and $i''$ can be
distinguished as follows. Recall from the proof of Lemma
\ref{hexa_flip} that there must be at least one \textit{long inner edge} $j$ in $Q$
entering the hexagon and leaving it through the opposite side. The
starting vertex of the inner edge of $H$ is adjacent
\begin{description}
\item[(id)] either to the starting vertex of the long inner edge $j$,
\item[(op)] or to the ending vertex of the long inner edge $j$.
\end{description}

Let us orient the flips from the case of adjacent starting vertices \textbf{(id)} to the case of far-away starting vertices \textbf{(op)}.

This is illustrated in \autoref{flip}, with the \textbf{(id)} case on the
left and the \textbf{(op)} case on the right.

For an oriented flip from $Q'$ to $Q''$, one says that it is an
\textit{out-flip} for $Q'$ and an \textit{in-flip} for $Q''$. For
every $Q$-compatible quadrangulation $Q'$, the number of in-flips plus
the number of out-flips is $n - 1$, as every inner edge corresponds
either to an in-flip or to an out-flip.

This allows to define an orientation of the flip graph $\gamma_Q$. Let
us call this oriented graph $\Gamma_Q$.

\medskip

To study properties of the graph $\Gamma_Q$, one will proceed by induction on
the \textit{size} $n$ of the quadrangulation $Q$, which is its number of squares.

In any quadrangulation with $n\geq 2$ squares, one can always find a
boundary edge $e$ of $Q$ that bounds a square $s$ with exactly one
neighbour square\footnote{This is because a quadrangulation is a tree of squares, hence has at least one leaf.}. Up to switching black and white, one can assume that
the edge $e$ goes from white to black in counterclockwise order. Let
$s$ be the square bordering the chosen edge $e$ and let
$Q^{\setminus s}$ be the quadrangulation obtained from $Q$ by removing
$s$.

Let $Q'$ be any $Q$-compatible quadrangulation. The underlying red
polygon has two white vertices $i$ and $i+2$ (modulo $2n+2$) that lie
just before and just after the chosen blue edge $e$. Recall that the
vertices are numbered in clockwise order.

One can associate with $Q'$ a $Q^{\setminus s}$-compatible
quadrangulation $\theta_s(Q')$ as follows. Pictorially, one collapses
the boundary section of the red polygon between white vertices $i$ and
$i+2$ to a single white vertex in a smaller red polygon. Edges of $Q'$
then get identified if they happen to coincide after the collapsing.

Let us describe now the fibres of the map $\theta_s$. If there are $k$
inner edges in $\theta_s(Q')$ starting at the collapsed vertex, then
there must be $k+1$ inner edges in $Q'$ starting either at $i$ or
$i+2$. There are $k+2$ choices for the distribution of these $k+1$
starting points between $i$ and $i+2$. Therefore the fibre has $k+2$
elements.

Moreover, these $k+2$ elements are naturally totally ordered by
oriented flips, as there is a unique sequence of $k+1$ oriented flips going
from the case where all $k+1$ edges start at $i$ to the case where all
$k+1$ edges start at $i+2$.  This gives a natural total order on
every fibre of $\theta_s$.

Oriented flips between $Q$-compatible quadrangulations are of two kinds.
\begin{proposition}
  \label{dichotomie}
  Let $Q' \longrightarrow Q''$ be a flip of $Q$-compatible
  quadrangulations. Then exactly one of the following holds:
  \begin{description}
  \item[(inFib)] $\theta_s(Q')=\theta_s(Q'')$ and the flip goes down one step in this fibre of $\theta_s$,
  \item[(outFib)] $\theta_s(Q') \longrightarrow \theta_s(Q'')$ is a flip between  $Q^{\setminus s}$-compatible quadrangulations.
  \end{description}
\end{proposition}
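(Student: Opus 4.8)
The plan is to use that a flip is a local modification, confined to one hexagon, and to trace that hexagon through the collapsing map $\theta_s$.

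First I would collect the local data. By Lemma \ref{hexa_flip} the flip $Q'\to Q''$ takes place inside a hexagon $H$ of six $Q$-compatible red edges: $Q'$ and $Q''$ carry the same inner edges outside $H$ and carry respectively the two $Q$-compatible diagonals $i'$ and $i''$ of $H$, while there is a long inner edge $j$ of $Q$ entering $H$ through one side and leaving through the opposite one, the \textbf{(id)}/\textbf{(op)} orientation being read off from whether the tail of the diagonal of $H$ is adjacent to the tail or to the head of $j$. On the other hand $\theta_s$ collapses the arc $i,\,i+1,\,i+2$ of the red polygon (with $i+1$ the black red-vertex strictly inside it) to one white vertex $\ast$ and leaves every red edge not incident to that arc untouched; since $\theta_s(Q')$ has one inner edge less than $Q'$, the only square of $Q'$ genuinely affected is the square $\sigma'$ resting against the two boundary edges $i$-$(i+1)$ and $(i+1)$-$(i+2)$ of the arc, which is necessarily of the shape $\{i,i+1,i+2,c'\}$ and is destroyed by $\theta_s$ — so that the $(k+2)$-element fibres, totally ordered by moving the tails of inner edges from $i$ to $i+2$, are exactly the admissible choices of $c'$.

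Then I would split according to whether the flip changes $\sigma'$. If $i'$ is not a side of $\sigma'$, then $\sigma'$ is the same square in $Q'$ and in $Q''$; using the $Q$-compatibility of the sides of $H$ to exclude that $H$ has both $i$ and $i+2$ among its vertices, $\theta_s$ sends $H$ to a genuine hexagon $\bar H$ of six $Q^{\setminus s}$-compatible edges, carrying $i'$ and $i''$ to its two $Q^{\setminus s}$-compatible diagonals. Moreover the long inner edge $j$ of $Q$ ruling $H$ is not the inner edge $j_0$ of $Q$ bordering the removed square $s$ — the unique inner edge of $Q$ that becomes a boundary edge of $Q^{\setminus s}$, and one whose crossing pattern is anchored at the arc, hence impossible when $\sigma'\notin H$ — so $\theta_s(j)$ is still an inner edge of $Q^{\setminus s}$ and is long for $\bar H$, the side-crossing pattern being preserved by $\theta_s$. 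Since the \textbf{(id)}/\textbf{(op)} label depends only on the local incidence of the flipped diagonal with $j$, which is transported verbatim, $\theta_s(Q')\to\theta_s(Q'')$ is an oriented flip of $Q^{\setminus s}$-compatible quadrangulations and $\theta_s(Q')\neq\theta_s(Q'')$: this is case \textbf{(outFib)}. If instead $i'$ is a side of $\sigma'$, the flip replaces $\{i,i+1,i+2,c'\}$ by $\{i,i+1,i+2,c''\}$ for some $c''\neq c'$; after collapsing both squares disappear and nothing else changes, so $\theta_s(Q')=\theta_s(Q'')$ and, by the description of the fibres, $Q'$ and $Q''$ lie in a common fibre of $\theta_s$ and differ by one of its covering steps. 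This is case \textbf{(inFib)}; the two cases are clearly exclusive, the first forcing $\theta_s(Q')\neq\theta_s(Q'')$ and the second $\theta_s(Q')=\theta_s(Q'')$.

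The delicate point is the last assertion, that in case \textbf{(inFib)} the oriented flip runs \emph{down} the fibre, i.e.\ increases the number of inner edges with tail at $i+2$. Here the flip hexagon $H$ contains $\sigma'$, and the only inner edge of $Q$ that can cross $H$ through two opposite sides is forced to be the edge $j_0$ bordering $s$; writing out the resulting local picture and keeping track of the orientation of the chosen boundary edge $e$ with respect to the black-and-white colouring, one checks that the convention ``\textbf{(id)} is the case where the tail of the diagonal of $H$ is adjacent to the tail of $j_0$'' singles out the configuration with one fewer inner edge at $i+2$, so that the flip does point from more tails at $i$ to more tails at $i+2$. Matching these two conventions is the only genuinely computational step; everything else is bookkeeping.
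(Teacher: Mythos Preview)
Your argument is correct and proceeds along the same lines as the paper's own proof, which also splits on the relationship between the flip hexagon $H$ and the removed leaf: the paper's dichotomy is whether the inner blue edge $f$ bounding $s$ is a long inner edge of $H$, and this is equivalent to your criterion that the flipped diagonal $i'$ borders the square $\sigma'$ (each forces $i,i+1,i+2\in H$ and conversely). Your write-up is actually more detailed than the paper's, notably in checking that the long inner edge $j\neq j_0$ survives under $\theta_s$ so that the orientation is preserved in the \textbf{(outFib)} case; a small quibble is that the exclusion of both $i$ and $i+2$ from $H$ follows already from convexity of $H$ (if both are in $H$ then so is $i+1$), not from $Q$-compatibility, and in the \textbf{(inFib)} case $j_0$ need not be the \emph{only} long inner edge of $H$, though it is one and that is all you use.
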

\begin{proof}
  Let $H$ the red hexagon that contains and defines the flip. Let $f$
  be the inner blue edge bounding the square $s$.

  Assume first that $f$ is a long inner edge in $H$. Then $H$ must has
  two opposite sides made of an edge starting from $i$ and an edge
  starting from $i+2$. There is at most one such hexagon where an
  out-flip is possible, and this flip does not change the image by
  $\theta_s$, but moves down by one step in the total order inside the
  fibber. This is the situation \textbf{(inFib)}.

  Otherwise, $f$ is not a long inner edge in $H$. In this case, the
  hexagon $H$ remains an hexagon in the reduced quadrangulation
  $Q^{\setminus s}$, hence the flip induces a flip inside
  $Q^{\setminus s}$. This is the situation \textbf{(outFib)}.
\end{proof}

We will say that these are \textit{in-fibre flips} and \textit{out-fibre flips}.

\begin{lemma}
  \label{lemmeAB}
  For every sequence of two flips $Q' \overset{\textbf{(inFib)}}{\longrightarrow} Q'' \overset{\textbf{(outFib)}}{\longrightarrow} Q'''$, one can find
  a quadrangulation $Q^{\sharp}$ and flips
  \begin{equation}
  Q' \overset{\textbf{(outFib)}}{\longrightarrow} Q^{\sharp} \overset{\textbf{(inFib)}}{\longrightarrow} \dots \overset{\textbf{(inFib)}}{\longrightarrow} Q'''.
\end{equation}
\end{lemma}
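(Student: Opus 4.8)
I would prove this by a local analysis of the two hexagons involved in the two given flips, using the induction-free combinatorial description of flips via hexagons and long inner edges established in the proofs of Lemma~\ref{hexa_flip} and Proposition~\ref{dichotomie}. Write the first flip $Q' \to Q''$ as happening inside a red hexagon $H_1$ which (by \textbf{(inFib)}) has two opposite sides starting at the vertices $i$ and $i+2$ straddling the collapsed blue edge $e$; write the second flip $Q'' \to Q'''$ as happening inside a hexagon $H_2$ in which the blue edge $f$ bounding the square $s$ is \emph{not} long. The key case distinction is whether $H_1$ and $H_2$ overlap (share inner edges) or are essentially disjoint.

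\textbf{Disjoint case.} If $H_1$ and $H_2$ involve disjoint sets of inner edges of $Q''$, then the two flips commute: the \textbf{(outFib)} flip inside $H_2$ is already available at $Q'$ (the edge it removes is present in $Q'$ since $H_1\cap H_2=\emptyset$), producing some $Q^\sharp$, and then the \textbf{(inFib)} flip inside $H_1$ can still be performed at $Q^\sharp$, landing at $Q'''$. Here the chain of \textbf{(inFib)} flips has length one, and one just checks that the orientations are preserved — i.e.\ that commuting the two flips does not reverse either arrow — which follows because the \textbf{(id)}/\textbf{(op)} distinction for a flip depends only on the local configuration inside its own hexagon relative to a long inner edge of $Q$, and that configuration is untouched by a disjoint flip.

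\textbf{Overlapping case.} This is the main obstacle. When $H_1$ and $H_2$ share an inner edge, performing the \textbf{(outFib)} flip first genuinely changes which in-fibre position $Q'$ sits at, and one may need several \textbf{(inFib)} steps to get back down to $Q'''$ — hence the ``$\dots$'' in the statement. The clean way to handle this is to work inside the fibres of $\theta_s$. Both $Q'$ and $Q''$ lie in the same fibre $F$ of $\theta_s$ (a totally ordered chain of $k+2$ elements by the discussion preceding Proposition~\ref{dichotomie}), with $Q''$ one step below $Q'$. The \textbf{(outFib)} flip $Q''\to Q'''$ projects to a flip $\bar Q := \theta_s(Q'')=\theta_s(Q') \to \theta_s(Q''') =: \bar Q'$ of $Q^{\setminus s}$-compatible quadrangulations. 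I would show that the fibre $F'=\theta_s^{-1}(\bar Q')$ also contains an element lying ``above'' the class of $Q'$ in a compatible sense, take $Q^\sharp$ to be the top of $F'$ reached by the \textbf{(outFib)} flip from the top of $F$ — but since $Q'$ need not be the top of $F$, more precisely: apply the \textbf{(outFib)} flip to $Q'$ itself. The point is that the out-fibre flip is defined by a hexagon $H_2$ in which $f$ is not long, and such a hexagon persists verbatim when we move within the fibre (moving within the fibre only redistributes edge-endpoints between $i$ and $i+2$, away from $H_2$), so the same flip is available at $Q'$, giving $Q' \xrightarrow{\textbf{(outFib)}} Q^\sharp$ with $\theta_s(Q^\sharp)=\bar Q'$. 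Then $Q^\sharp$ and $Q'''$ both lie in the fibre $F'$, and $Q^\sharp$ is exactly one fibre-position above $Q'''$ (because $Q'$ was one above $Q''$ and the out-fibre flip is an order-isomorphism-like map between the relevant fibre positions), so a single \textbf{(inFib)} flip $Q^\sharp \to Q'''$ finishes it — in fact in this formulation the chain again has length one, which would strengthen the lemma; if the interaction of the two hexagons forces $Q^\sharp$ to sit higher, one descends step by step through \textbf{(inFib)} flips within $F'$, each such step being available since it is just the canonical descent in a fibre of $\theta_s$.

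\textbf{What to watch.} The delicate points are: (1) verifying that ``$f$ not long in $H_2$'' is preserved under the in-fibre move $Q'\to Q''$, so that the same out-fibre flip really is available at $Q'$ — this should follow because being long refers to crossing a pair of opposite sides of $H_2$, and the in-fibre flip of Proposition~\ref{dichotomie} takes place in a different hexagon $H_1$ whose defining feature is precisely that it \emph{does} have sides starting at both $i$ and $i+2$; (2) controlling the orientations throughout, i.e.\ checking every arrow in the constructed chain points the right way, which reduces to the local \textbf{(id)}/\textbf{(op)} criterion applied hexagon by hexagon; and (3) the bookkeeping when $H_1$ and $H_2$ share exactly one edge, which is the genuinely new configuration and the one where the telescoping ``$\dots$'' of \textbf{(inFib)} flips may actually be needed. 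I expect step (3) to be where the real work lies, and I would organize it by enumerating, up to the symmetries of the hexagon, the few ways two hexagons sharing an edge can be positioned relative to the collapsed edge $e$ and a long inner edge $j$ of $Q$.
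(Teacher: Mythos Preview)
Your disjoint case matches the paper exactly. The overlapping case is where your argument has a genuine gap.

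You write that in the overlapping case ``such a hexagon [$H_2$] persists verbatim when we move within the fibre (moving within the fibre only redistributes edge-endpoints between $i$ and $i+2$, away from $H_2$)''. But this is precisely what fails when $H_1$ and $H_2$ overlap: the in-fibre flip $Q'\to Q''$ changes the inner diagonal of $H_1$, and if that diagonal is one of the six boundary edges of $H_2$ (which is exactly the overlapping situation), then $H_2$ is \emph{not} a hexagon of $Q'$ at all. So ``apply the same \textbf{(outFib)} flip to $Q'$'' is not well-defined; there is no hexagon $H_2$ sitting in $Q'$ waiting to be flipped. Your own ``What to watch'' item (3) flags this configuration but your main text has already assumed it away. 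The fibre-theoretic reformulation does not help here: knowing that $\theta_s(Q')=\theta_s(Q'')$ and that an out-fibre flip exists at $Q''$ does not by itself produce an out-fibre flip at $Q'$ landing in the same target fibre.

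The paper handles the overlapping case by a different and more direct route: it observes that two hexagons sharing an edge together fill an octagon, and then simply enumerates the $Q$-compatible fillings of that octagon (there turn out to be either $4$ or $5$), checking by inspection that the desired \textbf{(outFib)}-then-\textbf{(inFib)} path exists, with either $1$ or $2$ trailing \textbf{(inFib)} steps. This is less conceptual than what you were aiming for, but it closes the gap you left open; if you want to pursue the fibre approach, you would need to explain concretely what out-fibre flip replaces the no-longer-available $H_2$-flip at $Q'$, and that analysis will in effect reproduce the octagon case check.
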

\begin{proof}
  If the two hexagons involved in the two consecutive flips do not
  overlap, the flips do just commute. One can simply exchange them.

  Otherwise everything happens inside an octagon. There are
  essentially just two cases to consider: there can be either $4$ or
  $5$ ways to fill the octagon. In both cases, one can find the
  expected sequence of flips by inspection. It will involve $1$ or $2$ final
  in-fibre flips.
\end{proof}

\begin{proposition}
  \label{tri_out_in}
  For every pair of quadrangulations $Q'$ to $Q''$ related by a
  sequence of flips, one can find a sequence of flips from
  $Q'$ to $Q''$ starting with out-fibre flips and ending
  with in-fibre flips.
\end{proposition}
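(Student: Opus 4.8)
The plan is to prove this by a standard ``bubble-sort'' rewriting argument, using Lemma~\ref{lemmeAB} as the key commutation step. Since Proposition~\ref{dichotomie} tells us that every flip between $Q$-compatible quadrangulations is either an in-fibre flip \textbf{(inFib)} or an out-fibre flip \textbf{(outFib)}, any sequence of flips from $Q'$ to $Q''$ is a word in these two letters. The goal is to rewrite such a word so that all \textbf{(outFib)} letters come before all \textbf{(inFib)} letters. First I would fix a sequence $Q' = Q_0 \to Q_1 \to \dots \to Q_m = Q''$ and look for an occurrence of the forbidden pattern, namely an index where an \textbf{(inFib)} flip is immediately followed by an \textbf{(outFib)} flip. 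Lemma~\ref{lemmeAB} replaces such a two-step pattern $Q_{i-1} \overset{\textbf{(inFib)}}{\to} Q_i \overset{\textbf{(outFib)}}{\to} Q_{i+1}$ by a sequence starting with one \textbf{(outFib)} flip and ending with one or two \textbf{(inFib)} flips, which moves an \textbf{(outFib)} letter strictly to the left.

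The main obstacle is termination: the rewriting in Lemma~\ref{lemmeAB} can \emph{lengthen} the word (it may produce two trailing in-fibre flips instead of one), so a naive length-based argument does not work, and one must exhibit a genuine decreasing measure. The natural choice is a measure that does not see the in-fibre flips at all. I would argue as follows: the number of \textbf{(outFib)} flips in the sequence is an invariant of the rewriting move (it stays equal to one \textbf{(outFib)} on both sides of Lemma~\ref{lemmeAB}, so globally the count of \textbf{(outFib)} letters is unchanged), and for each \textbf{(outFib)} letter we record the number of \textbf{(inFib)} letters that occur before it. Summing these over all \textbf{(outFib)} letters gives a nonnegative integer which strictly decreases under the move, since the move pushes one \textbf{(outFib)} letter past exactly one \textbf{(inFib)} letter and leaves the relative order of that \textbf{(outFib)} letter with respect to all \emph{other} \textbf{(outFib)} and \textbf{(inFib)} letters unchanged (the one or two new trailing \textbf{(inFib)} letters sit to the right of the relevant \textbf{(outFib)} letter, hence do not contribute). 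When this statistic reaches zero, no \textbf{(outFib)} letter has any \textbf{(inFib)} letter before it, i.e.\ the sequence has the desired form.

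There is one subtlety to address before invoking Lemma~\ref{lemmeAB} repeatedly: the lemma is stated for a pattern $Q' \overset{\textbf{(inFib)}}{\to} Q'' \overset{\textbf{(outFib)}}{\to} Q'''$ and outputs a sequence all of whose intermediate steps other than the first are \textbf{(inFib)} flips, so the replacement is again a word in the two letters \textbf{(inFib)}, \textbf{(outFib)}, and Proposition~\ref{dichotomie} guarantees this dichotomy persists throughout; thus the rewriting stays inside the class of flip-sequences we are analysing, and the inductive step is legitimate. Assembling these pieces: starting from an arbitrary flip-sequence from $Q'$ to $Q''$, repeatedly apply Lemma~\ref{lemmeAB} to the leftmost occurrence of the pattern \textbf{(inFib)} followed by \textbf{(outFib)}; by the decreasing-measure argument this terminates after finitely many steps, producing a flip-sequence from $Q'$ to $Q''$ of the form (some out-fibre flips) followed by (some in-fibre flips), which is exactly the claim. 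I expect the only place requiring genuine care in the full write-up is making the monovariant precise, since one must verify that the possibly-two new in-fibre flips introduced by Lemma~\ref{lemmeAB} really do land to the right of the out-fibre flip being moved, so that they contribute nothing to the weighted count.
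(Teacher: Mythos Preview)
Your approach matches the paper's exactly: apply Lemma~\ref{lemmeAB} repeatedly at the leftmost \textbf{(inFib)}--\textbf{(outFib)} pattern, observe that the number of out-fibre flips is preserved, and argue termination. However, your specific termination measure has a genuine gap. You claim that the sum $\sum_{O} \#\{\text{in-fibre flips preceding }O\}$ (over all out-fibre flips $O$) strictly decreases, but this fails when Lemma~\ref{lemmeAB} outputs two trailing in-fibre flips and there are further out-fibre flips to the right. For instance, if the word is $I\,O\,O$ and the leftmost $IO$ rewrites to $O\,I\,I$, the new word is $O\,I\,I\,O$: your sum is $1+1=2$ before and $0+2=2$ after. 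With the word $I\,O\,O\,O$ the sum goes from $3$ to $4$. Your parenthetical ``the one or two new trailing \textbf{(inFib)} letters sit to the right of the relevant \textbf{(outFib)} letter, hence do not contribute'' overlooks that they \emph{do} contribute to the counts attached to out-fibre flips further to the right.

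The fix, which is what the paper's phrase ``the first out-fibre flip not already packed at the beginning gets strictly closer to the beginning'' really encodes, is to use lexicographic order on the tuple $(c_1,\dots,c_N)$, where $N$ is the (invariant) number of out-fibre flips and $c_k$ is the number of in-fibre flips before the $k$-th out-fibre flip. Rewriting at the leftmost $IO$ leaves $c_1=\dots=c_{j-1}=0$ unchanged, decreases the first nonzero entry $c_j$ by one, and may increase later entries; this is a strict decrease in lex order on $\mathbb{N}^N$, which is well-founded since $N$ is fixed.
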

\begin{proof}
  This follows from the previous lemma \ref{lemmeAB}, by repeated
  application at the first possible place in the sequence. This
  rewriting preserves the number of out-fibre flips. This
  process is finite, because at every reduction step, the first
  out-fibre flip not already packed at the beginning get
  strictly closer to the beginning of the sequence.
\end{proof}

Using these tools, one can now proceed to proofs by induction on $n$.

\begin{proposition}
  Every sequence of flip-moves is finite.
\end{proposition}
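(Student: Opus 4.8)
The plan is to argue by induction on the size $n$ of $Q$, feeding on the reduction map $\theta_s$ together with the dichotomy of Proposition \ref{dichotomie}. When $n\leq 1$ there are $n-1\leq 0$ inner edges, hence no flip at all, and the statement holds vacuously. So assume $n\geq 2$, fix a leaf square $s$ of $Q$ bordering a boundary edge $e$ as in the discussion preceding Proposition \ref{dichotomie}, and assume the result is already known for all quadrangulations of size $n-1$, in particular for $Q^{\setminus s}$.

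Suppose, for contradiction, that there is an infinite sequence of oriented flips $Q_0\longrightarrow Q_1\longrightarrow Q_2\longrightarrow\cdots$ between $Q$-compatible quadrangulations. Apply $\theta_s$ termwise. By Proposition \ref{dichotomie}, each step $Q_i\longrightarrow Q_{i+1}$ is either of type \textbf{(inFib)}, so that $\theta_s(Q_i)=\theta_s(Q_{i+1})$, or of type \textbf{(outFib)}, so that $\theta_s(Q_i)\longrightarrow\theta_s(Q_{i+1})$ is an oriented flip between $Q^{\setminus s}$-compatible quadrangulations. Deleting the repetitions produced by the \textbf{(inFib)} steps yields a sequence of oriented flips in $\Gamma_{Q^{\setminus s}}$, which is finite by the induction hypothesis. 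Hence only finitely many of the steps $Q_i\longrightarrow Q_{i+1}$ are of type \textbf{(outFib)}.

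Choose $N$ such that every step $Q_i\longrightarrow Q_{i+1}$ with $i\geq N$ is of type \textbf{(inFib)}. Then $\theta_s(Q_N)=\theta_s(Q_{N+1})=\cdots$, so the quadrangulations $Q_N,Q_{N+1},\dots$ all lie in a single fibre of $\theta_s$. That fibre is a finite set (of size $k+2$, with $k$ the number of inner edges at the collapsed vertex), totally ordered by oriented flips as recalled before Proposition \ref{dichotomie}, and by the very definition of \textbf{(inFib)} each of these steps strictly decreases the position in this total order. A finite totally ordered set has no infinite strictly decreasing sequence, which is the desired contradiction.

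The only delicate point is the bookkeeping of the middle step: one must be sure that contracting the \textbf{(inFib)} stutters really produces a legitimate path in $\Gamma_{Q^{\setminus s}}$, neither creating nor destroying flips. This is precisely what Proposition \ref{dichotomie} guarantees, so nothing further is needed; in particular Proposition \ref{tri_out_in} is not invoked for this statement, though one could alternatively phrase the argument by taking a hypothetical directed cycle, straightening it via Proposition \ref{tri_out_in}, and projecting by $\theta_s$ to reach the same contradiction.
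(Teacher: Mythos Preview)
Your proof is correct and follows essentially the same approach as the paper: induction on $n$, projecting an assumed infinite sequence by $\theta_s$, and using Proposition~\ref{dichotomie} together with the finiteness and total order of the fibres to reach a contradiction. The paper's own argument is simply a terser version of what you wrote; your additional care in checking that contracting the \textbf{(inFib)} stutters yields a genuine flip sequence in $\Gamma_{Q^{\setminus s}}$ is a welcome clarification rather than a different idea.
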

\begin{proof}
  By induction on the number of squares $n$. This is obvious for the
  case $n=1$, where there is no flip at all.

  Otherwise, let us consider an infinite sequence of flips. One can
  map it to a sequence made either of flips in the smaller
  quadrangulation $Q^{\setminus s}$ or a down-step in the total order
  of a fibre of $\theta_s$. Because these fibres are finite, one can
  obtain an infinite sequence of flips in $Q^{\setminus s}$. This is
  absurd by induction.
\end{proof}

It follows that there cannot be any oriented cycles in $\Gamma_Q$.

\begin{theorem}
  The flip graph $\Gamma_Q$ is the Hasse diagram of a partial order.
\end{theorem}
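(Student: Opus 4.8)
The plan is to derive the theorem from two ingredients: that $\Gamma_Q$ has no oriented cycles, which has just been established, and that $\Gamma_Q$ has no \emph{redundant} edge, meaning that whenever $Q'\longrightarrow Q''$ is an edge there is no directed path of length $\geq 2$ from $Q'$ to $Q''$. The first makes the transitive closure of $\Gamma_Q$ a strict partial order; the second says exactly that the edges of $\Gamma_Q$ are precisely the covering relations of that partial order, so the two facts together give the statement. Thus the whole content is the absence of redundant edges, and I would prove it by induction on the size $n$, using the reduction map $\theta_s$ together with Propositions \ref{dichotomie} and \ref{tri_out_in}. The base case $n=1$ is vacuous.

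For the inductive step, fix a leaf square $s$ and an edge $Q'\longrightarrow Q''$, and suppose for contradiction that there is a directed path $P$ from $Q'$ to $Q''$ of length $\ell\geq 2$. By Proposition \ref{tri_out_in} I may assume $P$ is a run of $a$ out-fibre flips followed by a run of $b$ in-fibre flips, with $a+b=\ell\geq 2$. Applying $\theta_s$ turns out-fibre flips into genuine flips of $Q^{\setminus s}$-compatible quadrangulations and in-fibre flips into identities (Proposition \ref{dichotomie}), so $\theta_s(P)$ is a directed path of length $a$ from $\theta_s(Q')$ to $\theta_s(Q'')$ in $\Gamma_{Q^{\setminus s}}$. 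I would then split according to the type of the edge $Q'\longrightarrow Q''$.

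If $Q'\longrightarrow Q''$ is of type \textbf{(inFib)}, then $\theta_s(Q')=\theta_s(Q'')$, so $\theta_s(P)$ is a closed directed walk in the acyclic graph $\Gamma_{Q^{\setminus s}}$; hence $a=0$ and $P$ lies entirely inside one fibre of $\theta_s$. But the oriented flips inside a fibre form, by construction, a single directed chain $F_0\to F_1\to\dots\to F_{k+1}$, and any directed path in $\Gamma_Q$ between two elements of this fibre must stay inside it, since an escaping path would contain an out-fibre flip and therefore project to a non-constant closed walk in $\Gamma_{Q^{\setminus s}}$; so its length equals the distance along the chain. As $Q'\longrightarrow Q''$ is an edge, that distance is $1$, forcing $\ell=1$, a contradiction. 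If $Q'\longrightarrow Q''$ is of type \textbf{(outFib)}, then $\theta_s(Q')\longrightarrow\theta_s(Q'')$ is an edge of $\Gamma_{Q^{\setminus s}}$ and $\theta_s(P)$ is a directed path of length $a$ with the same endpoints; by the induction hypothesis $\Gamma_{Q^{\setminus s}}$ has no redundant edge, so $a\leq 1$, while $a\geq 1$ since $\theta_s(Q')\neq\theta_s(Q'')$; hence $a=1$ and $b=\ell-1\geq 1$. So $P$ reads $Q'\overset{\textbf{(outFib)}}{\longrightarrow}P_1\overset{\textbf{(inFib)}}{\longrightarrow}\dots\longrightarrow Q''$ with $P_1\neq Q''$, and the two out-fibre flips $Q'\longrightarrow Q''$ and $Q'\longrightarrow P_1$ both project to the same edge $\theta_s(Q')\longrightarrow\theta_s(Q'')$ of $\Gamma_{Q^{\setminus s}}$ (using $\theta_s(P_1)=\theta_s(Q'')$).

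The final step, which I expect to be the main obstacle, is to show that an edge of $\Gamma_{Q^{\setminus s}}$ lifts to at most one out-fibre flip out of a fixed $Q'$: this forces $P_1=Q''$, producing a repeated vertex in $P$ and contradicting acyclicity. If the inner edge flipped downstairs is not incident to the collapsed vertex, its lift is unique because $\theta_s$ is injective on inner edges disjoint from the collapsed boundary arc. If it is incident to the collapsed vertex, a second lift would force $Q'$ to contain both of the two parallel inner edges that $\theta_s$ identifies, hence to contain the degenerate square supported on the collapsed arc; but then a short inspection of the hexagon formed in $Q'$ by that square and the square across one of the two parallel edges shows that flipping that edge produces an inner edge still incident to the collapsed vertex after applying $\theta_s$ (or one that is not a legal edge at all), so it cannot realize the edge downstairs, whose new inner edge avoids the collapsed vertex. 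This local check near the leaf square is the only delicate point; everything else is bookkeeping with $\theta_s$, acyclicity, and the chain structure of the fibres.
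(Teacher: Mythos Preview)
Your argument follows the paper's proof almost exactly: induction on $n$, split into the two cases \textbf{(inFib)}/\textbf{(outFib)} for the single flip $Q'\to Q''$, normalise the hypothetical longer path via Proposition~\ref{tri_out_in}, and use the chain structure of the fibres together with the induction hypothesis on $\Gamma_{Q^{\setminus s}}$. The \textbf{(inFib)} case is handled the same way in both. In the \textbf{(outFib)} case, the paper is terser than you are: after normalising, it simply asserts that the first out-fibre step ``must be the flip $Q'\to Q''$, otherwise one cannot reach $Q''$, by induction hypothesis'', and then observes that any further in-fibre steps would overshoot $Q''$ downward in its fibre. You are right that what is really being used here is a uniqueness statement for out-fibre lifts of a given edge of $\Gamma_{Q^{\setminus s}}$ at a given $Q'$, and that this deserves justification.

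However, your proposed justification has a gap. In the incident case you argue that a second lift forces $Q'$ to contain both parallel edges $\{i,v\}$ and $\{i+2,v\}$, and then claim that flipping one of them produces a new edge which, after $\theta_s$, is still incident to the collapsed vertex, whereas the downstairs flip's new edge ``avoids the collapsed vertex''. The last clause is unsupported and in fact false in this configuration: for \emph{either} parallel edge, the flipping hexagon contains the small square on $i,i+1,i+2,v$, and the only $Q$-compatible replacement diagonal again has an endpoint among $i,i+1,i+2$; so the downstairs new edge is incident to the collapsed vertex for both candidate lifts, and your contradiction does not arise. The cleaner way to close the argument is to observe that any such hexagon contains all three vertices $i,i+1,i+2$ and therefore collapses to a quadrilateral, not a hexagon; by the dichotomy of Proposition~\ref{dichotomie} the flip is then of type \textbf{(inFib)}, not \textbf{(outFib)}. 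Hence if both parallel edges are present, neither can be removed by an out-fibre flip, so at most one lift exists and $P_1=Q''$ as required.
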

\begin{proof}
  The proof is by induction on $n$. One has to show that the graph is
  transitively reduced.

  This means that for a single flip $Q' \longrightarrow Q''$,
  there should be no longer sequence of flips starting at $Q'$ and ending at
  $Q''$. Here longer means having at least $2$ steps.
  
  Assume first that the flip $Q' \longrightarrow Q''$ is inside a
  fibre of $\theta_s$. Then any flip exiting this fibre would never
  come back to this fibre by induction. So it is only possible to look
  for the longer sequence inside the fibre. But there cannot be any
  longer path between two consecutive elements of a total order.

  Assume now that the flip $Q' \longrightarrow Q''$ induces a flip
  $\theta_s(Q') \longrightarrow \theta_s(Q'')$. Assume moreover that there is
  some longer sequence of flips from
  $Q' \longrightarrow Q''$. By proposition
  \ref{tri_out_in}, one can suppose that it starts with out-fibre
  flips and ends with in-fibre flips.

  If the longer sequence starts with an out-fibre flip, this must be
  the flip $Q' \longrightarrow Q''$, otherwise one cannot reach $Q''$,
  by induction hypothesis. It must then stop, otherwise it will just
  stop at some lower point in the fibre. So in fact, it was not
  longer, which is absurd.

  If the longer sequence starts with an in-fibre flip, it can only
  contain in-fibre flips, and therefore can never reach $Q''$ which is
  in a different fibre. This is a contradiction.
\end{proof}

By convention, the partial order will be decreasing along the edges of
the oriented graph $\Gamma_Q$. 

\begin{proposition}
  The map $\theta_s$ is a morphism of posets.
\end{proposition}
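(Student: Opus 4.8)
The plan is to show that $\theta_s$ is order-preserving, i.e. that whenever $Q' \longrightarrow Q''$ is a single oriented flip of $Q$-compatible quadrangulations, then either $\theta_s(Q') = \theta_s(Q'')$ or $\theta_s(Q')$ is strictly above $\theta_s(Q'')$ in the partial order on $Q^{\setminus s}$-compatible quadrangulations. Since the partial order is generated by the covering relations (the oriented flips), it suffices to check this on covering relations, and that is exactly what Proposition \ref{dichotomie} delivers: a flip $Q' \longrightarrow Q''$ is either of type \textbf{(inFib)}, in which case $\theta_s(Q') = \theta_s(Q'')$, or of type \textbf{(outFib)}, in which case $\theta_s(Q') \longrightarrow \theta_s(Q'')$ is itself an oriented flip, hence $\theta_s(Q') > \theta_s(Q'')$ in the target poset. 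So the image of a covering relation is either an equality or a covering relation, and in both cases $\theta_s(Q') \geq \theta_s(Q'')$.

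The remaining point is that this local statement on covers propagates to arbitrary comparable pairs. If $R \geq R'$ in the Stokes poset of $Q$, then by definition there is a directed path $R = R_0 \longrightarrow R_1 \longrightarrow \cdots \longrightarrow R_m = R'$ in $\Gamma_Q$. Applying the dichotomy step by step gives $\theta_s(R_0) \geq \theta_s(R_1) \geq \cdots \geq \theta_s(R_m)$ in the poset of $Q^{\setminus s}$-compatible quadrangulations (using transitivity of that order, which we may invoke since we have already shown $\Gamma_{Q^{\setminus s}}$ is the Hasse diagram of a poset by the earlier induction). Hence $\theta_s(R) \geq \theta_s(R')$, which is precisely the assertion that $\theta_s$ is a morphism of posets.

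I do not expect any serious obstacle here: the proposition is essentially a corollary of Proposition \ref{dichotomie}, repackaged. The one thing worth stating carefully is why it is enough to verify the condition on Hasse edges rather than on all covering relations of the image — but this is automatic, because every directed edge of $\Gamma_Q$ is a cover and the order on the domain is by definition the transitive closure of these edges. One subtlety to mention explicitly, to be safe, is that in the \textbf{(inFib)} case the two elements genuinely map to the \emph{same} quadrangulation (not merely comparable ones), so the induced relation on images is reflexive rather than strict; this is consistent with $\theta_s$ being a poset morphism and causes no trouble.

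\begin{proof}
  Since the partial order on $Q$-compatible quadrangulations is the
  transitive closure of the oriented flips, it suffices to show that
  for a single flip $Q' \longrightarrow Q''$ one has
  $\theta_s(Q') \geq \theta_s(Q'')$ in the poset of
  $Q^{\setminus s}$-compatible quadrangulations. This is exactly
  Proposition \ref{dichotomie}: in case \textbf{(inFib)} one has
  $\theta_s(Q') = \theta_s(Q'')$, and in case \textbf{(outFib)} one
  has an oriented flip $\theta_s(Q') \longrightarrow \theta_s(Q'')$,
  hence $\theta_s(Q') > \theta_s(Q'')$. Given now an arbitrary pair
  $R \geq R'$, choose a directed path
  $R = R_0 \longrightarrow \dots \longrightarrow R_m = R'$ in
  $\Gamma_Q$; applying the above to each step and using transitivity
  of the order on $Q^{\setminus s}$-compatible quadrangulations gives
  $\theta_s(R) = \theta_s(R_0) \geq \dots \geq \theta_s(R_m) =
  \theta_s(R')$.
\end{proof}
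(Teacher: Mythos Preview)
Your proof is correct and follows essentially the same approach as the paper, which simply says the statement is clear from the description of flips in Proposition \ref{dichotomie}. You have just spelled out the details more explicitly: check the order-preserving property on covering relations via the \textbf{(inFib)}/\textbf{(outFib)} dichotomy and then extend by transitivity.
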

\begin{proof}
  This is clear by the description of flips using $\theta_s$ in proposition \ref{dichotomie}.
\end{proof}

%

Recall the definition of two special $Q$-compatible quadrangulations $Q$ and $\tau(Q)$ at the end of \autoref{compat}.

\begin{proposition}
  \label{unique_minmax}
  The unique $Q$-compatible quadrangulation with only out-flips is
  $Q$. The unique $Q$-compatible quadrangulation with only in-flips is
  $\tau(Q)$.
\end{proposition}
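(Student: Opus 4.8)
The plan is to prove the two statements by induction on the size $n$, using the inductive machinery built around the reduction map $\theta_s$. The base case $n=1$ is trivial since there are no flips, and both $Q$ and $\tau(Q)$ coincide with the unique quadrangulation. For $n\geq 2$, choose a leaf square $s$ bounded by a boundary edge $e$ and consider $\theta_s\colon \gamma_Q \to \gamma_{Q^{\setminus s}}$, whose fibres are totally ordered chains of length $k+1$ (where $k$ is the number of inner edges starting at the collapsed vertex of the image).

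First I would handle the characterisation of the unique source, i.e.\ the quadrangulation with only out-flips. By \autoref{dichotomie}, the $n-1$ flips at a quadrangulation $Q'$ split into the (at most one) in-fibre flip, whose orientation is fixed (always a down-step inside the fibre), and the out-fibre flips, which are exactly the flips of $\theta_s(Q')$ inside $Q^{\setminus s}$. For $Q'$ to have only out-flips, it must first be the \emph{top} of its $\theta_s$-fibre (so that the in-fibre flip, if any, is an out-flip rather than an in-flip — and one checks that at the top of the fibre this flip indeed points outward or does not exist), and moreover $\theta_s(Q')$ must itself have only out-flips in $\gamma_{Q^{\setminus s}}$. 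By the induction hypothesis the latter forces $\theta_s(Q') = Q^{\setminus s}$. There is a unique top element in each fibre, so there is a unique candidate over $Q^{\setminus s}$; one then verifies this candidate is exactly $Q$ (seen in the red polygon). Dually, for the unique sink with only in-flips: $Q'$ must be the \emph{bottom} of its fibre (so the in-fibre down-step is an in-flip), and $\theta_s(Q')$ must have only in-flips in $\gamma_{Q^{\setminus s}}$, hence equals $\tau(Q^{\setminus s})$ by induction; the unique bottom element over $\tau(Q^{\setminus s})$ is checked to be $\tau(Q)$.

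The main obstacle, and the step requiring genuine care rather than bookkeeping, is the compatibility of the reduction with the two distinguished quadrangulations: one must check that $\theta_s(Q) = Q^{\setminus s}$ and $\theta_s(\tau(Q)) = \tau(Q^{\setminus s})$, and that $Q$ is the top of its fibre while $\tau(Q)$ is the bottom of its fibre. This is a direct geometric computation with the collapsing map and the rotation $\tau$, tracking how the $k+1$ inner edges at the leaf distribute between the two white vertices $i$ and $i+2$: for the self-compatible copy of $Q$ all these edges should be seen to start at one end (say $i+2$), placing it at the extreme top of the fibre; for $\tau(Q)$, obtained by positive rotation, they all start at the other end, placing it at the extreme bottom. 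Once this geometric picture is pinned down, the induction closes. I would also remark that the orientation convention (down along $\Gamma_Q$) means the source is the maximum and the sink the minimum of the Stokes poset, so this proposition identifies $\hat 1 = Q$ and $\hat 0 = \tau(Q)$.
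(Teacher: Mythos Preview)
Your route through the reduction map $\theta_s$ is genuinely different from the paper's. After noting from the definition that $Q$ has only out-flips and $\tau(Q)$ only in-flips, the paper does not invoke $\theta_s$ at all: it picks a leaf square $s=(i,j,k,\ell)$ of the backdrop $Q$, looks at the red square of $Q'$ containing the boundary edges $\hat{j},\hat{k}$, and argues geometrically. Either that square also contains $\hat{\ell}$, in which case one peels it off and recurses on a smaller picture; or there is an inner edge of $Q'$ at the corner between $\hat{k}$ and $\hat{\ell}$, whose surrounding hexagon is checked by hand to give an in-flip, contradicting the hypothesis. This avoids the fibre bookkeeping entirely.

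Your approach can be made to work, but as written it contains a real slip. You assert that the $n-1$ flips at any $Q'$ split into ``the (at most one) in-fibre flip'' and the out-fibre flips, the latter being ``exactly the flips of $\theta_s(Q')$''. This is false at an interior element of a fibre: there the incoming in-fibre flip and the outgoing in-fibre flip sit at two \emph{distinct} inner edges of $Q'$, so there are two in-fibre flips and only $n-3$ out-fibre ones, which cannot account for all $n-2$ flips of $\theta_s(Q')$. The repair is to first use the hypothesis to place $Q'$ at the top of its fibre (otherwise the in-fibre in-flip contradicts ``only out-flips''), and \emph{then} note that at the top there is exactly one in-fibre flip; the remaining $n-2$ out-fibre flips must now biject with the $n-2$ flips of $\theta_s(Q')$. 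That bijection still deserves a sentence: at the top all $k+1$ relevant edges start at $i$, so the collapse is injective on inner edges except for the single in-fibre edge $i\to i+3$, which becomes a boundary edge. With this correction your induction closes, and the final geometric checks you list (that $Q$ is the top over $Q^{\setminus s}$ and $\tau(Q)$ the bottom over $\tau(Q^{\setminus s})$) are straightforward.
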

\begin{proof}
  Let us first note that $Q$ has only out-flips. This follows from its
  definition by a small negative rotation. Similarly, $\tau(Q)$ has
  only in-flips, because it is defined by a small positive rotation.

  It remains to show the converse statements. Let us prove that for
  $Q$ and out-flips, the case of $\tau(Q)$ and in-flips being similar.
  The proof is illustrated in \autoref{leaf_induction}.

  Let $Q'$ be a $Q$-compatible quadrangulation with only out-flips.

  Let $s=(i,j,k,\ell)$ be a square of $Q$ that is a leaf in the tree
  structure of $Q$, where the boundary edges are $i - j, j - k$ and
  $k - \ell$. One can assume without restriction that $j$ is a black
  vertex and $k$ a white vertex.

  Let $\hat{j}$, $\hat{k}, \hat{\ell}$ be the boundary edges of $Q'$
  corresponding in the obvious way to the vertices $j, k, \ell$. If
  the unique square of $Q'$ containing $\hat{j}$ and $\hat{k}$ also
  contains $\hat{\ell}$, then one can remove the boundary edges $i - j,
  j - k$ and $k - \ell$ from $Q$ and the boundary edges $\hat{j}$,
  $\hat{k}, \hat{l}$ from $Q'$ and proceed by induction. It then
  follows that $Q'$ is equal to $Q$.

  Otherwise, there are inner edges of $Q'$ starting at the common
  vertex of $\hat{k}$ and $\hat{\ell}$. One of them is bounding the
  square of $Q'$ containing $\hat{j}$ and $\hat{k}$. Let $H$ be the
  hexagon containing this edge in its interior. One can check that
  the inner edge of $H$ is an in-flip for $Q'$, which is absurd.

  \begin{figure}[ht]
    \begin{center}
      \def\svgwidth{0.30\textwidth}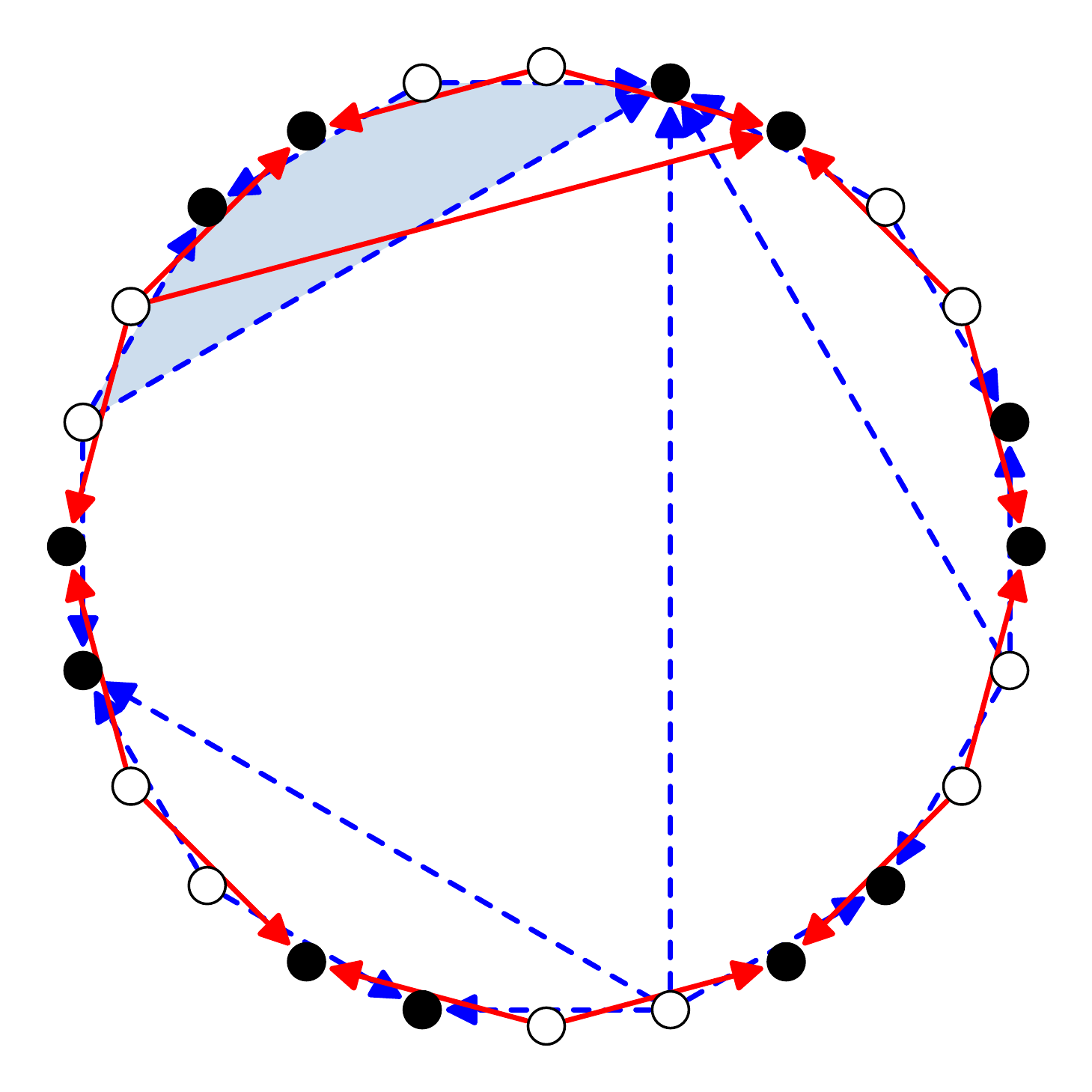
      \def\svgwidth{0.33\textwidth}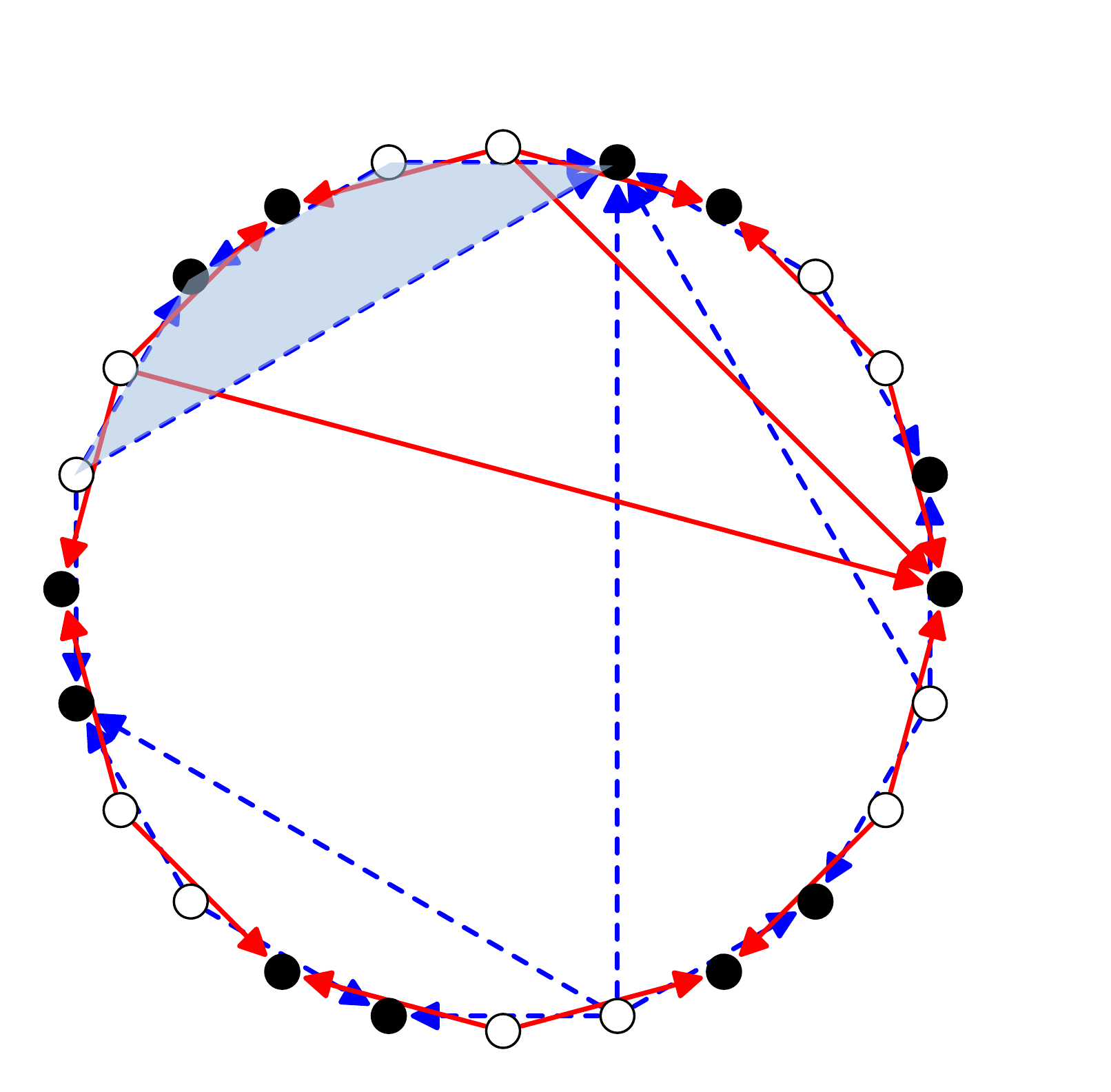
      \caption{Induction using a leaf of $Q$}
      \label{leaf_induction}
    \end{center}
  \end{figure}
\end{proof}

\begin{proposition}
  The oriented flip graph $\Gamma_Q$ is the Hasse diagram of a
  connected poset. The undirected flip graph $\gamma_Q$ is connected.
\end{proposition}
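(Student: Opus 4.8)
The plan is to obtain connectedness as an immediate consequence of Proposition~\ref{unique_minmax}, without running a new induction, by exploiting the elementary fact that a finite poset with a unique maximal element has a connected Hasse diagram.

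First I would pin down the maximal elements of the poset underlying $\Gamma_Q$. Since, by our convention, the partial order strictly decreases along every edge of $\Gamma_Q$, a maximal element $Q'$ can admit no in-flip: an in-flip $Q'' \longrightarrow Q'$ would force $Q'' > Q'$, contradicting maximality. So a maximal element has only out-flips, and Proposition~\ref{unique_minmax} then identifies it with $Q$. Hence $Q$ is the unique maximal element of the poset (and, dually, $\tau(Q)$ is the unique minimal element, reflecting the $\tau$-symmetry).

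Next I would use finiteness: there are only finitely many $Q$-compatible quadrangulations, so the poset is finite, and every element lies below some maximal element, which by the previous step must be $Q$. Thus $Q' \le Q$ for every $Q$-compatible $Q'$, and a saturated chain from $Q'$ up to $Q$ is, step by step, a path along edges of the Hasse diagram $\Gamma_Q$. Therefore every vertex of $\Gamma_Q$ is joined to $Q$, so $\Gamma_Q$ is connected; equivalently the poset is connected and the underlying undirected flip graph $\gamma_Q$ is connected.

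I do not expect a genuine obstacle here: the real content has already been established in Proposition~\ref{unique_minmax} through the leaf-removal induction. The only points needing a little care are that the poset is finite (so that maximal elements exist and dominate every element) and that the orientation convention makes ``maximal element'' synonymous with ``$Q$-compatible quadrangulation with only out-flips''. As an alternative one could instead induct on $n$: the fibres of $\theta_s$ are connected because they are chains under flips, and $\gamma_{Q^{\setminus s}}$ is connected by the induction hypothesis, which suffices to connect adjacent fibres; but the argument above is shorter.
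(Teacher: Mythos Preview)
Your proposal is correct and follows essentially the same route as the paper: the paper's proof is a single sentence asserting that connectedness follows from the existence of a unique minimal element $\tau(Q)$ (and a unique maximal element $Q$) established in Proposition~\ref{unique_minmax}. You have simply unpacked this inference, making explicit why a maximal element must have only out-flips and why finiteness then forces every vertex to lie on a Hasse path to $Q$; this added care is appropriate but not a different argument.
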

\begin{proof}
  This follows from the existence of a unique minimal element $\tau(Q)$
  (and a unique maximal element $Q$) in the poset, proved in
  proposition \ref{unique_minmax}.
\end{proof}

From now on, $\Gamma_Q$ will denote both the poset and its Hasse
diagram. Note that these posets are not graded, as one can see already
with the pentagon that one can obtain starting with some of the
quadrangulations of an octagon.

\subsection{Another description ?}

\label{no_twice}

Let us now consider another potential way to define the directed graph
$\Gamma_Q$. This is only conjectural for the moment.

Fix an initial quadrangulation $Q$. Consider the directed graph $D$
with vertices all quadrangulations, and edges defined by
counter-clockwise rotation inside hexagons (where the edge extremities
are moved to adjacent vertices). This large directed graph $D$
obviously contains $\Gamma_Q$. 

One would like to find another way to recover $\Gamma_Q$, by keeping
only some vertices and edges of $D$, without having to use the
compatibility relation.

The idea is that the set of vertices of $\Gamma_Q$ should have the
following equivalent description. Let us say that a path in $D$ is
\textit{repetition-free} if no quadrangulation occurs twice. A
quadrangulation $Q'$ is said to be \textit{reachable from $Q$} if no
repetition-free path in $D$ from $Q$ to $Q'$ contains two consecutive
flips inside the same hexagon. Then quadrangulations reachable from $Q$
should be exactly the same as elements of $\Gamma_Q$.

If this holds, then one can build the set of reachable
quadrangulations step by step, using for example the following
algorithm.

\begin{itemize}
  \item Start from the singleton set $S = \{Q\}$.
  \item Assume that some set $S$ of reachable quadrangulations have been
    found. Then either 
    \begin{enumerate}
    \item there exists a quadrangulation $Q'$ not in $S$, obtained
      from an element of $S$ by a flip, reachable, and such that all
      quadrangulations on all repetition-free paths in $D$ from $Q$ to $Q'$
      are already in $S$,
    \item or there is no such $Q'$ and the algorithm stops.
      \end{enumerate}
\end{itemize}

To see what this algorithm does inside $\Gamma_Q$, let us fix a linear
extension of $\Gamma_Q$, starting with $Q$ and ending with
$\tau_Q$. Then the algorithm would succeed by adding the elements of $\Gamma_Q$
according to this linear extension.

This would allow to define the graph $\Gamma_Q$ without using the
compatibility condition introduced by Baryshnikov, at the prize of a
somewhat greater complexity. This could be useful in more general
contexts where no analogue of compatibility is known.

This description of $\Gamma_Q$ would be summarised by the following
sentence: do not rotate twice in the same hexagon.

\section{Examples and properties of Stokes posets}

\label{poset_property_section}

\subsection{Catalan and other examples}

\label{expl_cata}

Let us first consider the special case of the quadrangulations $C_{n}$
with inner edges
\begin{equation}
  (0,3),(0,5),\dots,(0,2n-1)
\end{equation}
inside the polygon with $2n+2$ vertices. They have $n$ squares. An
example is depicted in \autoref{catalan}.
\begin{figure}[ht]
  \begin{center}
    \includegraphics[height=2cm]{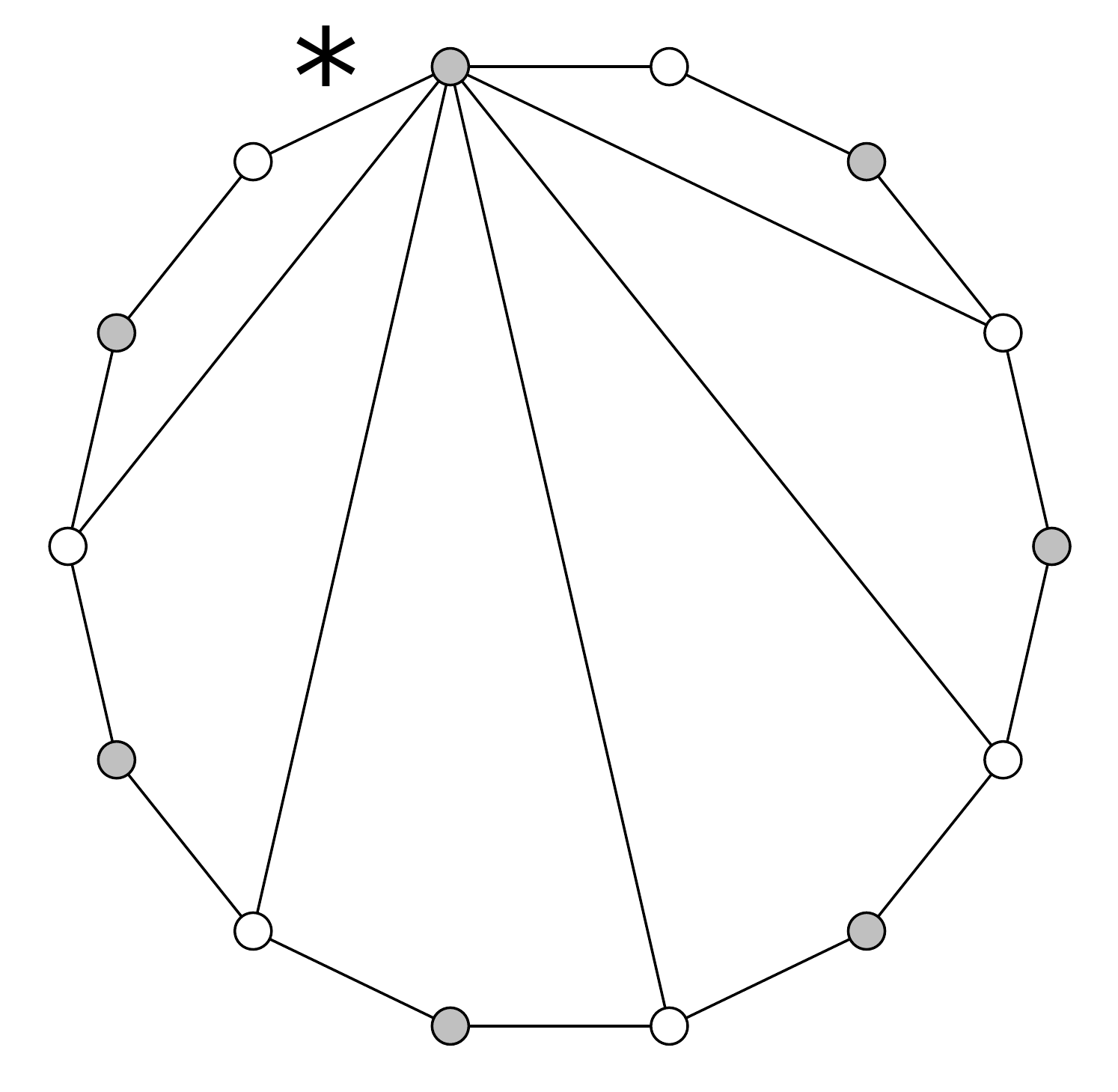}
    \caption{The quadrangulation $C_{6}$ of the Catalan family}
    \label{catalan}
  \end{center}
\end{figure}

Let us choose the border edge of the red polygon corresponding to the
vertex $0$ of the blue polygon as a root. Recall that choosing such a
root allows to identify quadrangulations with rooted ternary trees, by planar
duality. The root is marked by $\ast$ in the figure.

\begin{proposition}
  The $C_n$-compatible quadrangulations are in bijection with rooted
  ternary trees with no middle branch, and therefore also with rooted
  binary trees. The flips correspond to the left-to-right rotation
  moves of rooted binary trees. In-flips correspond to right-branches
  and out-flips to left branches.
\end{proposition}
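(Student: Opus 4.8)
The plan is to analyze the structure of $C_n$-compatibility directly and show it forces the "no middle branch" condition, then match the combinatorics.

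\medskip

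First I would work out exactly which inner edges are $C_n$-compatible. The backdrop $C_n$ has inner edges $(0,3),(0,5),\dots,(0,2n-1)$, all emanating from vertex $0$. By the remark in \autoref{compat}, an inner edge $e$ of the red polygon is $C_n$-compatible if and only if it is compatible with each of these blue edges. Since the blue edges form a "fan" at vertex $0$ and (after the small clockwise rotation placing them inside the red polygon) vertex $0$ of the blue polygon sits between two consecutive red vertices, the compatibility condition — that the ordered pair (red edge, blue edge) is positively oriented, with all edges oriented from white to black — becomes a clean combinatorial condition on the red edge: it says the red inner edge must not "separate" the image of vertex $0$ from certain arcs, equivalently that the red edge, drawn in the red polygon, may not cross the blue fan in the wrong rotational sense. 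Working this out, one finds that a $C_n$-compatible red quadrangulation is precisely one in which every square has a distinguished "middle" region that is empty — i.e., when we root at the red edge facing blue-vertex $0$ and pass to the dual ternary tree, the middle child of every internal node is a leaf. This is the core geometric computation and I expect it to be \textbf{the main obstacle}: one must carefully track the white/black colouring, the orientations, and the effect of the rotation by $\frac{2\pi}{4n+4}$ to turn Baryshnikov's compatibility into the transparent statement "no middle branch".

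\medskip

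Once that identification is in hand, the rest is bookkeeping. Ternary trees with $2n+1$ leaves whose every internal node has a leaf as middle child are in obvious bijection with rooted binary trees with $n$ internal nodes: delete the middle leaves, keeping only left and right children. This is a standard bijection (binary trees with $n$ nodes are counted by the Catalan number, consistent with the known vertex count of the Stokes polytope of $C_n$).

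\medskip

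Finally I would match the flips. A flip occurs inside a red hexagon $H$ of six $C_n$-compatible edges; by Lemma \ref{hexa_flip} exactly two of the three diagonals are $C_n$-compatible, and switching between them is the flip. On the ternary-tree side, such a hexagon corresponds to two adjacent internal nodes (a parent and one of its non-middle children), each carrying its empty middle child, and the flip is exactly the rotation exchanging which of the two sits above — i.e.\ the left-to-right rotation of binary trees once the middle leaves are suppressed. It remains to check the orientation claim. By the definition of oriented flips via the long inner edge $j$ of $Q=C_n$ (here one of the fan edges $(0,2m-1)$ passing through $H$), the \textbf{(id)} case — starting vertex of the hexagon diagonal adjacent to the starting vertex of $j$, i.e.\ to vertex $0$'s side — is the one where, after passing to the binary tree, the rotation moves along a \emph{left} branch; the \textbf{(op)} case is the \emph{right} branch. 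Since by convention the order decreases along oriented flips (out-flips) and each inner edge is either an in-flip or an out-flip, this says out-flips $\leftrightarrow$ left branches and in-flips $\leftrightarrow$ right branches, and the resulting poset is the Tamari lattice with its usual Hasse diagram. Here one should also double-check consistency with Proposition \ref{unique_minmax}: $C_n$ itself, the maximum, must be the left comb (all left branches, only out-flips) and $\tau(C_n)$, the minimum, the right comb (all right branches, only in-flips), which indeed matches the small negative/positive rotations defining these two quadrangulations.
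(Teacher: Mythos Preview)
Your proposal is a valid approach but takes a genuinely different route from the paper. The paper's own proof is an inductive propagation argument: it verifies that the starting quadrangulation $C_n$ (the unique maximum of $\Gamma_{C_n}$) is sent to the left comb, and then argues that the correspondence is preserved under flips; connectivity of the flip graph, already established in Proposition~\ref{unique_minmax} and the results following it, then forces the bijection to hold globally. In other words, the paper never characterises the $C_n$-compatible edges directly---it checks a base case and a flip-invariance step, leaving the latter to the reader.

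Your approach instead attacks the compatibility condition head-on, aiming to prove directly that the $C_n$-compatible inner edges are precisely those whose dual ternary tree has no middle branch. This is more self-contained (it does not rely on connectivity of $\gamma_{C_n}$) and would give a more explicit description of the compatible edges, but, as you correctly flag, the geometric computation with the fan of blue edges and the colouring/rotation conventions is where all the work lies, and you have not actually carried it out. The paper's route sidesteps that computation entirely at the cost of invoking connectivity. Either way the identification of flips with rotations and of in/out-flips with right/left branches is the same local check inside a hexagon, and your consistency check against Proposition~\ref{unique_minmax} is a sensible sanity test that the paper does not make explicit.
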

\begin{proof}
  This description holds at the starting quadrangulation, which has
  only left branches. It remains to show that this correspondence is
  preserved under flips. Details are left to the reader.
\end{proof}

Therefore, in this case, the number of $Q$-compatible quadrangulations
is the Catalan number
\begin{equation}
  \frac{1}{n+1}\binom{2n}{n},
\end{equation}
and the poset $\Gamma_Q$ is the Tamari lattice (see for example \cite{tamfest} for its usual definition using rotation of binary trees).

More generally, for \textit{ribbon quadrangulations} (defined as
quadrangulations where no square has neighbour squares on opposite
sides), the posets $\Gamma_Q$ are expected to match with the Cambrian
lattices of type $\TA$.

\medskip

\autoref{guy_12} illustrates the directed flip graph $\Gamma_Q$ for a
quadrangulation with $4$ squares, having $12$ compatible
quadrangulations. This is the simplest example of a Stokes poset that
is really new, as the quadrangulation is not a ribbon one.
\begin{figure}[ht]
  \begin{center}
    \tikzset{
  on each segment/.style={
    decorate,
    decoration={
      show path construction,
      moveto code={},
      lineto code={
        \path [#1]
        (\tikzinputsegmentfirst) -- (\tikzinputsegmentlast);
      },
      curveto code={
        \path [#1] (\tikzinputsegmentfirst)
        .. controls
        (\tikzinputsegmentsupporta) and (\tikzinputsegmentsupportb)
        ..
        (\tikzinputsegmentlast);
      },
      closepath code={
        \path [#1]
        (\tikzinputsegmentfirst) -- (\tikzinputsegmentlast);
      },
    },
  },
  mid arrow/.style={postaction={decorate,decoration={
        markings,
        mark=at position .5 with {\arrow[#1]{triangle 45}}
      }}},
}

\begin{tikzpicture}%
	[x={(0.488459cm, 0.782003cm)}, 
	y= {(-0.397415cm, 0.594354cm)}, 
	z= {(-0.776832cm, 0.187649cm)}, 
	scale=1.500000,
	back/.style={loosely dotted, thin},
	edge/.style={color=blue!95!black, thick,postaction={on each segment={mid arrow=black}}},
	facet/.style={fill=blue!95!white,fill opacity=0.100000},
	vertex/.style={inner sep=1pt,circle,draw=green!25!black,fill=green!75!black,thick,anchor=base}
        ]
%
%
\coordinate (-2.00, 1.00, -2.00) at (-2.00, 1.00, -2.00);
\coordinate (0.000, 3.00, 0.000) at (0.000, 3.00, 0.000);
\coordinate (-2.00, 1.00, 0.000) at (-2.00, 1.00, 0.000);
\coordinate (-2.00, 2.00, -3.00) at (-2.00, 2.00, -3.00);
\coordinate (0.000, 3.00, -3.00) at (0.000, 3.00, -3.00);
\coordinate (0.000, 2.00, -3.00) at (0.000, 2.00, -3.00);
\coordinate (0.000, 0.000, 0.000) at (0.000, 0.000, 0.000);
\coordinate (-2.00, 3.00, -3.00) at (-2.00, 3.00, -3.00);
\coordinate (0.000, 0.000, -1.00) at (0.000, 0.000, -1.00);
\coordinate (-1.00, 0.000, 0.000) at (-1.00, 0.000, 0.000);
\coordinate (-2.00, 3.00, 0.000) at (-2.00, 3.00, 0.000);
\coordinate (-1.00, 0.000, -1.00) at (-1.00, 0.000, -1.00);
\draw[edge,back] (-1.00, 0.000, -1.00) -- (-2.00, 1.00, -2.00);
\draw[edge,back] (0.000, 3.00, 0.000) -- (0.000, 0.000, 0.000);
\draw[edge,back] (-1.00, 0.000, 0.000) -- (-2.00, 1.00, 0.000);
\draw[edge,back] (0.000, 2.00, -3.00) -- (0.000, 0.000, -1.00);
\draw[edge,back] (0.000, 0.000, 0.000) -- (0.000, 0.000, -1.00);
\draw[edge,back] (0.000, 0.000, 0.000) -- (-1.00, 0.000, 0.000);
\draw[edge,back] (0.000, 0.000, -1.00) -- (-1.00, 0.000, -1.00);
\draw[edge,back] (-1.00, 0.000, 0.000) -- (-1.00, 0.000, -1.00);
\node[vertex] at (0.000, 0.000, 0.000)     {};
\node[vertex] at (0.000, 0.000, -1.00)     {};
\node[vertex] at (-1.00, 0.000, 0.000)     {};
\node[vertex] at (-1.00, 0.000, -1.00)     {};
\fill[facet] (-2.00, 3.00, 0.000) -- (0.000, 3.00, 0.000) -- (0.000, 3.00, -3.00) -- (-2.00, 3.00, -3.00) -- cycle {};
\fill[facet] (-2.00, 3.00, 0.000) -- (-2.00, 1.00, 0.000) -- (-2.00, 1.00, -2.00) -- (-2.00, 2.00, -3.00) -- (-2.00, 3.00, -3.00) -- cycle {};
\fill[facet] (0.000, 3.00, -3.00) -- (-2.00, 3.00, -3.00) -- (-2.00, 2.00, -3.00) -- (0.000, 2.00, -3.00) -- cycle {};
\draw[edge] (-2.00, 1.00, 0.00) -- (-2.00, 1.00, -2.000);
\draw[edge] (-2.00, 2.00, -3.00) -- (-2.00, 1.00, -2.00);
\draw[edge] (0.000, 3.00, 0.000) -- (0.000, 3.00, -3.00);
\draw[edge] (0.000, 3.00, 0.000) -- (-2.00, 3.00, 0.000);
\draw[edge] (-2.00, 3.00, 0.000) -- (-2.00, 1.00, 0.000);
\draw[edge] (0.00, 2.00, -3.00) -- (-2.000, 2.00, -3.00);
\draw[edge] (-2.00, 3.00, -3.00) -- (-2.00, 2.00, -3.00);
\draw[edge] (0.000, 3.00, -3.00) -- (0.000, 2.00, -3.00);
\draw[edge] (0.000, 3.00, -3.00) -- (-2.00, 3.00, -3.00);
\draw[edge] (-2.00, 3.00, 0.00) -- (-2.00, 3.00, -3.000);
\node[vertex] at (-2.00, 1.00, -2.00)     {};
\node[vertex] at (0.000, 3.00, 0.000)     {};
\node[vertex] at (-2.00, 1.00, 0.000)     {};
\node[vertex] at (-2.00, 2.00, -3.00)     {};
\node[vertex] at (0.000, 3.00, -3.00)     {};
\node[vertex] at (0.000, 2.00, -3.00)     {};
\node[vertex] at (-2.00, 3.00, -3.00)     {};
\node[vertex] at (-2.00, 3.00, 0.000)     {};
\end{tikzpicture}\includegraphics[height=2cm]{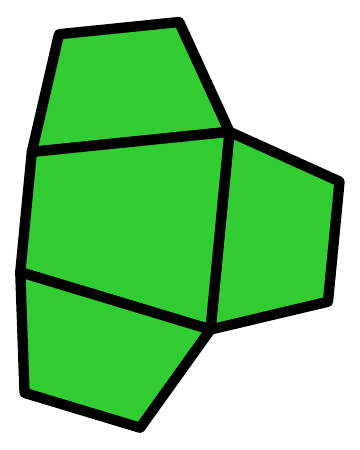}
   \end{center}
   \caption{A quadrangulation $Q$ and the flip graph of its $12$ $Q$-compatible  quadrangulations} 
  \label{guy_12}
\end{figure}

See \autoref{taille5} for the numbers of $Q$-compatible
quadrangulations for some (connected) quadrangulations with up to $6$
squares.
\begin{figure}[ht]
  \begin{center}
    \includegraphics[height=5cm]{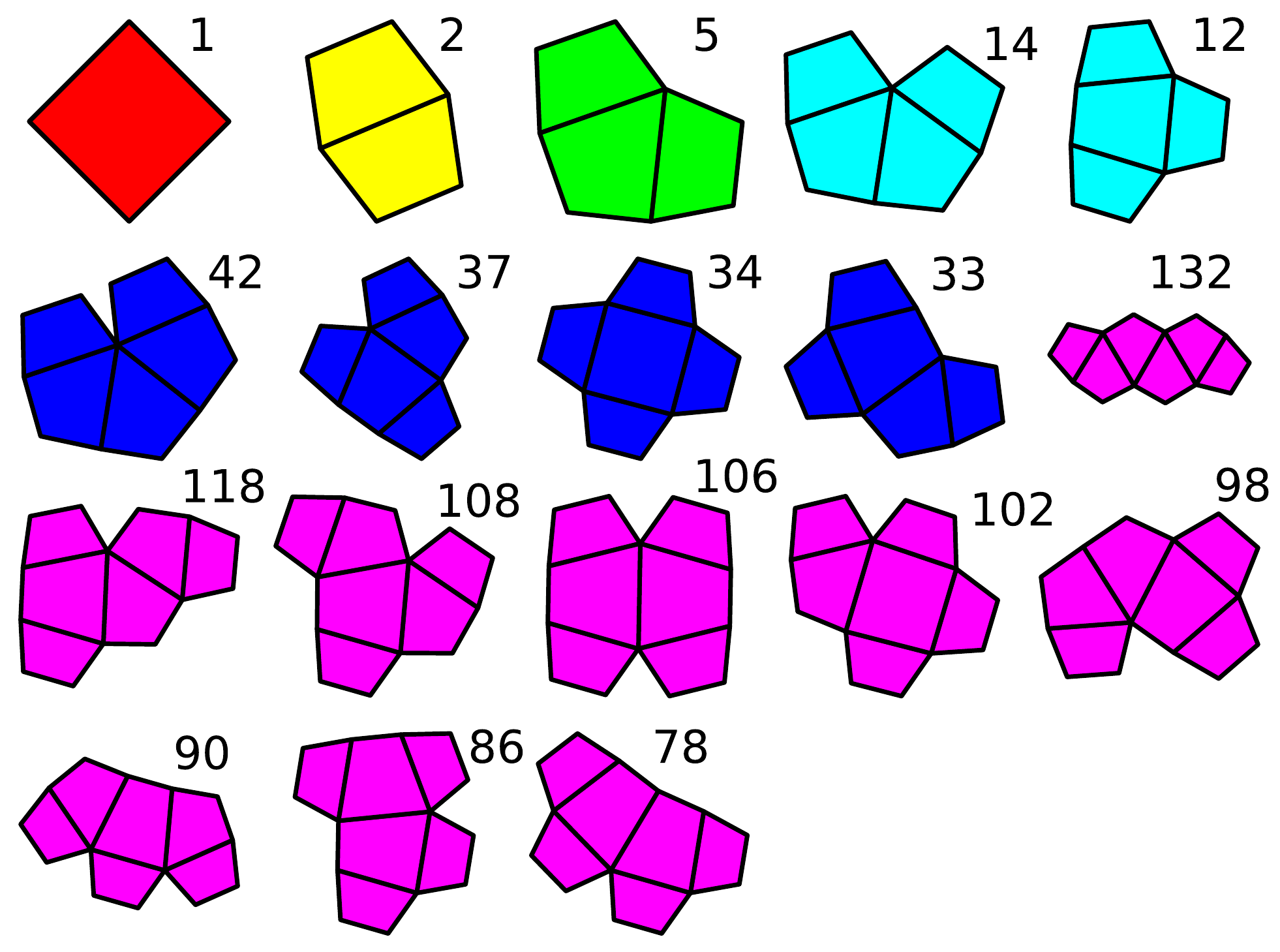}
  \end{center}
  \caption{Quadrangulations with up to $6$ squares and number of compatible quadrangulations.}
   \label{taille5}
\end{figure}

\subsection{Factorisation property}

\label{prop_bridge}

Let us now describe a factorisation property, that allows to restrict
the attention to a special class of quadrangulations.

A square $s$ in a quadrangulation is called a \textit{bridge} if it
has exactly two neighbour squares and these squares are attached to
opposite edges of $s$. A quadrangulation is called \textit{connected}
if it does not contain any bridge. See \autoref{des_ponts} for a
quadrangulation with several bridges.

\begin{figure}[ht]
  \begin{center}
    \includegraphics[height=2cm]{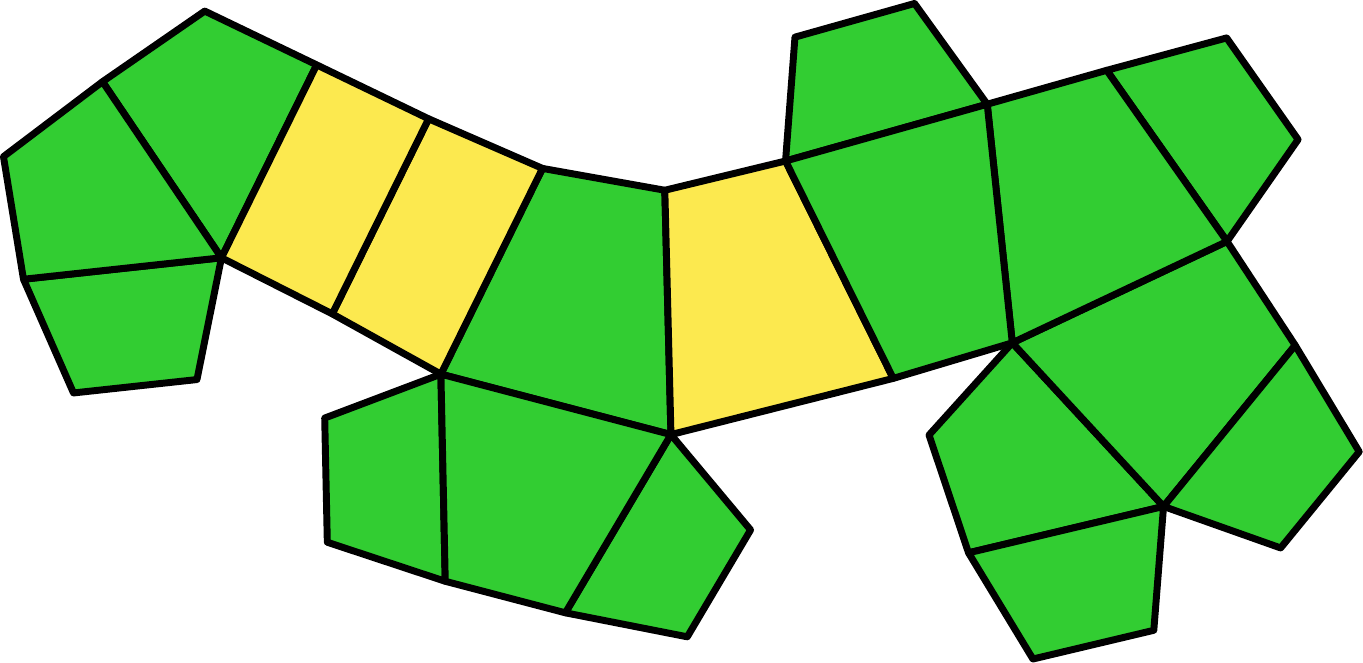}
  \end{center}
  \caption{A quadrangulation with $3$ bridges (coloured yellow).}
   \label{des_ponts}
\end{figure}

Let $Q$ be a quadrangulation. Assume that $s$ is a bridge in $Q$. Let
$Q_1$ (resp. $Q_2$) be the quadrangulation obtained from $Q$ by
removing all squares from one side of $s$ (resp. from the other
side). Equivalently, $Q_1$ and $Q_2$ are obtained by
cutting along one edge of $s$ and keeping the part that contains $s$.

\begin{proposition}
  \label{dg_prod}
  The directed graph $\Gamma_Q$ is the direct product of the directed
  graphs $\Gamma_{Q_1}$ and $\Gamma_{Q_2}$.
\end{proposition}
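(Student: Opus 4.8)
The plan is to exhibit an explicit bijection between the $Q$-compatible quadrangulations and pairs consisting of a $Q_1$-compatible quadrangulation and a $Q_2$-compatible quadrangulation, and then to check that this bijection carries flips (and their orientations) to the product structure on $\Gamma_{Q_1} \times \Gamma_{Q_2}$. First I would set up the geometry: the bridge square $s$ has two opposite inner edges, call them $e_1$ (bordering the $Q_2$-side) and $e_2$ (bordering the $Q_1$-side), so that cutting along $e_1$ gives $Q_1$ and cutting along $e_2$ gives $Q_2$. On the red polygon, the edges $e_1$ and $e_2$ are themselves $Q$-compatible, and I claim every $Q$-compatible quadrangulation $Q'$ must contain (the red versions of) both $e_1$ and $e_2$, or rather the inner edges separating the two sides. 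The key point is that $e_1$ and $e_2$ are long inner edges forced by the bridge: any inner edge of $Q'$ crossing from one side to the other would have to be incompatible with $e_1$ or $e_2$ in $Q$. Hence $Q'$ splits uniquely as $Q' = Q'_1 \cup \{\text{copy of }s\} \cup Q'_2$ where $Q'_1$ lives on the $Q_1$-side and $Q'_2$ on the $Q_2$-side; one checks $Q'_i$ is $Q_i$-compatible, and conversely any such pair glues back to a $Q$-compatible quadrangulation. This gives the bijection on vertex sets.

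Next I would analyze flips. A flip of $Q'$ takes place inside a red hexagon $H$. Since the inner edges $e_1, e_2$ of the bridge copy separate the two sides and are present in every $Q$-compatible quadrangulation, no flippable hexagon $H$ can straddle both sides: $H$ lies entirely on the $Q_1$-side or entirely on the $Q_2$-side (a hexagon using the bridge square's inner edges together with the bridge's boundary edges is not flippable, as those two inner edges are forced). Therefore every flip of $Q'$ is either a flip of $Q'_1$ (leaving $Q'_2$ fixed) or a flip of $Q'_2$ (leaving $Q'_1$ fixed), and conversely; moreover the long inner edge $j$ of $Q$ governing the orientation of such a flip lies on the same side, so the orientation of the flip in $\Gamma_Q$ agrees with its orientation in $\Gamma_{Q_1}$ or $\Gamma_{Q_2}$. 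This is exactly the statement that the underlying directed graph of $\Gamma_Q$ is the direct (Cartesian) product $\Gamma_{Q_1} \times \Gamma_{Q_2}$: vertices are pairs, and there is a directed edge changing one coordinate according to a directed edge there, keeping the other coordinate fixed.

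The main obstacle I expect is the rigidity claim: proving that \emph{every} $Q$-compatible quadrangulation contains the two separating inner edges of the bridge copy, i.e.\ that the two sides never communicate. I would argue this via Lemma~\ref{hexa_flip}-style reasoning: suppose an inner edge $i$ of $Q'$ crosses from the $Q_1$-side to the $Q_2$-side; trace where it must enter and exit and show it is forced to be incompatible with one of the two long inner edges $e_1$, $e_2$ of $Q$ coming out of the bridge (which are themselves long inner edges precisely because $s$ is a bridge with neighbours on opposite sides). Once this separation is established, the remaining verifications — that the pieces are compatible with the respective $Q_i$, that gluing is inverse to splitting, and that flips and orientations transport correctly — are routine local checks of the kind already used in the proofs of Proposition~\ref{dichotomie} and Proposition~\ref{unique_minmax}, and I would only sketch them. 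Since Proposition~\ref{dg_prod} is a statement about the directed graphs (equivalently the Hasse diagrams), no separate argument about the poset structure is needed beyond it, though one remarks as a consequence that $\Gamma_Q \cong \Gamma_{Q_1} \times \Gamma_{Q_2}$ as posets.
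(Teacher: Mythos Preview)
Your overall plan coincides with the paper's: show that no $Q$-compatible red edge can pass from one side of the bridge to the other, deduce the bijection on vertices, and check that flips (with their orientations) act coordinatewise. Two points deserve adjustment.

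First, your framing of the separation step is not quite right. It is \emph{not} true that every $Q'$ contains ``the red versions of $e_1$ and $e_2$'': already $\tau(Q)$ does not. Nor is there a fixed ``copy of $s$'' sitting inside every $Q'$ with prescribed edges. What is true --- and what you also write as the intended content --- is simply that no inner edge of $Q'$ connects the two sides; from that alone the product description follows. So drop the forced-edge language and keep ``the two sides never communicate''.

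Second, the paper dispatches that separation claim in one line, without any tracing in the style of Lemma~\ref{hexa_flip}. Writing the bridge square as $(i,i+1,j,j+1)$, its two inner (blue) edges are $(i,j+1)$ and $(i+1,j)$; since $i,j$ are one colour and $i+1,j+1$ the other, these two edges are oriented in \emph{opposite} directions across the square. Any red edge crossing both is then compatible with one if and only if it is incompatible with the other, so no $Q$-compatible edge crosses both. This is the whole argument for your ``main obstacle'', and it is considerably shorter than what you were preparing to do.
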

\begin{proof}
  The hypothesis on $Q$ says that in the polygon there are four
  vertices $i,i+1$ and $j,j+1$ that form the given square $s$ of
  $Q$. Because the edges $(i,j+1)$ and $(j,i+1)$ are oriented in
  opposite directions, no $Q$-compatible edge can cross them
  both. This implies that $Q$-compatible quadrangulations can be
  described as pairs made of one $Q_1$-compatible quadrangulation and
  one $Q_2$-compatible quadrangulation. One can check that the flips
  occur in each factor independently.
\end{proof}

\subsection{Simplicial complex and $F$-triangle}

\label{ftriangle}

Let $Q$ be a fixed quadrangulation. Let $G_Q$ be the simplicial
complex whose simplices are the sets of non-crossing $Q$-compatible
inner edges. Every $Q$-compatible quadrangulation correspond to a
maximal simplex in $G_Q$.

This simplicial complex is pure, because every partial $Q$-compatible
quadrangulation can always be completed into a $Q$-compatible
quadrangulation. To see this, one can observe that a set of
$Q$-compatible edges cuts the polygon into pieces, and each piece
behave with respect to compatibility as a smaller polygon, with an
underlying quadrangulation inherited from $Q$. 

The simplicial complex $G_Q$ is the flag complex, on the
ground set of $Q$-compatible inner edges, for the compatibility given
by being non-crossing.

Let $I^-_Q$ be the set of initial inner edges in $Q$ (seen as $Q$-compatible inner edges), and let $\Phi^+_Q$ be the
set of all other $Q$-compatible inner edges.

Let us define the $F$-triangle as the polynomial in two variables
\begin{equation}
  F_Q(x,y) = \sum_{f \in G_Q} x^{\# f \cap \Phi^+_Q} y^{\# f \cap I^-_Q}.
\end{equation}
When evaluated at $(x,x)$, this reduces to the usual $f$-vector of the
simplicial complex $G_Q$.

There is some kind of parabolic structure in quadrangulations, akin to
the classical theory of roots systems or Coxeter groups. The role of
simple roots is played by the inner edges. 
\begin{proposition}
  \label{parabo_F}
  There holds
  \begin{equation}
    y \partial_y F_Q = y \sum_{e \in Q} F_{Q^-_e}F_{Q^+_e},
  \end{equation}
  where the sum runs over inner edges of $Q$, and $Q^-_e,Q^+_e$ are
  the quadrangulations obtained by cutting $Q$ along $e$.
\end{proposition}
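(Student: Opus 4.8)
The plan is to read both sides as generating functions over faces of the flag complex $G_Q$ and to match them by cutting $Q$ along an inner edge. First I would rewrite the left-hand side: applying $y\partial_y$ to a monomial $x^{\#f\cap\Phi^+_Q}y^{\#f\cap I^-_Q}$ multiplies it by $\#f\cap I^-_Q$, so
\begin{equation}
  y\partial_y F_Q = \sum_{f\in G_Q} (\#f\cap I^-_Q)\, x^{\#f\cap\Phi^+_Q}y^{\#f\cap I^-_Q}
  = \sum_{e\in I^-_Q}\ \sum_{\substack{f\in G_Q\\ e\in f}} x^{\#f\cap\Phi^+_Q}y^{\#f\cap I^-_Q}.
\end{equation}
Since $I^-_Q$ is exactly the set of inner edges of $Q$, the outer sum is already the sum on the right-hand side of the proposition, so it suffices to prove, for every inner edge $e$ of $Q$, the identity
\begin{equation}
  \sum_{\substack{f\in G_Q\\ e\in f}} x^{\#f\cap\Phi^+_Q}y^{\#f\cap I^-_Q} = y\, F_{Q^-_e}(x,y)\, F_{Q^+_e}(x,y).
\end{equation}

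Next I would set up a bijection. Fix an inner edge $e$ of $Q$; seen in the red polygon it is one of the $Q$-compatible inner edges in $I^-_Q$, and cutting along it splits the polygon into two sub-polygons $P_-$ and $P_+$, on which $e$ now sits as a boundary edge, with inherited quadrangulations $Q^-_e$ and $Q^+_e$. By the localisation observation already used to show that $G_Q$ is pure, a $Q$-compatible inner edge lying inside $P_\pm$ is precisely a $Q^\pm_e$-compatible inner edge of $Q^\pm_e$, and the non-crossing relation is the one inherited from the big polygon. Now any face $f\in G_Q$ with $e\in f$ decomposes as $f=\{e\}\sqcup g_-\sqcup g_+$, where $g_\pm$ collects the edges of $f$ lying in $P_\pm$: every edge of $f$ other than $e$ fails to cross $e$ (faces are pairwise non-crossing), hence lies on one side. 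Conversely, given $g_-\in G_{Q^-_e}$ and $g_+\in G_{Q^+_e}$, the set $\{e\}\cup g_-\cup g_+$ is a face of $G_Q$: all its members are $Q$-compatible inner edges (the localisation observation for $g_\pm$, and $e\in I^-_Q$ for $e$), and they are pairwise non-crossing ($g_-$ internally, $g_+$ internally, $g_-$ against $g_+$ since they lie on opposite sides of $e$, and $e$ against the rest since those edges lie inside $P_\pm$). This is a bijection between $\{f\in G_Q:e\in f\}$ and $G_{Q^-_e}\times G_{Q^+_e}$.

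Then I would track the exponents through this bijection. The $Q$-compatible inner edges of $f$ are $e$ together with those of $g_-$ and of $g_+$. The edge $e$ is an inner edge of $Q$, so it contributes $1$ to $\#f\cap I^-_Q$ and $0$ to $\#f\cap\Phi^+_Q$. An edge of $g_-$ lies in $P_-$; since the inner edges of $Q$ inside $P_-$ are exactly the inner edges of $Q^-_e$, such an edge lies in $I^-_Q$ iff it lies in $I^-_{Q^-_e}$, and in $\Phi^+_Q$ iff it lies in $\Phi^+_{Q^-_e}$, and similarly for $g_+$. Hence $\#f\cap I^-_Q=1+\#g_-\cap I^-_{Q^-_e}+\#g_+\cap I^-_{Q^+_e}$ and $\#f\cap\Phi^+_Q=\#g_-\cap\Phi^+_{Q^-_e}+\#g_+\cap\Phi^+_{Q^+_e}$. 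Summing the monomials over the bijection therefore factors as $y\,F_{Q^-_e}(x,y)\,F_{Q^+_e}(x,y)$, proving the displayed identity; summing over all inner edges $e$ gives the proposition.

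The main obstacle is the localisation step: making precise that cutting along the $Q$-compatible edge attached to an inner edge $e$ of $Q$ decomposes the compatibility data into the two setups $(P_-,Q^-_e)$ and $(P_+,Q^+_e)$, so that $Q$-compatible inner edges on a given side are exactly the $Q^\pm_e$-compatible inner edges of $Q^\pm_e$, with the partition into initial edges ($I^-$) and the remaining ones ($\Phi^+$) respected. This is the same local statement invoked when proving that $G_Q$ is pure, but one should check it genuinely applies with a single edge of $Q$ as the cutting edge — in particular that $e$ becomes a boundary edge of each inherited sub-polygon, so that it accounts for exactly the one extra factor of $y$ and for nothing in $x$.
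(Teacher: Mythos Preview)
Your argument is correct and is exactly the paper's approach, carried out in full detail: the paper's proof merely says that the left-hand side counts simplices of $G_Q$ with a marked initial inner edge and that these are ``clearly in bijection'' with pairs of simplices in $G_{Q^+_e}$ and $G_{Q^-_e}$. The localisation step you flag as the main obstacle is precisely the observation invoked when proving that $G_Q$ is pure, and the paper leaves it at the same informal level you do.
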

\begin{proof}
  The left hand side is counting simplices in the simplicial complex
  $G_Q$, with a marked initial inner edge. These are clearly in
  bijection with the disjoint union, over inner edges of $Q$, of pairs
  of simplices in the simplicial complexes $G_{Q^+_e}$ and $G_{Q^-_e}$.
\end{proof}

Let now $s$ be a bridge in a quadrangulation $Q$, as defined in \autoref{prop_bridge}.

Let $Q_1$ (resp. $Q_2$) be the quadrangulation obtained from $Q$ by
removing all squares from one side of $s$ (resp. from the other side).

\begin{proposition}
  \label{simplicial_prod}
  The simplicial complex $\Gamma_Q$ is the direct product of the
  simplicial complexes $\Gamma_{Q_1}$ and $\Gamma_{Q_2}$. The
  $F$-triangle of $Q$ is the product of the $F$-triangles of $Q_1$ and
  $Q_2$.
\end{proposition}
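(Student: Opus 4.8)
The plan is to reduce the statement to \autoref{dg_prod} together with the combinatorial description of $G_Q$ as a flag complex. First I would recall from the proof of \autoref{dg_prod} that, when $s$ is a bridge bounded by vertices $i,i+1$ and $j,j+1$, the two diagonals $(i,j+1)$ and $(j,i+1)$ of $s$ are oriented in opposite directions, so no $Q$-compatible inner edge can cross both of them. Hence every $Q$-compatible inner edge lies entirely ``on one side'' of $s$: it is either a $Q_1$-compatible inner edge (living in the part containing $Q_1$) or a $Q_2$-compatible inner edge, and these two sets are disjoint. This gives a partition of the ground set of $G_Q$ into the ground sets of $G_{Q_1}$ and $G_{Q_2}$.

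Next I would check the crossing (= non-compatibility of the flag structure) relation. Two $Q$-compatible inner edges that lie on opposite sides of $s$ never cross, because each is confined to its own half of the polygon and the two halves overlap only in the square $s$, which contains no inner edge. Therefore the non-crossing flag complex built on the disjoint union of the two ground sets is exactly the join (direct product, in the sense used here) of the flag complex on the $Q_1$-side and the flag complex on the $Q_2$-side: a simplex of $G_Q$ is a non-crossing set of $Q$-compatible inner edges, and splitting it according to the side of $s$ yields a bijection $f \mapsto (f_1,f_2)$ with $f_1$ a simplex of $G_{Q_1}$ and $f_2$ a simplex of $G_{Q_2}$, and conversely any such pair gives a simplex of $G_Q$. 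This proves $G_Q = G_{Q_1} \ast G_{Q_2}$ (the statement writes $\Gamma_Q$ but means the simplicial complex $G_Q$); consistency with \autoref{dg_prod} comes from the fact that maximal simplices correspond to $Q$-compatible quadrangulations on each side.

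For the $F$-triangle I would simply track the two statistics. The bijection $f \mapsto (f_1,f_2)$ above restricts to a bijection on the bipartitions of the ground set: the initial inner edges of $Q$ are precisely the initial inner edges of $Q_1$ together with those of $Q_2$ (an initial inner edge of $Q$ lies on one side of $s$ and is initial there, since removing the other side does not change the local picture near it; conversely the initial inner edges of $Q_1$ and of $Q_2$ are initial in $Q$), so $I^-_Q = I^-_{Q_1} \sqcup I^-_{Q_2}$ and likewise $\Phi^+_Q = \Phi^+_{Q_1} \sqcup \Phi^+_{Q_2}$. Hence $\# f \cap \Phi^+_Q = \# f_1 \cap \Phi^+_{Q_1} + \# f_2 \cap \Phi^+_{Q_2}$ and similarly for $I^-$, and summing the product of monomials over all pairs $(f_1,f_2)$ gives $F_Q(x,y) = F_{Q_1}(x,y)\,F_{Q_2}(x,y)$.

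The main obstacle is the bookkeeping in the second paragraph: one must argue carefully that $Q$-compatibility of an inner edge on the $Q_1$-side is equivalent to $Q_1$-compatibility, and that the non-crossing condition genuinely decouples across $s$. Both facts hinge on the observation, already used in the proof of the purity of $G_Q$, that a set of $Q$-compatible edges cuts the polygon into pieces each of which behaves like a smaller polygon with an inherited quadrangulation; applying this to the single diagonal of $s$ that separates $Q_1$ from $Q_2$ is exactly what is needed, and the orientation argument of \autoref{dg_prod} guarantees that $Q$-compatible edges respect this cut. Once that is in place the rest is a routine bijection, so I would keep the written proof short, citing \autoref{dg_prod} and the purity discussion rather than repeating them.
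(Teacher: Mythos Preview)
Your proposal is correct and follows essentially the same route as the paper, which simply writes that the result ``follows from the same basic properties of the compatibility relation as prop.~\ref{dg_prod}.'' You have unpacked exactly what those properties are---the orientation argument forcing every $Q$-compatible inner edge to lie on one side of the bridge, and the resulting decoupling of the non-crossing condition---and added the (straightforward but worth stating) verification that the partition $I^-_Q = I^-_{Q_1}\sqcup I^-_{Q_2}$ is respected, which is what makes the $F$-triangle factor and not merely the $f$-vector.
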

\begin{proof}
  This follows from the same basic properties of the compatibility
  relation as prop. \ref{dg_prod}.
\end{proof}

In the work of Baryshnikov \cite{baryshnikov}, all the simplicial
complexes $G_Q$ are described as the dual of simple polytopes. In
particular, they are all spherical, and we will use here this
result. It would certainly be interesting to study the fans and the
polytopes that are involved.

The $F$-triangle has a nice symmetry.
\begin{proposition}
  \label{F_sym}
  There holds
  \begin{equation}
    F_Q(-1-x,-1-y) = (-1)^n F_Q(x,y).  
  \end{equation}
\end{proposition}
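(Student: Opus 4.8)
The plan is to interpret the symmetry $F_Q(-1-x,-1-y) = (-1)^n F_Q(x,y)$ as an Euler-characteristic / Dehn–Sommerville type identity for the simplicial complex $G_Q$, exploiting the fact (cited from Baryshnikov) that $G_Q$ is the boundary complex of a simple polytope, hence a sphere. First I would rewrite $F_Q$ in a more flexible form: since $G_Q$ is a flag complex and the ground set of $Q$-compatible inner edges splits as $I^-_Q \sqcup \Phi^+_Q$, the two-variable generating polynomial $F_Q(x,y) = \sum_{f\in G_Q} x^{\#(f\cap\Phi^+_Q)}\,y^{\#(f\cap I^-_Q)}$ records each face by its number of "positive" and "initial" edges. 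The substitution $x\mapsto -1-x$, $y\mapsto -1-y$ should be handled by expanding $(-1-x)^a(-1-y)^b = (-1)^{a+b}(1+x)^a(1+y)^b$ and then re-summing.

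The key step is a standard combinatorial manipulation: expand $(1+x)^a = \sum_{a'\le a}\binom{a}{a'}x^{a'}$ (and similarly for $y$), so that
\begin{equation}
F_Q(-1-x,-1-y) = \sum_{f\in G_Q} (-1)^{\#f}\sum_{g\subseteq f} x^{\#(g\cap\Phi^+_Q)} y^{\#(g\cap I^-_Q)},
\end{equation}
where $g$ ranges over subsets of $f$ (these are automatically faces of $G_Q$, since $G_Q$ is a simplicial complex). Swapping the order of summation, the coefficient of $x^{\#(g\cap\Phi^+_Q)}y^{\#(g\cap I^-_Q)}$ attached to a fixed face $g$ becomes $\sum_{f\supseteq g,\ f\in G_Q} (-1)^{\#f}$. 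One then recognizes this inner sum, up to the sign $(-1)^{\#g}$, as (plus or minus) the reduced Euler characteristic of the link $\operatorname{lk}_{G_Q}(g)$: indeed $\sum_{f\supseteq g}(-1)^{\#f} = (-1)^{\#g}\sum_{h\in \operatorname{lk}(g)}(-1)^{\#h} = (-1)^{\#g}\bigl(\chi(\operatorname{lk}(g)) - 1\bigr)$ with the appropriate convention, plus the contribution of the empty face. Because $G_Q$ is (the boundary complex of a simple polytope, hence) a triangulated sphere of dimension $n-2$, every link $\operatorname{lk}_{G_Q}(g)$ of a face $g$ with $\#g = k$ is itself a triangulated sphere of dimension $n-2-k$, so its reduced Euler characteristic is $(-1)^{n-2-k}$. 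Substituting this in collapses the double sum back to $\pm F_Q(x,y)$, and a careful bookkeeping of the signs (the $(-1)^{\#f}$, the $(-1)^{\#g}$ from the link shift, and the $(-1)^{n-2-k}$ from the sphere) yields precisely the global factor $(-1)^n$.

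The main obstacle I anticipate is the sign bookkeeping together with correctly treating the degenerate cases: the empty face $g=\varnothing$ (whose link is all of $G_Q$), faces $g$ that are themselves facets (empty link, i.e. the $(-1)$-sphere), and making sure the convention for reduced Euler characteristic is applied uniformly so that "a triangulated $(d)$-sphere has reduced Euler characteristic $(-1)^d$" holds including $d=-1$. A clean way to avoid ad hoc case analysis is to note that the identity to be proved is multiplicative under the bridge decomposition (Proposition \ref{simplicial_prod}) and behaves well under the parabolic recursion (Proposition \ref{parabo_F}); one could alternatively set up an induction on $n$ using the leaf-square reduction and the fibre structure of $\theta_s$, checking the base case $n=1$ (where $G_Q$ is empty and $F_Q=1$, and $(-1-x,\cdot)$ trivially gives $(-1)^1\cdot$ — here one must be slightly careful, as $F_{C_1}=1$ and $(-1)^1\cdot 1\ne 1$, signalling that the statement really needs $n\ge 1$ with the sphere input, so the Euler-characteristic route is the robust one). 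I would therefore present the Euler-characteristic argument as the main proof, invoking Baryshnikov's sphericity of $G_Q$, and relegate the sign accounting to a short explicit verification.
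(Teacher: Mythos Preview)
Your approach is correct and is essentially the same as the paper's: the paper's own proof is a one-line citation (``a simple consequence of the fact that $G_Q$ are spherical simplicial complexes, see \cite[Prop.~5]{chap_enum}''), and what you have written is precisely the standard Dehn--Sommerville/Euler-characteristic argument that lives behind that citation. Expanding $(-1-x)^a(-1-y)^b$, swapping the sums, and recognising $\sum_{f\supseteq g}(-1)^{\#f}$ as $(-1)^{\#g}$ times (minus) the reduced Euler characteristic of $\operatorname{lk}_{G_Q}(g)$, then using that links in a $(d)$-sphere are spheres, is exactly the computation carried out in the cited reference; your sign bookkeeping, once done carefully, gives a single global factor independent of $g$.

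Your worry about the base case $n=1$ is not a flaw in your argument but a notational wrinkle in the paper: with $n$ denoting the number of squares, $G_Q$ is an $(n-2)$-sphere and the computation yields $(-1)^{n-1}$, i.e.\ the exponent should be the number of \emph{inner edges}. The paper in fact silently switches to ``$n$ = number of inner edges'' a few paragraphs later (just before the $H$-triangle conjecture), and that is the convention under which the stated identity holds. So you may drop the tentative inductive alternative and simply present the Euler-characteristic computation, noting that the exponent is the number of inner edges; the degenerate cases (empty face, facets) are covered uniformly by the convention $\tilde\chi(S^{-1})=-1$.
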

\begin{proof}
  This is a simple consequence of the fact that $G_Q$ are spherical
  simplicial complexes, see \cite[Prop. 5]{chap_enum} for the proof of
  this equation in a very similar context.
\end{proof}

\subsection{Twisting along an edge}

\label{twisting}

Let $Q$ be a quadrangulation, and $e$ be a inner edge of $Q$. Let $Q'_e$
and $Q''_e$ be the two quadrangulations defined by cutting $Q$ along
$e$. Let $\overline{Q''_e}$ be the image of $Q''_e$ by a reflection in the
plane (the mirror image of $Q''_e$).

The \textit{twist} of $Q$ along $e$ (on the $Q''_e$ side) is the quadrangulation
defined by gluing back $Q'_e$ and $\overline{Q''_e}$ along their boundary
edges corresponding to $e$.

Twisting twice on the same side gives back $Q$. Twisting successively
on opposite sides give the mirror image of $Q$.

Let $Q$ and $Q'$ be quadrangulations related by twisting along
one inner edge.
\begin{conjecture}
  \label{twisting_ok}
  The undirected flip graphs $\gamma_Q$ and $\gamma_{Q'}$ are isomorphic. The simplicial complexes $G_{Q}$ and $G_{Q'}$ are isomorphic.
\end{conjecture}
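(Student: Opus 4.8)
The plan is to reduce both assertions to a single combinatorial statement. Recall that $G_Q$ is the flag complex on the set of $Q$-compatible inner edges for the relation of being non-crossing, so it is determined by this ground set together with the crossing relation on it; and $\gamma_Q$ is the dual (ridge) graph of the pure complex $G_Q$, its vertices being the facets of $G_Q$ — the $Q$-compatible quadrangulations — and two of them being adjacent exactly when they differ in a single inner edge, i.e.\ are related by a flip (Proposition~\ref{quad_flip}). Hence it suffices to build a bijection
\[
 \phi\colon\{Q\text{-compatible inner edges}\}\longrightarrow\{Q'\text{-compatible inner edges}\}
\]
such that $\phi$ and $\phi^{-1}$ both send non-crossing pairs to non-crossing pairs: such a $\phi$ induces a simplicial isomorphism $G_Q\cong G_{Q'}$, hence $\gamma_Q\cong\gamma_{Q'}$, and if moreover $\phi$ carries the set $I^-_Q$ of initial inner edges of $Q$ onto $I^-_{Q'}$ one also gets $F_Q=F_{Q'}$, as announced in the introduction.

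The map $\phi$ should come from the cut-and-reglue description of the twist. Cutting $Q$ along $e$ splits the polygon into a sub-polygon $P'$ carrying $Q'_e$ and a sub-polygon $P''$ carrying $Q''_e$, glued along $e$, and the polygon of $Q'$ is obtained by replacing $P''$ with its mirror image across the line of $e$. Each $Q$-compatible inner edge $d$ lies in exactly one class: it is $e$; or it lies inside $P'$; or it lies inside $P''$; or it joins a vertex of $P'$ to a vertex of $P''$. One sets $\phi(e)=e$, $\phi=\mathrm{id}$ on the diagonals inside $P'$, $\phi=$ the reflection across the line of $e$ on the diagonals inside $P''$, and, on a diagonal joining $P'$ to $P''$, $\phi$ keeps the $P'$-endpoint and reflects the $P''$-endpoint. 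That $\phi$ preserves non-crossing in both directions on the diagonals not crossing $e$ is routine: a non-crossing family together with $e$ decomposes the polygon, hence restricts to a non-crossing family in $P'$ and one in $P''$, each staying non-crossing under the identity, resp.\ the reflection, and conversely.

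The difficulty, and the reason the statement is left conjectural, is to show that $\phi$ maps the $Q$-compatible inner edges onto exactly the $Q'$-compatible ones, and to handle the diagonals that cross $e$. For diagonals inside one of the sub-polygons, compatibility should be controlled by a locality principle — a diagonal of $P'$ (resp.\ $P''$) is $Q$-compatible if and only if it is $Q'_e$-compatible (resp.\ $Q''_e$-compatible), the same ``restriction to a sub-polygon'' phenomenon already used to prove that $G_Q$ is pure and to obtain the bridge factorisation of Proposition~\ref{dg_prod} — combined with the behaviour of compatibility under the reflection of the $P''$-side. This last point is the heart of the matter: reflection across the line of $e$ reverses the orientation of the plane and so turns Baryshnikov's compatibility into the opposite (``anti-compatibility'') condition, so one must check that the Stokes complex built from anti-compatibility is isomorphic, ground set and crossing relation included, to the one built from compatibility; a natural candidate isomorphism is the half-turn rotation of the superimposed red polygon (rotation by $n+1$ vertices), which reverses the direction of every red edge, hence flips every compatibility, while preserving the cyclic order of the vertices and therefore the crossing relation. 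The diagonals joining $P'$ to $P''$ are subtler still: for such a diagonal the compatibility with an inner edge of $Q$ depends on the positions of both endpoints, so it is not local to one side, and even the non-crossing relation among them is not obviously preserved by the naive recipe; I expect this to be resolved by a direct study of the hexagon of $Q$ formed by the two squares adjacent to $e$, describing precisely which diagonals through it are $Q$-compatible in terms of $Q'_e$ and $Q''_e$, and then checking that $\phi$ (possibly after a correction on this class) matches the $Q'$-compatible diagonals crossing $e$ and their crossings. Once compatibility is settled, that $\phi$ respects initial inner edges is immediate — an initial inner edge of $Q$ inside $P'$ (resp.\ $P''$) is an initial inner edge of $Q'_e$ (resp.\ $Q''_e$), and $e$ itself is initial on both sides — which yields $F_Q=F_{Q'}$.

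Should the bijective route prove intractable, an alternative is to induct on the number $n$ of squares using a leaf square $s$ of $Q$ not incident to $e$ and the reduction map $\theta_s$ of \autoref{poset_section}: twisting along $e$ then commutes with deleting $s$, the fibres of $\theta_s$ correspond on the two sides, and the desired isomorphism is inherited from $Q^{\setminus s}$; the remaining configurations, in which every leaf of $Q$ is incident to $e$, force $Q$ to be a very short chain of squares around $e$, in which case the twist amounts to a global reflection already covered by the half-turn argument above.
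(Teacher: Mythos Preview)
The paper does not prove this statement: it is stated as Conjecture~\ref{twisting_ok}, with no argument beyond the remark that the same claim was already proposed by Baryshnikov for the dual polytopes. So there is no proof in the paper to compare your proposal against, and your write-up is itself explicit that it is an outline of strategies rather than a proof.

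The gaps you flag are genuine, and there are two further ones worth naming. First, the half-turn of the red polygon you invoke to turn anti-compatibility back into compatibility is a global symmetry of the whole $(2n+2)$-gon, not of the sub-polygon $P''$ alone; it therefore does not patch together with the identity on the $P'$ side to give a single well-defined map on diagonals having one endpoint in each half, so even after this fix the description of $\phi$ on diagonals crossing $e$ is still missing. Second, in the inductive alternative, an isomorphism $\gamma_{Q^{\setminus s}}\cong\gamma_{(Q')^{\setminus s}}$ obtained from the induction hypothesis is not enough to lift to $\gamma_Q\cong\gamma_{Q'}$: you would need the isomorphism downstairs to preserve, at each vertex, the number of inner edges incident to the collapsed vertex (so that the $\theta_s$-fibres match in size), and moreover to be compatible with how out-fibre flips connect specific elements across adjacent fibres; neither piece of structure is visible in the bare undirected graph $\gamma_{Q^{\setminus s}}$, so the induction hypothesis as stated is too weak. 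Your base case (every leaf of $Q$ incident to $e$ forces $n=2$) is correct, but the inductive step does not close without carrying substantially more data through the induction.
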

It should be noted that the oriented flip-graphs are not the same
under this hypothesis.

In particular, the number of $Q$-compatible quadrangulations should
be the same as the number of $Q'$-compatibles ones.

The statement of conjecture \ref{twisting_ok} was already proposed in
\cite{baryshnikov} for the dual polytopes.

Moreover, one also expects some stronger enumerative invariance.
\begin{conjecture}
  \label{twisting_F}
  The $F$-triangles of $G_{Q}$ and $G_{Q'}$ are equal.
\end{conjecture}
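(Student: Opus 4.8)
The plan is to produce a single bijection between the set of $Q$-compatible inner edges and the set of $Q'$-compatible inner edges that preserves the non-crossing relation and the partition of these ground sets into the ``initial'' part $I^-$ and the ``positive'' part $\Phi^+$. Since $F_Q$ is by definition the $\{I^-,\Phi^+\}$-bigraded face generating polynomial of the flag complex $G_Q$ on $Q$-compatible inner edges, and similarly for $Q'$, such a bijection yields $F_Q=F_{Q'}$ at once, and in particular reproves Conjecture~\ref{twisting_ok}. Write $e$ for the inner edge along which we twist, and $Q=Q'_e\cup Q''_e$ for the cut of $Q$ along $e$, so that $Q'=Q'_e\cup\overline{Q''_e}$. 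The candidate bijection is the identity on inner edges supported on the $Q'_e$ side and the planar reflection on inner edges supported on the $\overline{Q''_e}$ side. Its correctness amounts to: (i) a red inner edge disjoint from $e$ is $Q$-compatible if and only if its image is $Q'$-compatible; (ii) the same holds for the finitely many ``straddling'' red inner edges that cross $e$, where one must compare the local fan of $Q$ around $e$ with that of $Q'$; and (iii) this bijection carries $I^-_Q$ onto $I^-_{Q'}$, which is immediate because $I^-_Q$ consists exactly of the inner edges of $Q$ itself and these are precisely the edges moved in the prescribed way when $Q'$ is built from $Q$. Point (iii) then forces $\Phi^+_Q$ onto $\Phi^+_{Q'}$ by taking complements, so the whole $F$-triangle transports.

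For (i) one uses that compatibility of a red edge with $Q$ only involves the inner edges of $Q$ it can interact with geometrically: a red edge contained in the $Q''_e$ half sees only the inner edges of $Q$ in that half, together with $e$ itself, whose contribution is symmetric under the reflection fixing $e$; reflecting both the red edge and that half therefore preserves compatibility. Point (ii) is the delicate one and is essentially the geometric heart of Conjecture~\ref{twisting_ok}: one has to check that reflecting the $Q''_e$ side carries the set of $Q$-compatible red inner edges crossing $e$ bijectively onto the set of $Q'$-compatible red inner edges crossing $e$, compatibly with the non-crossing relation among all edges. I expect this to follow from a case analysis of how a red edge can cross a blue inner edge $e$ of a quadrangulation while remaining compatible with the two quadrangulations induced on the two sides — of a flavour similar to the proof of Proposition~\ref{dg_prod}, but with $e$ a general inner edge rather than a bridge edge, so that the clean factorisation fails and only a bijection of straddling edges is obtained. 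This is the step I expect to be the main obstacle.

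An alternative inductive route, which may sidestep part of (ii), is an induction on the number of squares $n$ using the parabolic recursion of Proposition~\ref{parabo_F}. For an inner edge $e'\neq e$, twisting along $e$ and cutting along $e'$ commute: cutting $Q'$ along the image of $e'$ produces, as an unordered pair, the same two quadrangulations one gets by cutting $Q$ along $e'$ and then twisting whichever piece contains $e$ along $e$. Summing over $e'\neq e$ and applying the induction hypothesis to the strictly smaller pieces, together with the case $e'=e$ where the cut literally undoes the twist — one factor is unchanged, the other is replaced by its mirror image, which has the same $F$-triangle because reflecting the whole polygon is a symmetry of the compatibility setup (it may swap in-flips with out-flips, consistently with the remark after Conjecture~\ref{twisting_ok}, but not $G_Q$ nor the $I^-/\Phi^+$ data) — shows that $y\partial_y F_Q = y\partial_y F_{Q'}$, i.e. $F_Q$ and $F_{Q'}$ agree in every monomial of positive degree in $y$. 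To conclude one only needs the $y^0$ coefficient; here Conjecture~\ref{twisting_ok} gives the isomorphism $G_Q\cong G_{Q'}$ and hence $F_Q(x,x)=F_{Q'}(x,x)$, and since $F_Q-F_{Q'}$ already has no positive-$y$-degree terms yet must vanish on the diagonal $y=x$, it vanishes identically. Thus, modulo the geometric input of Conjecture~\ref{twisting_ok}, the bigraded refinement is essentially bookkeeping, because the twist merely reshuffles inner edges within the well-identified sets $I^-$ and $\Phi^+$.
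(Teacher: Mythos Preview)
The statement you are trying to prove is presented in the paper as a \emph{conjecture}; the paper offers no proof, and indeed the preceding Conjecture~\ref{twisting_ok} on which part of your argument leans is likewise left open. So there is nothing to compare against: any complete argument would be new.

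Your proposal, however, is not a complete argument, and the gaps are more serious than you indicate. In approach~1 you claim in~(i) that ``reflecting both the red edge and that half preserves compatibility''. But compatibility is an \emph{orientation} condition: a red edge is compatible with a blue edge when the ordered pair of oriented edges defines the positive orientation of the plane. Any planar reflection reverses orientation, so reflecting both edges sends a compatible pair to an incompatible one unless something else (a colour swap, a change of rotation direction of the red polygon) restores it. You do not address this, and in fact the twist is not literally a reflection of the ambient regular polygon: $Q$ and $Q'$ live in the \emph{same} $2n{+}2$-gon with the \emph{same} red polygon, so ``reflecting'' a red diagonal is not even a well-defined operation on red diagonals unless you spell out which permutation of red vertices you mean and why it sends $Q$-compatible edges to $Q'$-compatible ones. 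Point~(ii) you already flag as the main obstacle and leave undone, so approach~1 is at best a heuristic.

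Approach~2 is cleaner but is explicitly conditional on Conjecture~\ref{twisting_ok}, which is open in the paper. Even granting it, your endgame needs a small repair: from $y\partial_y F_Q=y\partial_y F_{Q'}$ you get equality of all monomials with $y$-degree $\geq 1$, and then $F_Q(x,x)=F_{Q'}(x,x)$ forces equality of the $y^0$ part. That logic is fine, but you should also justify the claim that the $F$-triangle of a quadrangulation equals that of its mirror image; this is true (reflect the whole configuration and swap the bicolouring, which the paper notes leaves $Q$-compatibility unchanged), but it deserves a line. In sum, what you have is a reduction of Conjecture~\ref{twisting_F} to Conjecture~\ref{twisting_ok} plus mirror-invariance of the $F$-triangle, not a proof.
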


\section{Noncrossing trees and exceptional sequences}

\label{exc_section}

In this section, which does not contain much details, some
generalisations of the flip graphs are proposed, using exceptional
sequences on Dynkin diagrams or equivalent objects for Coxeter
groups. Beware that the correctness of this proposal depends on the
unproven alternative description of $\Gamma_Q$ in \autoref{no_twice}.

Let us start by a simple combinatorial reformulation of the
flip graphs of quadrangulations in terms of other combinatorial
objects.

A \textit{noncrossing tree} in the regular polygon with $n+1$ vertices is a set
of edges between vertices of the polygon, with the following properties
\begin{itemize}
  \item edges do not cross pairwise,
  \item any two vertices are connected by a sequence of edges,
  \item there is no loop made of edges. 
\end{itemize}
Note that boundary edges are allowed in the set, and a typical
noncrossing tree will contain only some of them.

There is a simple bijection between quadrangulations of the
$2n+2$-polygon and noncrossing trees in the $n+1$-polygon. Assume that
vertices of the $2n+2$-polygon have been coloured black and white
alternating. Then every quadrangle contains a unique black-black
diagonal (one of its two diagonals). The collection of these diagonal edges can be seen to form a
noncrossing tree. Conversely, by drawing a noncrossing tree using the
black vertices of the $2n+2$-polygon, one can consider all black-white
edges that do not cross edges of the noncrossing tree. This gives
back the quadrangulation.

\begin{figure}[ht]
  \begin{center}
    \includegraphics[height=3cm]{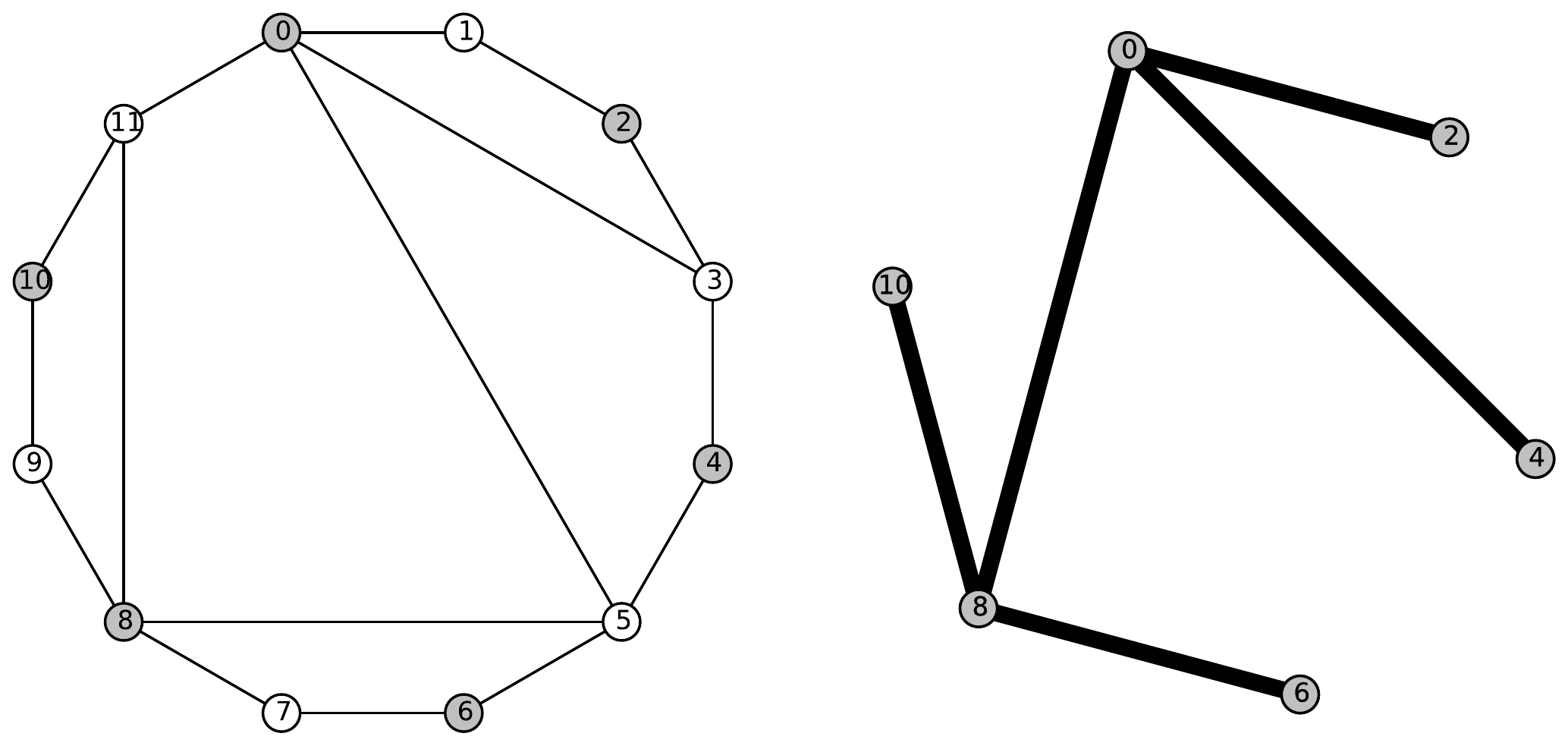}
  \end{center}
  \caption{Bijection between quadrangulations and noncrossing trees}
   \label{bij_quad_nct}
\end{figure}

Passing through this bijection, one can translate the flips of
quadrangulations into a simple operation on noncrossing trees. It is
in fact enough to consider what happens for the flip inside an hexagon,
as every flip will behave locally the same.

The result of as follows. A flip of noncrossing trees will change just
one edge of the noncrossing tree. Assume that there are edges $i - j$
and $k - j$ in the noncrossing tree $T$, with $i<j<k$ in the
clockwise cyclic order, and that there is no other edge incident to
$j$ between them inside the ambient polygon. Then the flip consists in
replacing $i - j$ by $i - k$.

Flipping twice in the same hexagon of a quadrangulation translates
into flipping again along two other sides of the same triangle
$(i,j,k)$ of a noncrossing tree.

Using the conjectural alternative description of \autoref{no_twice},
one would therefore be able to restate the digraph $\Gamma_Q$ in terms
of flips of noncrossing trees, not flipping twice consecutively along
two sides of the same triangle.

\subsection{From noncrossing trees to exceptional sequences}

For short, let $\TA_n$ denote in this section the equi-oriented quiver of type $\TA_n$. The
noncrossing trees in the $n+1$-polygon have been related in
\cite{araya} to exceptional sequences (up to shifts and reordering) in
the derived category $D\mod \TA_{n}$.

The correspondence goes as follows. By numbering the vertices of the
$n+1$-polygon from $0$ to $n$ clockwise, every inner edge gets a label
$(i,j)$ with $i<j$. Let the inner edge $(i,j)$ correspond to the
indecomposable module over $\TA_n$ with support $[i+1,j]$.

Then Araya proved that a noncrossing tree is sent by this map to a
collection of indecomposable modules that can be ordered into an exceptional sequence
and, conversely all the underlying sets of indecomposable modules coming from
exceptional sequences are obtained in this way. Let us call such a
collection of indecomposable modules an \textit{exceptional set}\footnote{It could also be called an exceptional collection, but there is no universal agreement on the meaning of that.}.

Given the description above of the flips acting on noncrossing trees,
one can readily check that they correspond, after the bijection to
exceptional sets, to the action of the braid group on exceptional
sequences (see for example \cite{crawley} for the braid group
action). More precisely, given an exceptional set, pick an ordering
into an exceptional sequence, act by one generator $s_i$ of the braid
group to get another exceptional sequence, and then forget about the
ordering. This describe what the flips are. 

Then moving twice consecutively in a same triangle gets translated to
acting twice by the same $s_i$ at the same place.

This would allow to generalise the definition of the flip graphs to
any finite Dynkin diagram, provided that the conjectural description
of \autoref{no_twice} holds.

\subsection{From exceptional sequences to chains of noncrossing partitions}

In fact, it is even possible to get from here to the more general
setting of finite Coxeter groups.

It has been known since the article \cite{ingalls_thomas} (see also
\cite{igusa_schiffler}) that exceptional sequences in derived
categories of representations of Dynkin quivers are closely related to
the corresponding lattices of noncrossing partitions.

More precisely, let $W$ be a finite crystallographic Coxeter group,
and $c$ be a Coxeter element in $W$. As is well-known, this data can
be translated into a quiver $Q$ with underlying graph the Dynkin
diagram.

There is a bijection between maximal chains in the noncrossing
partition lattice attached to $c$, and exceptional sequences of
modules over $Q$. In fact, maximal chains in the noncrossing partition
lattices are the same as factorisations of the Coxeter element $c$ as a
product of reflections\footnote{All reflections, not only simple ones.}.

In this new language, the action of the braid group on exceptional
sequences get translated to a similar action of the
braid group on maximal chains of the noncrossing partition
lattice\footnote{Sometimes named the Hurwitz action.}. When thinking to maximal chains as factorisations of the
Coxeter element into reflections, this is given by a simple conjugation action.

This interpretation would allow to extend the definition of the flip graphs
to all finite Coxeter groups, where quadrangulations are replaced by
maximal chains in the noncrossing partition lattice, up to reordering.

In these generalisations, some properties are lost. In particular, the
flip graphs are no longer regular, as one can see already for some
examples in type $\TD_4$.

\section{Serpent nests}

\label{nests_section}

In this section, we turn to another aspect of quadrangulations, that
should in fact be related to the previous one.

Let us start by some combinatorial definitions.

A \textit{serpent}\footnote{The word ``serpent'' is a synonym for
  ``snake'', which is already used for something else in the context
  of cluster algebras.} in a quadrangulation $Q$ is a set of two distinct
squares $s,s'$ such that the unique path from $s$ to $s'$ in $Q$ is
made of a sequence of right-angle turns. This means that following
this path, one never enters and leaves a square through opposite
sides. Abusing notation, one will also consider that the path is the
serpent. By convention, the path starts and ends at the centre of the
squares. See the leftmost part of \autoref{serpent_nest} for a visual example.

A \textit{serpent nest} is a set of serpents inside a quadrangulation,
satisfying an extra condition and modulo an equivalence relation:
\begin{itemize}
\item The \textbf{extra condition} is the following: two serpent extremities $s_1$
and $s_2$ can share the same square if and only if the serpents do not
leave this square by the same side.
\item The \textbf{equivalence relation} is the following: two sets of
serpents are considered equivalent if the pattern of paths inside every
square is the same in both.
\end{itemize}

Let us explain in more details the meaning of this equivalence
relation. Inside every square, there can be 8 kinds of path segments:
4 kinds entering by an edge and stopping at the centre (these can
appear at most once), and 4 kinds entering by an edge and leaving by
an adjacent edge (these can appear any number of times). The pattern
of paths just remembers how many paths segments of each kind there are.
\begin{figure}[ht]
  \begin{center}
    \includegraphics[height=2cm]{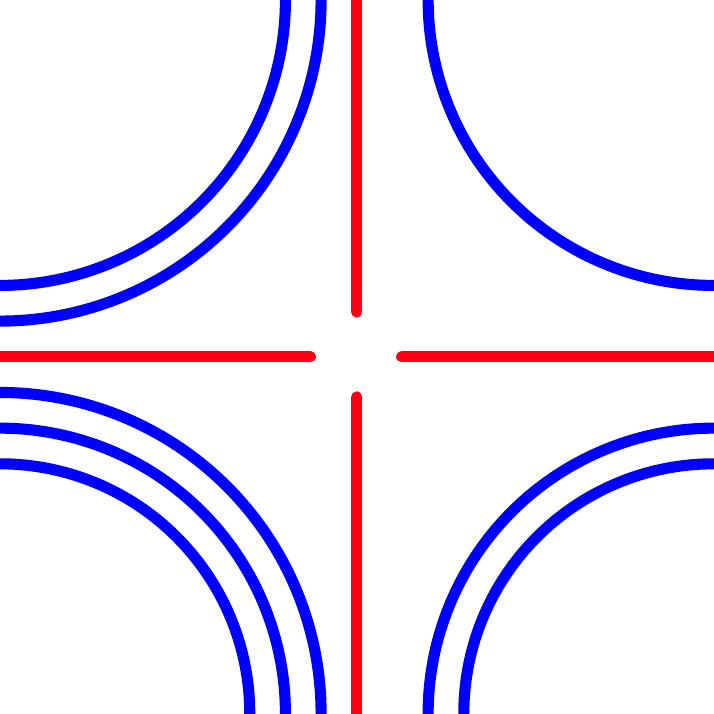}
  \end{center}
  \caption{An example of the pattern of paths in a square} 
\end{figure}

Another way to describe this equivalence relation is the
following. Consider two serpents that cross the same edge. Cut both of
them into two pieces along this edge, and glue them back after
swapping them. This gives another set of serpents that also satisfies the extra
condition. The equivalence relation is the closure of this kind of
move.

\begin{figure}[ht]
  \begin{center}
     \includegraphics[height=4cm]{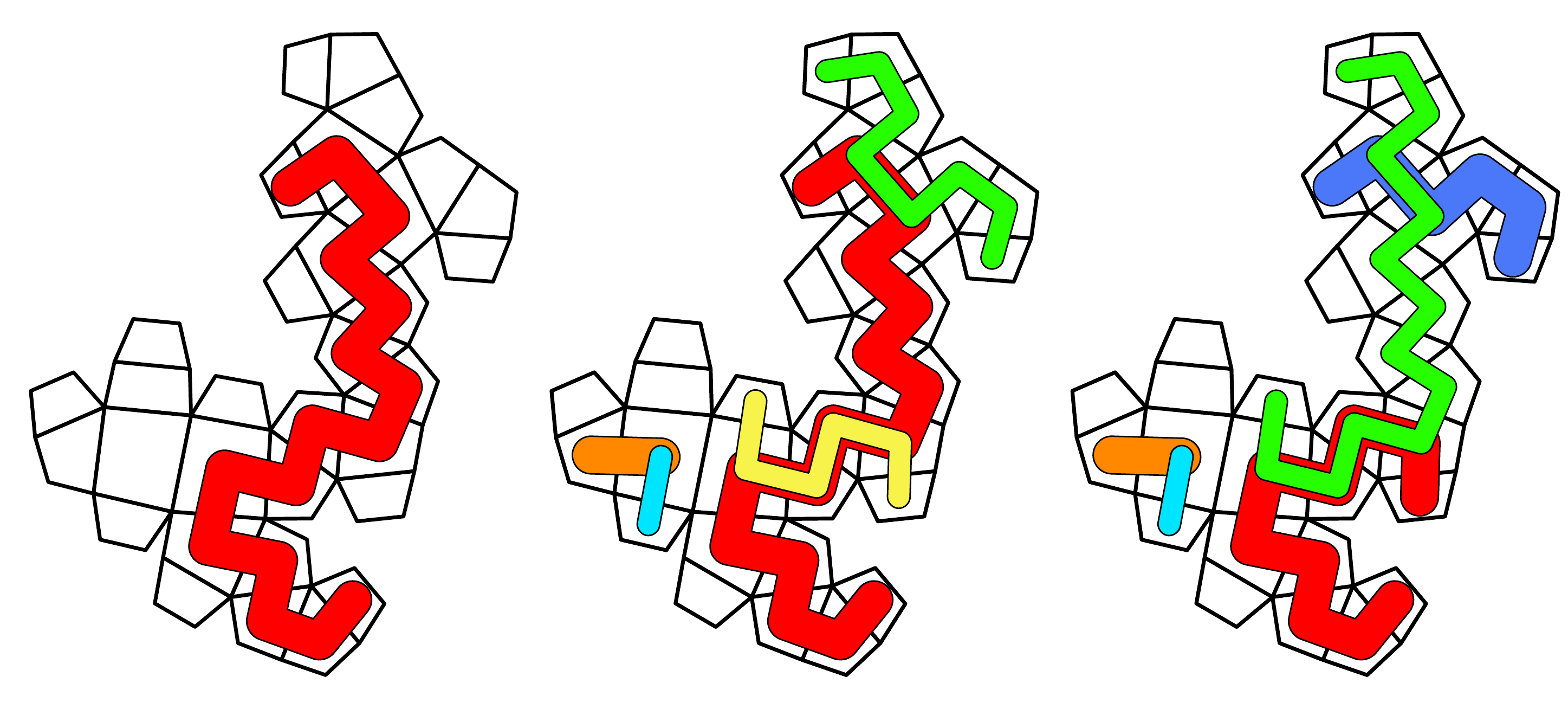}
   \end{center}
   \caption{A serpent and two representatives of the same serpent nest of rank $5$} 
   \label{serpent_nest}
 \end{figure}

The extra condition implies that every square can contain at most $4$
serpent extremities. Therefore, there are only a finite number of
different serpent nests in each quadrangulation.

Let $\SN_Q$ be the set of serpent nests in a quadrangulation $Q$.

Let us define the \textit{rank} $\rk(\sigma)$ of a serpent nest
$\sigma$ as the number of serpents in it. By a well-known principle,
this is half the number of serpent extremities.

Let $Q$ be a quadrangulation. Assume that $s$ is a bridge in $Q$ (as
defined in \autoref{prop_bridge}). Let $Q_1$ (resp. $Q_2$) be the
quadrangulation obtained from $Q$ by removing all squares from one
side of $s$ (resp. from the other side).

\begin{theorem}
  There is a bijection between serpent nests in $Q$ and pairs of
  serpent nests in $Q_1$ and $Q_2$.
\end{theorem}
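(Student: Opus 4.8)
The plan is to build the bijection by hand: it sends a serpent nest of $Q$ to the pair consisting of its serpents lying on one side of the bridge $s$ and those lying on the other side, and the inverse is disjoint union. Everything hinges on one structural fact that I would prove first: \emph{no serpent of $Q$ threads the bridge}. Since $s$ is a bridge, its two neighbour squares sit on opposite edges of $s$, so a path entering $s$ from one neighbour and leaving towards the other would enter and leave through opposite sides, which is impossible for a serpent. As the path of a serpent is the unique path between two squares in the tree of squares of $Q$, it meets $s$ at most once, so $s$ can occur in a serpent only as an extremity. Writing $Q\setminus s$ as the disjoint union of its two connected components, each contained in one of $Q_1,Q_2$, it follows that every serpent of $Q$ lies in exactly one of $Q_1$ and $Q_2$: if it avoids $s$ it lies inside one component; if it has an extremity at $s$, it leaves $s$ towards one of the two neighbours and then stays on that side; and it cannot lie in both, since $Q_1$ and $Q_2$ share only the square $s$ while a serpent has two distinct squares.

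Using this, I would define the forward map $\SN_Q \to \SN_{Q_1}\times\SN_{Q_2}$: choose a representing set of serpents, split it according to the side of each serpent, and pass to equivalence classes. Two points require checking. First, each half is a genuine serpent nest of the corresponding $Q_i$: a serpent contained in $Q_i$ is already a serpent of $Q_i$ (serpents see only the tree of squares), and the extra condition can only be imperilled at a square carrying two extremities; the one square shared across the splitting is $s$, but in $Q_i$ an extremity at $s$ must leave through the unique inner edge of $s$ in $Q_i$, so there is at most one such extremity and the condition, already valid in $Q$, survives. Second, well-definedness modulo the swap equivalence: every inner edge crossed by a serpent separates two squares that both belong to the same $Q_i$ (because $s$ is a cut square), hence any two serpents crossing it both lie in that $Q_i$, so a swap along that edge modifies only one of the two halves; thus the output pair depends only on the class of the input.

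The inverse map takes $(\sigma_1,\sigma_2)$, chooses representatives inside $Q_1$ and $Q_2$ (which are serpents of $Q$, the tree of squares of each $Q_i$ being a subtree of that of $Q$), and returns the class of their union. The extra condition for the union in $Q$ can fail only at a square where a serpent from $\sigma_1$ and one from $\sigma_2$ both have an extremity, and such a square must be $s$; but there the two extremities leave through the two inner edges of $s$, which are opposite edges by the bridge condition, hence different sides, so the condition holds. Well-definedness modulo equivalence is immediate, since a swap inside one half of the union is a swap in $Q$. The two maps are mutually inverse: splitting a union recovers the pair because $Q_1$-serpents and $Q_2$-serpents are disjoint families, and reassembling a splitting recovers the original nest because every serpent of it belonged to exactly one side. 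In particular the bijection is rank-additive, $\rk(\sigma)=\rk(\sigma_1)+\rk(\sigma_2)$, as it merely redistributes serpents.

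The main obstacle is the opening lemma: one must notice that the right-angle-turn constraint forbids a serpent from passing through the bridge, which is exactly what decouples the two sides. Granting that, the remainder is a careful bookkeeping check that the extra condition and the swap equivalence are compatible with the splitting, the one subtle point being that two extremities reaching $s$ from opposite sides automatically count as lying on different sides and so never clash. This is the serpent-nest counterpart of the factorisation of digraphs in \autoref{dg_prod}.
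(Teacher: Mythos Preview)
Your proof is correct and follows exactly the same line as the paper's: the key observation is that no serpent can traverse the bridge because it would have to enter and leave $s$ through opposite sides, violating the right-angle-turn condition; hence every serpent lies entirely in one of $Q_1$ or $Q_2$, and the extra condition and swap equivalence respect this splitting. The paper states this in three sentences, while you have carefully unpacked the bookkeeping (well-definedness of both maps, behaviour at $s$, compatibility with swaps), but the underlying idea is identical.
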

\begin{proof}
  In fact, no serpent can cross the bridge, by definition, because
  they must make right-angle turns at every step. Therefore every
  serpent in $Q$ is either in $Q_1$ or in $Q_2$. The extra condition
  and equivalence relation are also compatible with this decomposition.
\end{proof}

Let now $Q$ and $Q'$ be quadrangulations related by twisting along
some inner edge $e$, as defined in \ref{twisting}. Every serpent in $Q$ can
be mapped to a serpent in $Q'$ by just twisting it. This is clearly a
bijection.

\begin{theorem}
  \label{twisting_nests}
  This bijection induces a bijection between serpent nests in $Q$ and
  in $Q'$, that preserves the rank.
\end{theorem}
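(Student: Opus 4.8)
The plan is to promote the serpent-level bijection described just above the statement to the level of serpent nests, by checking compatibility with the \textbf{extra condition} and with the \textbf{equivalence relation}, after which rank preservation is immediate. Write $\phi$ for the twist acting on serpents: it fixes a serpent lying inside $Q'_e$, applies the plane reflection to a serpent lying inside $Q''_e$, and, for a serpent crossing $e$ (which happens at most once, a quadrangulation being a tree of squares), keeps its $Q'_e$-part and reflects its $Q''_e$-part. The single geometric fact I would isolate first is that the reflection permutes the four sides of any square of $Q''_e$ and, in particular, exchanges the two sides adjacent to $e$; combined with the observation that a serpent crossing $e$ leaves the incident square of $Q''_e$ through a side adjacent to $e$ (it makes a right-angle turn there), this already re-proves that $\phi$ sends serpents to serpents, and $\phi$ is invertible because the reverse twist undoes it.

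Next I would verify the extra condition, which is local to each square: two serpent extremities may share a square precisely when the two serpents exit it through different sides. Each square of $Q$ lies either in $Q'_e$, where $\phi$ changes nothing, or in $Q''_e$ (the square incident to $e$ included), where $\phi$ replaces the entire local configuration by its mirror image; since ``exiting through different sides'' is invariant under the induced permutation of sides, a set of serpents of $Q$ satisfies the extra condition if and only if its $\phi$-image does. I would then treat the equivalence relation through its cut-and-swap description. For a cut edge $f$ lying strictly inside $Q'_e$ or strictly inside $Q''_e$, the move plainly commutes with $\phi$ (in the second case after applying the reflection throughout). For $f = e$, the two serpents of $Q$ crossing $e$ are carried by $\phi$ to two serpents of $Q'$ crossing $e$ whose $Q'_e$-halves are unchanged and whose $Q''_e$-halves are reflected, and swapping the $Q''_e$-halves before or after $\phi$ gives the same outcome. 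Hence $\phi$ maps each elementary move to an elementary move, so it descends to a well-defined map $\SN_Q \to \SN_{Q'}$; applying the same reasoning to the reverse twist shows this map is a bijection.

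Finally, rank is preserved because $\phi$ is a bijection on individual serpents, so a serpent nest with $r$ serpents is sent to a configuration with $r$ serpents, and $\rk$ is by definition that count. I expect no substantial obstacle: the only point requiring care is the behaviour at the square incident to $e$ and the cut-and-swap move along $e$ itself, which the preceding paragraphs pin down, everything else being a square-by-square copy of the untwisted situation.
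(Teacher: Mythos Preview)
Your proof is correct and follows the same approach as the paper's: verify that the twist preserves the \textbf{extra condition} and is compatible with the \textbf{equivalence relation}, whence the map descends to a rank-preserving bijection on serpent nests. The paper's proof is a two-line sketch (``obviously preserved'', ``easy''), and your version simply fills in the local details; note that the equivalence check is even shorter if one uses the ``pattern of paths in each square'' description rather than the cut-and-swap moves, since a reflection manifestly sends one local pattern to another.
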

\begin{proof}
  The extra condition in the definition of serpent nests is obviously
  preserved. Compatibility with the equivalence relation is easy.
\end{proof}

For a quadrangulation $Q$, let $h_Q$ be the generating polynomial for
the rank of serpent nests:
\begin{equation}
  h_Q(x) = \sum_{\sigma \in \SN_Q} x^{\rk(\sigma)}.
\end{equation}
Abusing notation, this is called the $h$-vector.

\medskip

Let us now define a duality on serpent nests.

For this we need the following definition. A \textit{short serpent} is
a serpent whose extremities are adjacent squares. We say that a
serpent nest $\sigma$ contains a short serpent at the edge $e$ if one
set of serpents in the equivalence class of $\sigma$ contains a short
serpent at this edge. Equivalently, there are serpent extremities in
each square adjacent to $e$ that both leave their respective squares
through $e$.

Let $\sigma$ be a serpent nest. The dual $\overline{\sigma}$ is
another serpent nest, defined as follows.

Consider an edge $e$ between two squares $s$ and $s'$ of $Q$. If both
$s$ and $s'$ contains a serpent extremity leaving through $e$, remove
them both. If neither $s$ nor $s'$ contains a serpent extremity
leaving through $e$, add both. Otherwise (exactly one of $s$ or
$s'$ has a serpent extremity leaving through $e$), do nothing.

This change has to be performed for every inner edge of $Q$.

This is clearly an involution on serpent nests. This amounts to remove
all existing short serpents, and introduce some wherever possible.

\begin{proposition}
  The rank of the dual of a serpent nest $\sigma$ is $n - 1 - \rk(\sigma)$,
  where $n - 1$ is the number of inner edges of the quadrangulation.
\end{proposition}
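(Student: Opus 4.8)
The plan is to count serpent extremities leaving through each inner edge, before and after applying the duality, and to track how the rank changes locally at each edge. Recall that the rank $\rk(\sigma)$ is half the total number of serpent extremities, and that each serpent extremity sits in a square and leaves through exactly one side of that square; the only sides relevant to the duality are the $n-1$ inner edges.

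First I would set up the bookkeeping. For an inner edge $e$ between squares $s$ and $s'$, let us say $e$ has \emph{type} $0$, $1$, or $2$ according to whether $0$, $1$, or $2$ of the two squares $s,s'$ contain a serpent extremity leaving through $e$. The key observation is that the contribution of serpent extremities to the count is \emph{organised by inner edge}: since a serpent makes right-angle turns, each of its two extremities leaves its terminal square through one of the four sides, but only the inner-edge sides matter for the duality move, and the extra condition guarantees that a given square-side carries at most one extremity that leaves through it. So the total number of serpent extremities that leave through some inner edge equals $\sum_{e} (\text{type of }e)$, and the remaining extremities leave through boundary edges and are unaffected by the duality.

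Next I would analyse the effect of the duality edge by edge. By definition, $\overline{\sigma}$ is obtained by: at every type-$2$ edge, deleting both extremities (the short serpent there is removed), turning it into type $0$; at every type-$0$ edge, adding both extremities, turning it into type $2$; and at every type-$1$ edge, doing nothing. Hence the map $e \mapsto (\text{type of }e)$ transforms by $2 \leftrightarrow 0$ and $1 \mapsto 1$ under duality. The subtlety — and the main obstacle — is to verify that $\overline{\sigma}$ is genuinely a well-defined serpent nest of the claimed rank: one must check that after all these local surgeries the result still assembles into a valid collection of serpents satisfying the extra condition, and that the \emph{number of serpents} (not just the number of extremities) behaves as asserted. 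For the extremity count this is immediate: writing $n_i$ for the number of type-$i$ inner edges, the number of extremities leaving through inner edges is $2n_2 + n_1$ in $\sigma$ and $2n_0 + n_1$ in $\overline{\sigma}$, while the boundary-leaving extremities are unchanged; but I should be careful, because the claim $\rk(\overline\sigma) = n-1-\rk(\sigma)$ with $n_0+n_1+n_2 = n-1$ forces the boundary-leaving extremities to contribute nothing, i.e. every serpent extremity must leave its square through an inner edge.

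So the cleanest route is this: argue first that in any serpent nest every extremity leaves through an inner edge — equivalently, a serpent cannot terminate by pointing out of the polygon, which follows because a serpent path inside $Q$ consists of right-angle turns between square centres and its last segment goes from a square centre to the opposite of the side it entered, which is a genuine side of that square shared with another square unless... (here one uses that extremities are by definition square centres, and the relevant ``leaving side'' in the duality is determined intrinsically). Then $2\rk(\sigma) = 2n_2 + n_1$ and $2\rk(\overline\sigma) = 2n_0 + n_1$, whence $\rk(\sigma) + \rk(\overline\sigma) = n_0 + n_1 + n_2 = n-1$. I would close by remarking that well-definedness of $\overline\sigma$ as a serpent nest was already asserted when the duality was introduced (it is an involution on serpent nests), so the only real content here is the rank bookkeeping above; the main obstacle, as noted, is pinning down precisely why no serpent extremity escapes through a boundary edge, which should be settled by unwinding the definition of a serpent as a path between two square centres.
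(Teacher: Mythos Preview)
Your argument is correct and is essentially the paper's own proof, reorganised. The paper also classifies the $n-1$ inner edges by how many of the two adjacent squares carry an extremity through that edge: with $k$ serpents of which $\ell$ are short, there are $\ell$ type-$2$ edges, $2(k-\ell)$ type-$1$ edges (one per extremity of a long serpent), and $n-1-2k+\ell$ type-$0$ edges; after the $0\leftrightarrow 2$ swap one reads off $\rk(\overline\sigma)=(k-\ell)+(n-1-2k+\ell)=n-1-k$, which is exactly your computation $n_2+\tfrac{n_1}{2}+n_0+\tfrac{n_1}{2}=n-1$. Your detour about boundary edges is unnecessary: a serpent is by definition a path between two distinct square centres in the tree of squares, so each extremity leaves its terminal square through the inner edge leading to the next square on that path --- there is simply no way for an extremity to ``leave through'' a boundary edge.
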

\begin{proof}
  The quadrangulation has $n$ squares and $n-1$ inner edges.

  Consider a serpent nest $\sigma$ containing $k$ serpents among which
  $\ell$ short serpents. There are $k-\ell$ long serpents, and
  therefore $2k-2\ell$ inner edges where the duality does nothing
  (near the extremities of the long serpents). The duality changes
  something around $n-1-2k+2\ell$ inner edges. So in the dual
  $\overline{\sigma}$, there are $n-1-2k+\ell$ short
  serpents. Therefore $\overline{\sigma}$ contains $n-1-k$ serpents.
\end{proof}

This implies that the $h$-vector is palindromic:
\begin{equation}
  \label{palindromic_h}
  h_Q(x) = x^{n-1} h_Q(1/x).
\end{equation}
For the four examples of size $5$ in \autoref{taille5}, one finds the following
$h$-vectors
\begin{align}
  x^{4} + 8 x^{3} + 15 x^{2} + 8 x + 1,\\
  x^{4} + 8 x^{3} + 16 x^{2} + 8 x + 1,\\
  x^{4} + 9 x^{3} + 17 x^{2} + 9 x + 1,\\
  x^{4} + 10 x^{3} + 20 x^{2} + 10 x + 1.
\end{align}

From the description of the duality, fixed points are serpent nests
where each inner edge is adjacent to exactly one serpent extremity
that crosses this edge.

It seems that the number of fixed points is (up to sign) the value of
the $h$-vector at $x=-1$. This could be related to the Charney-Davis
conjecture about $h$-vectors of simplicial spheres, see
\cite{reiner_welker, charney_davis,leung_reiner}.

\medskip

Let us now define a refinement of the $h$-vector.

A serpent in a serpent nest $\sigma$ is called a \textit{simple
  serpent} if two conditions hold. The first condition is that its
extremities are adjacent squares. The second condition is that there
is no other serpent crossing the same edge. This is a stronger
restriction than just being a short serpent.

The $h$-triangle of a quadrangulation $Q$ is defined as the
following polynomial in two variables:
\begin{equation}
  H_Q(x,y) = \sum_{\sigma \in \SN_Q} x^{\rk(\sigma)}y^{s(\sigma)},
\end{equation}
where $s(\sigma)$ is the number of simple serpents in the serpent nest $\sigma$.

Given an inner edge $e$ in a quadrangulation $Q$, one can define two
quadrangulations by cutting $Q$ along this edge. Let us call them
$Q'_{e}$ and $Q''_{e}$. This will be called a \textit{parabolic decomposition}.

The $H$-triangle has a simple property with respect to parabolic decomposition.
\begin{proposition}
  There holds
  \begin{equation}
    y \partial_y H_{Q} = x y \sum_{e \in Q} H_{Q'_e} H_{Q''_e}.
  \end{equation}
\end{proposition}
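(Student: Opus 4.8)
The plan is to read both sides as weighted counts and exhibit a weight-preserving bijection. First note that $y\partial_y H_Q = \sum_{\sigma \in \SN_Q} s(\sigma)\,x^{\rk(\sigma)}y^{s(\sigma)}$, so the left-hand side is the generating function of pairs $(\sigma,t)$, where $\sigma\in\SN_Q$ and $t$ is a marked simple serpent of $\sigma$, each contributing $x^{\rk(\sigma)}y^{s(\sigma)}$. On the other side, $xy\sum_{e\in Q} H_{Q'_e}H_{Q''_e}$ is the generating function of triples $(e,\sigma_1,\sigma_2)$ with $e$ an inner edge of $Q$, $\sigma_1\in\SN_{Q'_e}$, $\sigma_2\in\SN_{Q''_e}$, each contributing $x^{1+\rk(\sigma_1)+\rk(\sigma_2)}y^{1+s(\sigma_1)+s(\sigma_2)}$. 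So it suffices to build a bijection between the two sets of objects under which $\rk(\sigma)=1+\rk(\sigma_1)+\rk(\sigma_2)$ and $s(\sigma)=1+s(\sigma_1)+s(\sigma_2)$.

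Next I would set up the forward map. A marked simple serpent $t$ in $\sigma$ is in particular a short serpent, so its two extremities are the two squares bordering a single inner edge $e=e(t)$ of $Q$, and by simplicity $t$ is the only serpent of $\sigma$ crossing $e$. Cut $Q$ along $e$ into $Q'_e$ and $Q''_e$; here $e$ becomes a boundary edge of each piece, and the inner edges of $Q'_e$ (resp. $Q''_e$) are exactly the inner edges of $Q$ lying on that side. Since no serpent of $\sigma$ other than $t$ crosses $e$, deleting $t$ leaves a collection of serpents each lying entirely in $Q'_e$ or entirely in $Q''_e$, yielding $\sigma_1\in\SN_{Q'_e}$ and $\sigma_2\in\SN_{Q''_e}$; the extra condition and the equivalence relation involve only one square or one edge at a time, so they localise to the two pieces. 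Set $(\sigma,t)\mapsto(e,\sigma_1,\sigma_2)$. The inverse glues $Q'_e$ and $Q''_e$ back along $e$, keeps $\sigma_1$ and $\sigma_2$, and inserts one short serpent $t$ crossing $e$; this is always legal because $e$ is a boundary edge of each piece, so neither $\sigma_1$ nor $\sigma_2$ has a serpent extremity leaving its square through $e$, whence the extra condition at the two squares bordering $e$ still holds and the inserted serpent is forced to be the unique serpent crossing $e$. These maps are visibly mutually inverse.

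It then remains to check the two weight identities. The rank identity is immediate: the forward map removes exactly one serpent. For the count of simple serpents I would argue that, apart from $t$, a serpent $u$ of $\sigma$ is simple in $\sigma$ if and only if the corresponding serpent of $\sigma_1$ or $\sigma_2$ is simple there: being a simple serpent is a condition at a single inner edge $f$ (short at $f$, and the only serpent crossing $f$), and if $u\ne t$ then $u$ does not cross $e$, so $f\ne e$, so $f$ is an inner edge of the piece containing $u$ with the same incident squares and the same set of crossing serpents as in $Q$ (no serpent from the other piece can cross $f$). Hence $s(\sigma)=1+s(\sigma_1)+s(\sigma_2)$, the extra $1$ being $t$ itself, which is simple by construction. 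Matching contributions object by object across the bijection yields the claimed identity.

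The main obstacle is exactly the bookkeeping of that last step: one must make sure cutting along $e$ neither destroys a simple serpent nor creates a spurious one, and that the reinserted short serpent at $e$ is genuinely new and genuinely simple. Both rest on two elementary structural facts — $e$ becomes a boundary edge of each piece, and ``simple serpent at $f$'' is a purely local condition at the edge $f$ — so once these are recorded, the verification is routine.
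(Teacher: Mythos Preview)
Your proof is correct and follows essentially the same approach as the paper: interpret the left side as serpent nests with a marked simple serpent, the right side as pairs of serpent nests in the two pieces, and realise the bijection by cutting along the unique edge crossed by the marked simple serpent. The paper's proof is a two-sentence sketch of exactly this; your version fills in the bookkeeping (well-definedness across the cut, the rank and simple-serpent weight identities, and why the reinserted short serpent is simple) that the paper leaves implicit.
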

\begin{proof}
  The left hand side is counting pairs made of a serpent nest in $Q$
  with a marked simple serpent. Each term of the right hand side is
  counting pairs of serpent nests in $Q'_{e}$ and $Q''_{e}$. The
  equality is given by the bijection that cuts along the unique edge
  crossed by the simple serpent.
\end{proof}

There seems to be a close relation with the $F$-triangle defined in
\autoref{ftriangle}. Let $n$ be the number of inner edges.
\begin{conjecture}
  There holds the equation
  \begin{equation}
    \label{f_to_h}
    H_Q(x,y) = (x-1)^n F_Q\left(\frac{1}{x-1},\frac{1+(y-1)x}{x-1}\right).
  \end{equation}
\end{conjecture}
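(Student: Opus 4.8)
## Proof plan for the conjectural $F$-to-$H$ relation

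The plan is to prove the identity by induction on the number $n$ of inner edges of $Q$, using the two parabolic-decomposition formulas already established: Proposition~\ref{parabo_F} for the $F$-triangle and the analogous statement for the $H$-triangle proved just above. The base case $n=0$ (a single square) is trivial, since both sides equal $1$. For the inductive step, the idea is \emph{not} to compare $F_Q$ and $H_Q$ directly — both are complicated sums over simplices resp.\ serpent nests — but instead to show that the right-hand side of \eqref{f_to_h}, call it $\widetilde H_Q(x,y)$, satisfies the \emph{same} parabolic recursion and the \emph{same} bridge-factorisation as $H_Q$, together with enough normalisation data (the value at a suitable specialisation, or the top-degree and constant coefficients) to pin it down uniquely. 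Concretely one would substitute the change of variables $u = \frac{1}{x-1}$, $v = \frac{1+(y-1)x}{x-1}$ into $F_Q$, apply $y\partial_y$ to $\widetilde H_Q$, and check — by the chain rule and Proposition~\ref{parabo_F} — that $y\partial_y \widetilde H_Q = xy\sum_{e\in Q}\widetilde H_{Q'_e}\widetilde H_{Q''_e}$, matching the $H$-triangle recursion. This requires a clean bookkeeping lemma: under the substitution, $y\partial_y$ acting on $\widetilde H$ must be related to $v\partial_v$ (or $y\partial_y$) acting on $F$ by an explicit Jacobian factor, and one must verify that the factor $(x-1)^n$ distributes correctly as $(x-1)^{n_1}(x-1)^{n_2}$ across a cut (where $n_1+n_2 = n-1$, since cutting along one edge destroys exactly one inner edge), producing precisely the extra $(x-1)$ that, after the substitution, becomes the prefactor $xy$ versus the $y$ in Proposition~\ref{parabo_F}.

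The two recursions alone determine the triangles only up to the ``boundary data'' of the induction, so a second ingredient is needed. I would pin this down using the palindromy \eqref{palindromic_h} of the $h$-vector together with the spherical symmetry of $F_Q$ from Proposition~\ref{F_sym}: setting $y=x$ in \eqref{f_to_h} should reduce $H_Q(x,x)$ to $h_Q(x)$ (a serpent nest with all of its short serpents necessarily simple would be counted once — but in general $s(\sigma)\le \ell(\sigma)$, so one must be careful), and one checks that the right side $(x-1)^nF_Q\!\big(\tfrac{1}{x-1},\tfrac{1}{x-1}\big)$ is exactly the standard $f$-to-$h$ transform of the $f$-vector of $G_Q$. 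Then Proposition~\ref{F_sym} translates into the palindromy of $h_Q$, and conversely this consistency check, combined with agreement of the two parabolic recursions, should be enough: since any quadrangulation with $n\ge 1$ inner edges has at least one inner edge to cut along, the recursion expresses both $H_Q$ and $\widetilde H_Q$ in terms of strictly smaller quadrangulations, and they agree there by induction, hence agree for $Q$ provided the recursion determines the polynomial uniquely from the pieces. That last point — uniqueness — needs a short argument: $y\partial_y$ kills the $y^0$-term, so one additionally has to identify the coefficient of $y^0$, i.e.\ $H_Q(x,0) = \sum_{\sigma:\,s(\sigma)=0} x^{\rk(\sigma)}$, with $\widetilde H_Q(x,0)$; this is where one invokes a direct bijective or generating-function argument for serpent nests with no simple serpent, matched against $F_Q$ restricted to simplices avoiding the relevant structure.

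The main obstacle, as I see it, is precisely the combinatorial meaning of the ``simple serpent'' statistic $s(\sigma)$ and its match with the $F$-triangle side. The parabolic recursions handle the ``long'' part of the structure cleanly, but the $y=0$ boundary term and the exact correspondence between $\{$short but not simple serpents$\}$ and the combinatorics of non-maximal simplices in $G_Q$ is delicate — a short serpent at $e$ fails to be simple exactly when another serpent also crosses $e$, and encoding this multiplicity against the flag structure of $G_Q$ is the crux. One natural route is to set up a direct bijection: decompose a serpent nest by the set $E_0\subseteq$ (inner edges) of edges carrying a simple serpent, contract those edges, and argue that what remains is a serpent nest on the smaller quadrangulation $Q/E_0$ with \emph{no} simple serpents, while on the $F$-side the corresponding operation is passing to a link in $G_Q$; then \eqref{f_to_h} would follow from the $y=0$ case by a binomial-type summation over $E_0$. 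I would expect the $y=0$ case itself — equivalently, the identity $H_Q(x,0) = (x-1)^nF_Q\big(\tfrac{1}{x-1},\tfrac{1}{x-1} - \tfrac{x}{x-1}+\tfrac{x}{x-1}\big)$ suitably evaluated — to still be genuinely hard and to require either the spherical (Cohen–Macaulay) structure of $G_Q$ in an essential way, or a clever sign-reversing involution; this is likely why the statement is left as a conjecture.
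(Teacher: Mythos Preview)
The statement is presented in the paper as a \emph{conjecture}, with no proof offered; the paper explicitly parallels it with the analogous $F$/$H$ relation in Coxeter--Catalan combinatorics, which was itself open for years before being settled. So there is no paper-proof to compare against, and the honest summary is that your plan does not close the gap either.

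Your inductive scaffolding is correct in one respect: the chain-rule check really works, and $\widetilde H_Q := (x-1)^n F_Q\bigl(\tfrac{1}{x-1},\tfrac{1+(y-1)x}{x-1}\bigr)$ does satisfy the same recursion $y\,\partial_y \widetilde H_Q = xy\sum_e \widetilde H_{Q'_e}\widetilde H_{Q''_e}$ as $H_Q$ (cutting along $e$ drops the inner-edge count by one, so $(x-1)^{n_1+n_2}=(x-1)^{n-1}$, and the Jacobian $\partial v/\partial y = x/(x-1)$ supplies the missing factor of $x$). But, as you yourself note, $y\,\partial_y$ kills the $y^0$-term, so this recursion determines nothing about the constant-in-$y$ part, and that is where the whole difficulty lies. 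There is also a slip in your normalisation step: the specialisation that turns the right-hand side into the standard $f$-to-$h$ transform is $y=1$, not $y=x$ (at $y=1$ the second argument becomes $\tfrac{1}{x-1}$ and one obtains $h_Q(x)=(x-1)^n f_{G_Q}\bigl(\tfrac{1}{x-1}\bigr)$, whereas $H_Q(x,x)\neq h_Q(x)$ in general). More seriously, even this one-variable consequence is not established in the paper --- the bare equality $\#\SN_Q = \#\{Q\text{-compatible quadrangulations}\}$ is itself only conjectural there --- so invoking it as boundary data would be circular. Your alternative, matching $H_Q(x,0)$ against $(x-1)^n F_Q\bigl(\tfrac{1}{x-1},-1\bigr)$ by a bijection or sign-reversing involution, is a plausible angle of attack but is not carried out, and there is no evidence it is easier than the full conjecture.
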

This would imply in particular a relation between the $f$-vector and
the $h$-vector, that is exactly the usual relation defining the
$h$-vector from the $f$-vector for simplicial complexes.

Given prop. \ref{F_sym}, this would imply the following symmetry.
\begin{conjecture}
  There holds
  \begin{equation}
    H_Q(x,y) = x^n H_Q(1/x,1-x+ x y).
  \end{equation}
\end{conjecture}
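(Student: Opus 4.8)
The plan is to obtain this symmetry as a purely formal consequence of the conjectured identity \eqref{f_to_h} together with the spherical symmetry of Proposition \ref{F_sym}; indeed the statement is asserted precisely as an implication of those two facts, so what has to be checked is that the substitution $(x,y)\mapsto(1/x,\,1-x+xy)$ acting on $H_Q$ corresponds, through the change of variables appearing in \eqref{f_to_h}, to the involution $(x,y)\mapsto(-1-x,\,-1-y)$ that appears in Proposition \ref{F_sym}.

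Concretely, I would write $u=\tfrac{1}{x-1}$ and $v=\tfrac{1+(y-1)x}{x-1}$, so that \eqref{f_to_h} reads $H_Q(x,y)=(x-1)^n F_Q(u,v)$, and then evaluate the right-hand side of the claimed identity, namely $x^n H_Q(1/x,\,1-x+xy)$, by applying \eqref{f_to_h} with $x$ replaced by $1/x$ and $y$ by $1-x+xy$. The first step is to simplify the two arguments of $F_Q$: a short computation gives $\tfrac{1}{(1/x)-1}=\tfrac{-x}{x-1}$, and, after the cancellation $1+\bigl((1-x+xy)-1\bigr)(1/x)=1+(y-1)=y$, also $\tfrac{1+((1-x+xy)-1)(1/x)}{(1/x)-1}=\tfrac{-xy}{x-1}$. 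One then checks that these two values are exactly $-1-u$ and $-1-v$. The second step is bookkeeping of prefactors: $\bigl((1/x)-1\bigr)^n=(-1)^n(x-1)^n x^{-n}$, so the external $x^n$ cancels $x^{-n}$ and one is left with $(-1)^n(x-1)^n F_Q(-1-u,-1-v)$; applying Proposition \ref{F_sym} turns $F_Q(-1-u,-1-v)$ into $(-1)^n F_Q(u,v)$, the two factors $(-1)^n$ cancel, and the remaining $(x-1)^n F_Q(u,v)$ is $H_Q(x,y)$ by \eqref{f_to_h}.

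The point that needs care — and the most likely place for a sign slip — is the matching of the two exponents both written ``$n$'': in \eqref{f_to_h} the exponent is the number of inner edges, whereas Proposition \ref{F_sym} was stated with $n$ the number of squares, and these differ by one. For the cancellation above to be exact one must read the symmetry of $F_Q$ with exponent equal to the number of inner edges (equivalently, one plus the dimension of the sphere $G_Q$), which is indeed the exponent occurring in \eqref{f_to_h}; with that alignment the computation closes and yields $H_Q(x,y)=x^n H_Q(1/x,\,1-x+xy)$.

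Since \eqref{f_to_h} is itself still conjectural, this deduction is only conditional, and I expect the genuinely hard part to be an unconditional proof. For that one would presumably need a direct combinatorial model: either an involution on serpent nests refining the rank-duality that already gives the palindromy \eqref{palindromic_h} but now keeping track of the number of simple serpents, or a bijection between serpent nests and faces of $G_Q$ carrying the pair $(\rk,s)$ to the relevant pair of face statistics — which would in effect establish \eqref{f_to_h}, and hence this symmetry, in one stroke. The formal deduction above, by contrast, is routine algebra once \eqref{f_to_h} is granted.
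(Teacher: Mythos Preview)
Your proposal is correct and matches exactly what the paper does: the statement is left as a conjecture, and the paper merely asserts (without writing out the algebra) that it is a formal consequence of Proposition~\ref{F_sym} together with the conjectured relation~\eqref{f_to_h}. Your computation fills in that deduction, and you are right to flag the clash of conventions on~$n$: the exponent must be the number of inner edges (as redeclared just before~\eqref{f_to_h}) rather than the number of squares used earlier, and indeed a one-square check of Proposition~\ref{F_sym} shows that its~$n$ should be read the same way for the signs to cancel.
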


The very same relationship \eqref{f_to_h} between $F$-triangles and
$H$-triangles has first been conjectured in \cite{chap_enum} in the
context of Catalan combinatorics of finite Weyl groups and Coxeter
groups. In this case, the $F$-triangle is related to cluster algebras
and the $H$-triangle to nonnesting partitions. A similar formula was
conjectured in \cite{chap_autre} to relate the $H$-triangle to a third
triangle, called the $M$-triangle, defined using the Möbius numbers of
the lattices of noncrossing partitions.

These conjectures have later been generalised in
\cite[Ch. 5]{armstrong_memoir} to the Fuss-Catalan combinatorics of
Coxeter groups, where one more parameter $m$ appears. The interested
reader can find a lot of material on this subject in this reference.

This story is now completely proved, but maybe not fully understood, after
the works of several authors, see \cite{ath07, kra06a, kra06b, tza08} and
\cite{thiel_long}.

In fact, all the statements proved about these triangles in the
Catalan setting seems to extend to our current context, where the
initial data is a quadrangulation instead of a Coxeter type. The
$M$-triangle is missing, as there is no known analogue of the
noncrossing partitions here. Note that the symmetry above gets a nice
explication in the Catalan world by the self-duality of the lattices
of noncrossing partitions.

\subsection{Examples}

Recall the quadrangulations $C_{n}$ introduced in \autoref{expl_cata}, with
inner edges
\begin{equation}
  (0,3),(0,5),\dots,(0,2n-1)
\end{equation}
inside the polygon with $2n+2$ vertices, see \autoref{catalan}.

In this case, every pair of squares define a serpent and every serpent
nest is uniquely determined by the set of serpent extremities. In one
square, there can be at most two serpent extremities.

Recall that a nonnesting partition of the set $\{1,\dots,n-1\}$ is a set
of segments $(i,j)$ with $1\leq i \leq j \leq n$, such that no segment
contains another one.

By looking at serpent extremities as opening or closing parentheses,
one can find a simple bijection between serpent nests and nonnesting
partitions of $\{1,\dots,n-1\}$, that maps the number of serpents to
the number of segments.

It follows that the number of serpent nests inside $C_n$ is the
Catalan number and the $h$-vector of $C_n$ is given by the Narayana numbers.

\medskip

Another nice family of examples is given by the quadrangulations of
the general shape displayed in \autoref{lucas}, namely a linear
string of $n + 1$ squares, with one square attached below each of them
excepted the two extremes. Let us denote by $L_n$ this
quadrangulation, which has $2n$ squares. We call them the Lucas
family, for reasons explained below. Let us also denote by $K_n$ the
quadrangulation obtained from $L_n$ by removing the rightmost square
in the linear string of $n+1$ squares.
\begin{figure}[ht]
  \begin{center}
    \includegraphics[height=2cm]{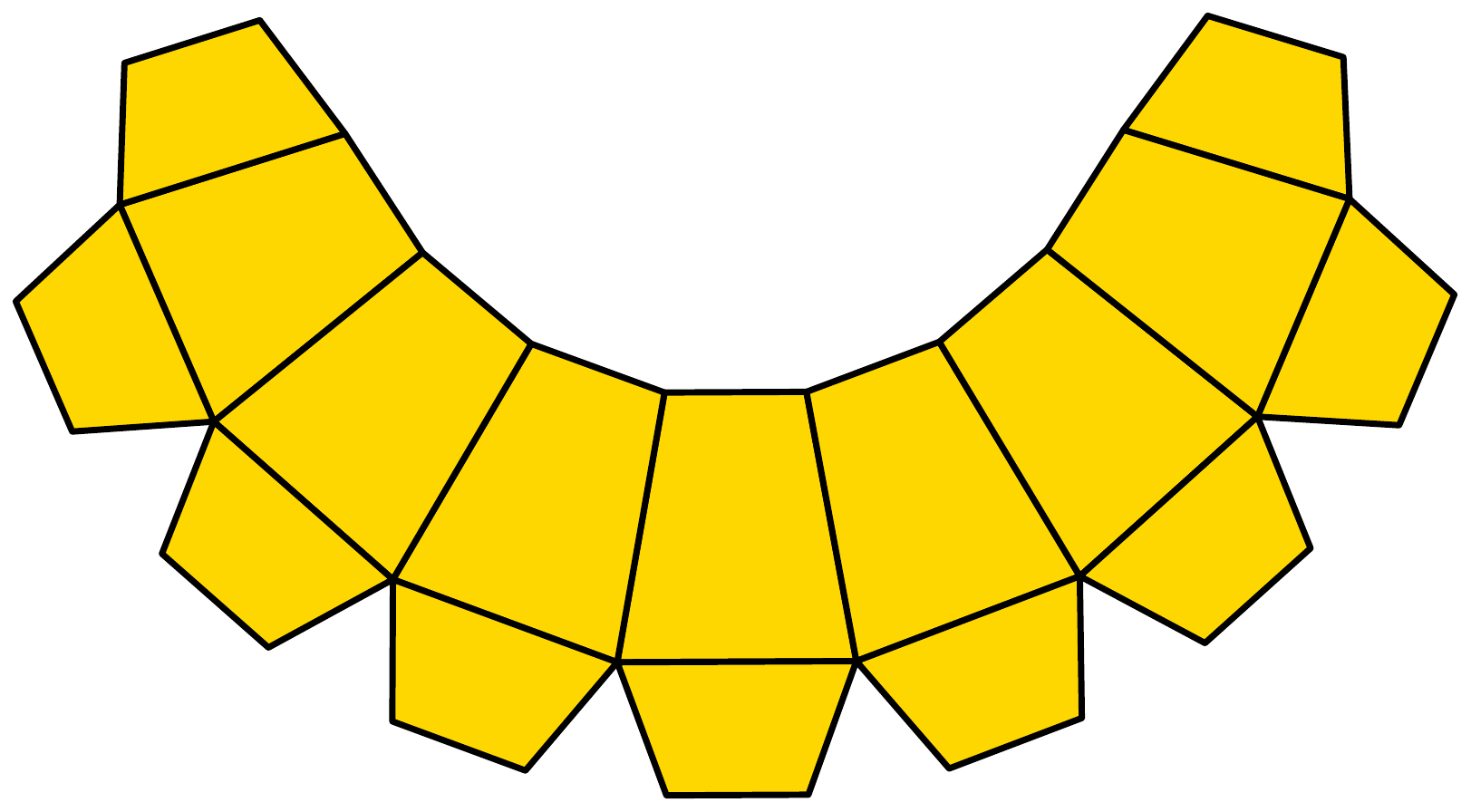}\quad\includegraphics[height=2cm]{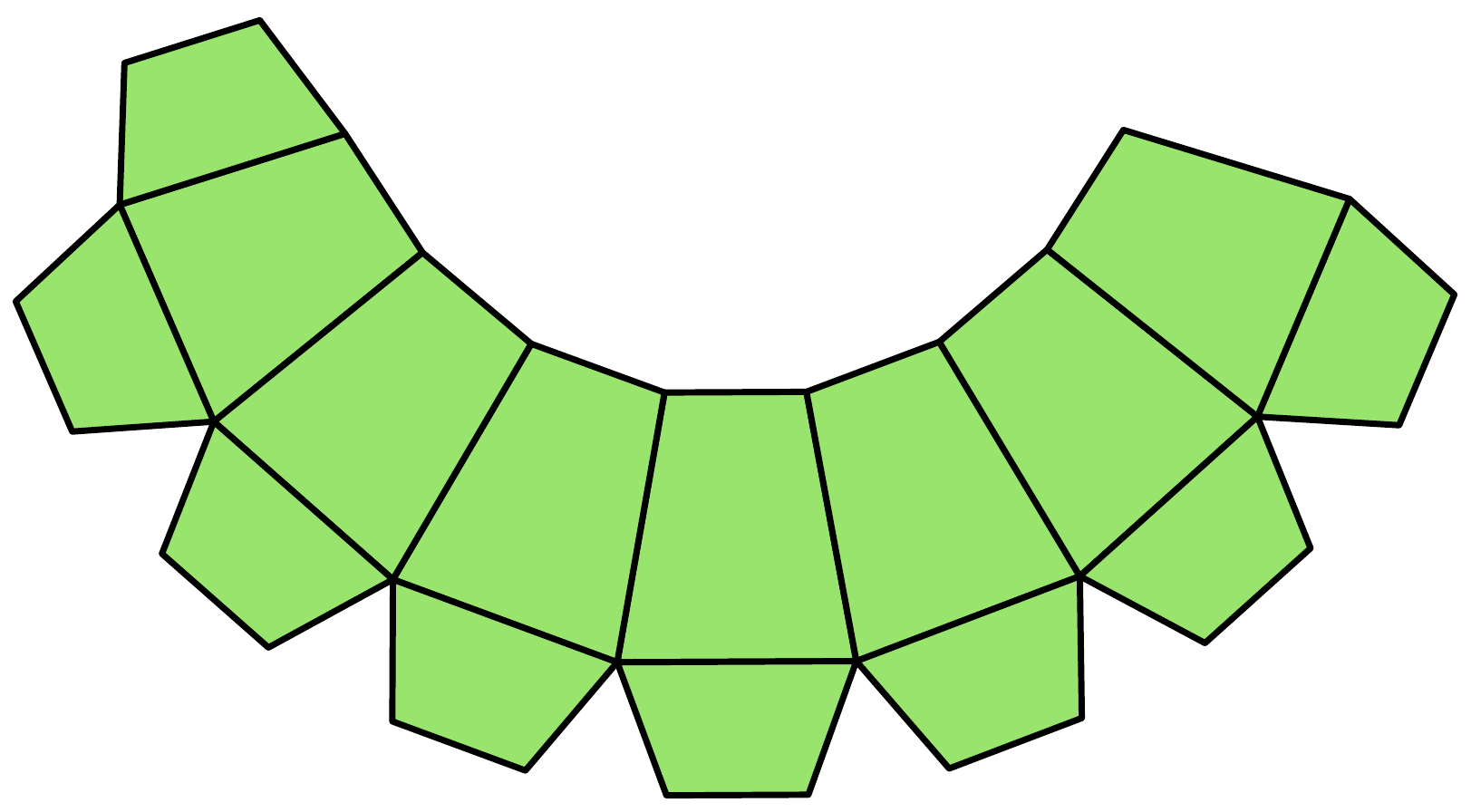}
    \caption{The quadrangulation $L_{8}$ of the Lucas family and the quadrangulation $K_8$}
    \label{lucas}
  \end{center}
\end{figure}

Let us denote by $\ell_n$ and $k_n$ the $h$-vectors $h_{L_n}$ and
$h_{K_n}$. Recall that they are polynomials in $x$ counting the
serpent nests according to their rank.

\begin{proposition}
  The sequence $\ell_n$ satisfies the recurrence
  \begin{equation}
   \ell_{n+2} = (1 + 4 x + x^2) \ell_{n+1} + x(1+x+x^2) \ell_n,
  \end{equation}
  with initial conditions $\ell_0= 0$ and $\ell_1=1+x$.
\end{proposition}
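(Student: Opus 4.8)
The plan is to prove the recurrence by coupling $\ell_n$ with two companion sequences built from $K_n$ and then eliminating. In a serpent nest of $K_n$, call the pendant square hanging below the \emph{last} square of the linear string the \emph{tail leg}, and split $k_n = h_{K_n}$ as $a_n + b_n$, where $a_n$ (respectively $b_n$) is the sum of $x^{\rk(\sigma)}$ over the serpent nests $\sigma$ of $K_n$ in which the tail leg carries no serpent extremity (respectively carries one). Now $L_n$ is obtained from $K_n$ by grafting one more string square $v$ at the right end, and $v$ meets a single inner edge, so $v$ is an extremity of at most one serpent of $L_n$. Hence a serpent nest of $L_n$ is described by its restriction to $K_n$ together with a choice of: nothing; or the short serpent from $v$ to the last string square of $K_n$ (always admissible); or the serpent from $v$ to the tail leg of $K_n$ (admissible precisely when the tail leg is free). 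Checking that these three choices give pairwise inequivalent serpent nests yields
\begin{equation}
  \ell_n = (1 + 2x)\, a_n + (1+x)\, b_n .
\end{equation}

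Next I would carry out the same analysis for the passage $K_n \to K_{n+1}$, where one grafts a new string square $v$ together with its own pendant leg $w$. The only feature of a $K_n$-nest relevant to this extension is whether its tail leg is free, since the right side of the last string square of $K_n$ lies on the boundary and so never obstructs anything; this is why two states suffice. The serpents that can meet $\{v,w\}$ fall into five types: the short serpent between $v$ and $w$; the serpents from $v$ and from $w$ to the old last string square; and the serpents from $v$ and from $w$ to the old tail leg, the last two being available only when that tail leg is free. Listing the admissible subsets of these serpents, subject to the extra condition (at most one extremity in a leg square, and at most one on each side of $v$), and then identifying those subsets that describe the same serpent nest — the swap move across the edge joining $v$ to the string collapses exactly one pair of choices — should give, for $n \ge 1$,
\begin{equation}
  a_{n+1} = (1+2x)\, a_n + (1+x)\, b_n, \qquad b_{n+1} = 3x(1+x)\, a_n + x(2+x)\, b_n ,
\end{equation}
with the natural initialisation $a_1 = 0$, $b_1 = 1$. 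The first identity reproduces the formula for $\ell_n$ above, so in fact $\ell_n = a_{n+1}$, which is both a consistency check and visible directly (deleting the tail leg of $K_{n+1}$ produces $L_n$).

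Finally I would eliminate the companion sequences. Substituting $a_n = \ell_{n-1}$ and $b_n = (1+x)^{-1}\bigl(\ell_n - (1+2x)\ell_{n-1}\bigr)$ into the recursion for $b_{n+1}$, clearing the factor $1+x$, and collecting terms — a short polynomial computation — gives $\ell_{n+1} = (1+4x+x^2)\ell_n + x(1+x+x^2)\ell_{n-1}$, which is the claimed recurrence after a shift of index. The initial values are then verified by hand: $L_1$ consists of two squares sharing an edge, whose only serpent nests are the empty one and the single serpent, so $\ell_1 = 1+x$; $\ell_0 = 0$ is the convention for the degenerate case; and one checks $\ell_2 = 1 + 5x + 5x^2 + x^3$ directly (here $L_2$ is the four-square star) against the recurrence to be sure the whole system is correctly seeded.

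The real obstacle is the transfer step of the second paragraph: pinning down the transition polynomials demands a careful, exhaustive enumeration of the local configurations at $\{v,w\}$ together with their effect on the old last string square and the old tail leg, and in particular noticing the single coincidence forced by the swap equivalence — two a priori distinct sets of new serpents that induce the same path pattern in every square. Overlooking it changes the coefficient of $b_{n+1}$ and the final recurrence breaks. By contrast, the structural input that legitimises the two-state reduction — every serpent of $L_n$ or $K_n$ is confined to at most two consecutive string columns, because consecutive string squares are glued along opposite edges and hence cannot be crossed straight — is elementary, and the concluding elimination is routine algebra.
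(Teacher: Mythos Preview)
Your argument is correct and follows the same strategy as the paper: set up a coupled linear recursion tied to the auxiliary quadrangulation $K_n$, then eliminate. The only difference is bookkeeping: the paper couples $\ell_n$ with $k_n=h_{K_n}$ directly, obtaining
\[
\ell_n=(1+x)k_n+x\,\ell_{n-1},\qquad k_n=(1+x)\ell_{n-1}+x\,k_{n-1}+x(1+x)\ell_{n-2},
\]
whereas you refine $k_n=a_n+b_n$ according to the state of the tail leg and run a $2\times 2$ transfer matrix; since your own observation $a_n=\ell_{n-1}$ (hence $b_n=k_n-\ell_{n-1}$) converts one system into the other, the two are the same computation in different coordinates.
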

\begin{proof}
  One can obtain the following coupled recursions for $\ell_n$ and
  $k_n$ by a combinatorial reasoning on serpent nests:
  \begin{equation}
    \ell_n = (1+x) k_n + x \ell_{n-1} \quad \text{and}\quad 
    k_n = (1+x) \ell_{n-1} + x k_{n-1} + x (1 + x) \ell_{n-2}.
  \end{equation}
  By elimination, one finds the required formula.
\end{proof}
With more care, a similar recursion can be found for the $H$-triangles.

It follows that the $\ell_n$ form a Lucas sequence, and therefore that there exist polynomials $(Z_n)_{n \geq 1}$ such that
\begin{equation}
  \ell_n = \prod_{d | n} Z_d,
\end{equation}
for all $n \geq 1$. Apparently, the $Z_n$ have positive
coefficients. There is no combinatorial explanation of this
factorisation so far. One may speculate that this could be explained
by some generalisations of noncrossing partitions lattices.

The first few numbers of serpent nests in $L_n$ are
\begin{equation}
  2, 12, 78, 504, 3258, 21060, 136134, 879984, 5688306,\dots
\end{equation}
(twice sequence \href{https://oeis.org/A090018}{A090018} of \verb?oeis.org?) and the first few values of $Z_n$ at $x=1$ are
\begin{equation}
  2, 6, 39, 42, 1629, 45, 68067, 1746, 72927,
 1881, 118841823, 1737, 4965759189, \dots
\end{equation}

\subsection{Counting serpent nests}

A natural question to ask is, given a quadrangulation, how many
serpent nests does it afford ? Even better, one would like to know the
$h$-vector that counts serpent nests according to their rank.

This does not seem to be easy. There is nevertheless an idea that
allows to compute these numbers or polynomials for some large
quadrangulations. It consists of 
\begin{enumerate}
\item cutting a quadrangulation along one edge,
\item counting what could be called open serpent nests in both half-quadrangulations
\item  and gluing the results back.
\end{enumerate}

Let us call \textit{open quadrangulation} a quadrangulation together
with a distinguished boundary edge, that will be considered to be
open. An \textit{open serpent nest} inside an open quadrangulation is
defined in the same way, except that now some of the serpents can end
on the open edge. They are considered up to the same equivalence
relation as serpent nests. Clearly, there is a finite number of such
configurations.

The rank $\rk(\sigma)$ of an open serpent nest $\sigma$ is the number
of (closed) serpents. Let $\op(\sigma)$ be the number of open serpents
in $\sigma$, namely those that end on the open edge.

Given an open quadrangulation $Q$, define its \textit{open $h$-vector} as
\begin{equation}
  h_Q(x,t) = \sum_{\sigma \in \SN_Q} x^{\rk(\sigma)} t^{\op(\sigma)}.
\end{equation}

One will glue back two open $h$-vectors to get an $h$-vector. The
gluing is defined by the following rule. Let $A$ and $B$ be two
polynomials in $x$ and $t$. Define a polynomial $A\#B$ in $x$ by
\begin{equation}
  A \# B = \sum_{k\geq 0} A_k B_k x^k,
\end{equation}
where $A_k$ and $B_k$ are the coefficients of $t^k$ in $A$ and $B$
(when considered as polynomials in $t$ with coefficients in $\ZZ[x]$).

Let $Q$ be a quadrangulation, $e$ be an inner edge of $Q$. Let $Q'_e$
and $Q''_e$ be the open quadrangulations obtained by cutting $Q$ along
the edge $e$.
\begin{proposition}
  The $h$-vector of $Q$ is given by $ h_{Q} = h_{Q'_e} \# h_{Q''_e}$.
\end{proposition}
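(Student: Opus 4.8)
The plan is to exhibit a rank-compatible bijection between $\SN_Q$ and the set of pairs $(\sigma',\sigma'')$ with $\sigma'\in\SN_{Q'_e}$, $\sigma''\in\SN_{Q''_e}$ and $\op(\sigma')=\op(\sigma'')$, and then to recognise the resulting identity of generating functions as the product $\#$.

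First I would use the tree structure of quadrangulations: removing the inner edge $e$ disconnects the tree of squares of $Q$ into the squares of $Q'_e$ and those of $Q''_e$, and the unique path between two squares meets $e$ at most once. Hence in a serpent nest $\sigma$, each serpent is either contained in $Q'_e$, or contained in $Q''_e$, or crosses $e$ exactly once; in the last case, cutting it at $e$ yields one open serpent on each side (a trivial one if an extremity already sits in the square adjacent to $e$), and the two arcs meeting $e$ do make right-angle turns there, so each half is a genuine open serpent nest $\sigma'$, $\sigma''$. If $k$ serpents of $\sigma$ cross $e$, then $\op(\sigma')=\op(\sigma'')=k$ and $\rk(\sigma')+\rk(\sigma'')+k=\rk(\sigma)$. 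The extra condition is a condition on pairs of extremities lying in a common square, hence is inherited by each half (extremities pushed onto the open edge $e$ are unconstrained), so $(\sigma',\sigma'')$ is well-defined as a pair of open serpent nests.

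The hard part will be the compatibility with the equivalence relation and the bijectivity. For the equivalence relation: a swap move of $Q$ along an edge $f\neq e$ is a swap move inside whichever half contains $f$, while a swap move along $e$ itself alters no per-square pattern in either half, so the pair $(\sigma',\sigma'')$ depends only on the class of $\sigma$. Conversely, given $\sigma'$ and $\sigma''$ with $\op(\sigma')=\op(\sigma'')=k$, glue them by choosing any bijection between the $k$ arcs reaching $e$ from the $Q'_e$-side and the $k$ arcs reaching it from the $Q''_e$-side; one checks the glued arcs again make right-angle turns at $e$ and that the extra condition survives (the arcs crossing $e$ are not extremities, and there is at most one centre-type extremity leaving a given square through $e$), so the result is a serpent nest of $Q$; different choices of bijection differ by swap moves along $e$, so the class is well-defined, and this is manifestly a two-sided inverse of the previous map. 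Summing $x^{\rk(\sigma)}$ over $\SN_Q$ through this bijection gives
\begin{equation}
  h_Q(x)=\sum_{k\geq 0} x^{k}\Bigl(\sum_{\op(\sigma')=k} x^{\rk(\sigma')}\Bigr)\Bigl(\sum_{\op(\sigma'')=k} x^{\rk(\sigma'')}\Bigr)=\sum_{k\geq 0}(h_{Q'_e})_k\,(h_{Q''_e})_k\,x^{k},
\end{equation}
which is precisely $h_{Q'_e}\# h_{Q''_e}$.
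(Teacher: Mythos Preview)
Your proof is correct and follows the same approach as the paper: both arguments rest on the bijection obtained by cutting every serpent that crosses $e$, matching serpent nests in $Q$ with pairs of open serpent nests in $Q'_e$ and $Q''_e$ having the same number of open strands, and then reading off the generating-function identity. The paper compresses all of this into a single sentence, whereas you spell out the rank bookkeeping $\rk(\sigma)=\rk(\sigma')+\rk(\sigma'')+k$, the compatibility with the swap equivalence (in particular that swaps along $e$ itself are exactly the ambiguity in the gluing bijection), and the survival of the extra condition under cutting and gluing; these are precisely the details one would want to see and they are handled correctly.
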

\begin{proof}
  Cutting along $e$ gives a bijection between serpent nests in $Q$ and
  pairs of open serpent nests in $Q'_e$ and $Q''_e$ with the same number of
  open serpents. This implies the formula.
\end{proof}

To give just a very simple example, consider the quadrangulation of an
hexagon. Its $h$-vector is $1+x$. On the other hand, for the open
quadrangulation of a square, one find that the open $h$-vector is
$1+t$. One can recover $1+x$ as $(1+t) \# (1+t)$.

This method allows for example to compute the number of serpents nests
for quadrangulations made by gluing a large open Catalan-type
quadrangulation (or a large open Lucas-type one) to any small open
quadrangulation.

\section{Parabolic algebras of quadrangulations and cross-trees}

\label{parabo_section}

Consider the free commutative algebra $\mathcal{F}$ generated by all
quadrangulations (considered up to rotation). Let $\mathcal{A}$ be its quotient
by the relations
\begin{equation}
  Q = Q_1 Q_2
\end{equation}
when there is a bridge $s$ in a quadrangulation $Q$ defining two
quadrangulations $Q_1$ and $Q_2$ as in \autoref{prop_bridge}.

Then $\mathcal{A}$ is a graded commutative algebra, where the degree
is the number of inner edges.

On this algebra, let us define an operator $\partial : \mathcal{A}
\to \mathcal{A}$. It is given on a quadrangulation $Q$ by
\begin{equation}
  \partial Q = \sum_{e \in Q} {Q'_e Q''_e}
\end{equation}
where the right hand side is the product of the two pieces obtained by
cutting along the edge $e$. This extends uniquely as a derivation of
the free commutative algebra $\mathcal{F}$, which then descends to the
quotient algebra $\mathcal{A}$.

The operator $\partial$ is a derivation with respect to the product,
and homogeneous of degree $-1$.

Let $\Delta$ be the operator $\exp(\partial)$. This is a well-defined
automorphism of $\mathcal{A}$. It does not preserve the grading, but
does preserve an increasing filtration by the number of inner edges.

Recall the $F$-triangle of a quadrangulation was defined in \ref{ftriangle} as
a polynomial in $x,y$.
\begin{proposition}
  The $F$-triangle is a morphism of commutative algebra from
  $\mathcal{A}$ to $\ZZ[x,y]$. It sends the derivation $\partial$ to
  $\partial_y$ and $\Delta$ to the substitution operator $y \mapsto y+1$.
\end{proposition}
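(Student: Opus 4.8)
The plan is to verify the three claims in turn, all of which reduce to identities already established or to elementary generating-function bookkeeping. First I would check that $F$ is a well-defined algebra morphism from $\mathcal{A}$ to $\ZZ[x,y]$. On the free algebra $\mathcal{F}$ one simply declares $F$ to be multiplicative on the generators, so the only thing to verify is that $F$ kills the defining relations $Q = Q_1 Q_2$ coming from a bridge $s$. But this is exactly the content of the second half of Proposition \ref{simplicial_prod}, which says $F_Q = F_{Q_1} F_{Q_2}$ in the presence of a bridge. Hence $F$ descends to $\mathcal{A}$ and is a ring homomorphism by construction.

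Next I would show that $F$ intertwines $\partial$ with $\partial_y$. Since $\partial$ is a derivation of $\mathcal{F}$ and $\partial_y$ is a derivation of $\ZZ[x,y]$, and $F$ is an algebra map, it suffices to check the commutation $F(\partial Q) = \partial_y F(Q)$ on generators $Q$ (a derivation is determined by its values on algebra generators, and both $\partial_y \circ F$ and $F \circ \partial$ are derivations $\mathcal{F} \to \ZZ[x,y]$ relative to the $\mathcal{F}$-module structure pulled back along $F$). By definition $F(\partial Q) = \sum_{e \in Q} F_{Q'_e} F_{Q''_e}$, while $\partial_y F_Q = \partial_y F_Q$. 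Dividing out, this is precisely the identity $\partial_y F_Q = \sum_{e \in Q} F_{Q^-_e} F_{Q^+_e}$ of Proposition \ref{parabo_F} (after cancelling the common factor $y$, noting $y\partial_y$ there records the marked initial inner edge). So this claim is a direct restatement of an already-proved proposition.

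Finally, for $\Delta = \exp(\partial)$, the statement follows formally from the previous one. Since $F \circ \partial = \partial_y \circ F$, one gets $F \circ \partial^k = \partial_y^k \circ F$ for all $k$ by induction, hence $F \circ \exp(\partial) = \exp(\partial_y) \circ F$ as operators (the exponential series converges because $\partial$ is homogeneous of degree $-1$, so applied to any fixed element only finitely many terms survive; likewise on the polynomial side). It remains only to identify $\exp(\partial_y)$ on $\ZZ[x,y]$ with the substitution $y \mapsto y+1$, which is the classical Taylor-shift identity $\exp(t\partial_y) g(y) = g(y+t)$ specialised at $t=1$.

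I do not expect any serious obstacle here: every step is either a citation of an earlier proposition (Propositions \ref{simplicial_prod} and \ref{parabo_F}) or a formal manipulation of derivations and their exponentials. The one point that needs a word of care is the convergence of $\exp(\partial)$ on $\mathcal{A}$, but this was already asserted when $\Delta$ was introduced (the filtration by number of inner edges is respected and $\partial$ strictly lowers degree), so nilpotence on each graded piece makes the series a finite sum. Thus the mildest care is simply making sure the algebra-map and derivation reductions are stated cleanly; the arithmetic is routine.
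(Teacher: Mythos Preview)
Your proposal is correct and follows essentially the same approach as the paper: the paper's own proof is a three-sentence sketch citing Propositions~\ref{simplicial_prod} and~\ref{parabo_F} and then noting the $\Delta$ claim is formal from the $\partial$ claim, and you have simply fleshed out each of those steps. The only cosmetic blemish is the tautological clause ``while $\partial_y F_Q = \partial_y F_Q$'', which reads like an unfinished thought; otherwise the argument is sound and matches the paper.
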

\begin{proof}
  The fact that it is a morphism follows from
  Prop. \ref{simplicial_prod}. The derivation part follows from
  Prop. \ref{parabo_F}. The statement about $\Delta$ is a consequence
  of the statement about $\partial$.
\end{proof}

Let us call a \textit{cross-tree} the equivalence class of a
quadrangulation under the moves of twisting along an edge as defined
in \autoref{twisting}.

Cross trees can also be considered as abstract sets of squares, glued
together along their sides so as to form a tree-like shape, but where
the two possible gluings along an edge are not distinguished.

The notion of bridge introduced in \autoref{prop_bridge} still make
sense for cross-trees, as well as the associated notion of
connectedness.

By the theorem \ref{twisting_nests}, the set of serpent nests inside a
quadrangulation really only depends on the cross-tree.

One can define an algebra similar to $\mathcal{A}$ using cross-trees
instead of quadrangulations. This is a quotient algebra of $\mathcal{A}$.

If conjecture \ref{twisting_F} holds, the $F$-triangle only depends
on the cross tree, and therefore is defined on this quotient algebra.

\section{Enumerative aspects of quadrangulations}

\label{enum_section}

We shall compute here the number of connected quadrangulations up to
rotation and the number of connected cross-trees, up to isomorphism.

This is done using the classical language of species, see \cite{bergeron} for
the basics of this fundamental theory.

Let $X$ be the species of singletons. Let $E_2$ be the species of
unordered pairs. Let $C_4$ be the species of oriented cycles of length
$4$.

\subsection{Connected quadrangulations up to rotation}

Let $T$ be the species of all connected quadrangulations, $T^{\carre}$
the species of those pointed in a square, $T^{i}$ the species of those
pointed along an inner edge, and $T^{\carre,i}$ the species of those
pointed along a pair (square, adjacent inner edge).

Let $T^{b}$ be the species of quadrangulations that can be obtained
from a connected quadrangulation by cutting along an inner edge,
keeping one of the two halves and marking the cut edge. The species
$T^{b}$ is an auxiliary one, in which it is not allowed that the
square next to the marked edge has exactly one other neighbour opposite
to the edge, but where it is allowed that this square has only two
opposite neighbours. 

One has the following combinatorial relations:
\begin{align*}
  T^{\carre}+T^i&=T+T^{\carre,i},\\   
  T^i&=E_2(T^b),\\    
  T^{\carre,i}&=(T^b)^2,\\   
  T^{\carre}&=X (1+T^b+(T^b)^2+(T^b)^3 + C_4(T^b))),\\  
  T^b&=X (1+ 2 T^b+ 3 (T^b)^2 + (T^b)^3).   
\end{align*}
The first equation is an instance of the dissymetry equation for
trees, see \cite[Chap. 4]{bergeron} for the general principle behind
it. Other equations are just obvious recursive descriptions. The last
equation determines $T^b$ and the other ones in turn determines the
other species.

From these equations, one can compute the first few terms of the
associated generating series:
\begin{align*}
  T^b &: 1, 2, 7, 27, 114, 507, 2342, \dots\\  
  T   &: 1, 1, 1, 4, 11, 42, 155, 659, 2810, \dots\\  
  T^i &: 0, 1, 2, 10, 41, 196, 924,\dots\\
  T^\carre &: 1, 1, 3, 12, 52, 231, 1079,\dots\\  
  T^{\carre,i} &: 0, 1, 4, 18, 82, 385, 1848,\dots
\end{align*}


It is also interesting to consider the sub-species $T^{B}$ of $T^b$
where the square near the marked boundary edge does not have exactly
two opposite neighbours. In some sense, this condition is a stronger
form of connectedness. By removing one term in the equation defining
$T^{b}$, one gets that
\begin{equation}
  T^{B} = X (1 + 2 T^{b} + 2 (T^{b})^2 + (T^{b})^3),  
\end{equation}
which gives the first few terms
\begin{equation*}
  T^{B} : 1, 2, 6, 23, 96, 425, 1957, 9277, \dots  
\end{equation*}


\subsection{Connected cross-trees up to isomorphism}

Let us compute here the number of connected cross-trees.

Let $\T$ be the species of all connected cross-trees,
$\T^{\carre}$ the species of those pointed in a square, $\T^{i}$ the
species of those pointed along an inner edge, and $\T^{\carre,i}$ the
species of those pointed along a pair (square, adjacent inner edge).

Let $\T^{b}$ be the species of cross-trees that can be obtained from
a connected cross-tree by cutting along an inner edge,
keeping one of the two halves and marking the cut edge. The species
$\T^{b}$ is an auxiliary one, in which it is not allowed that the
square next to the marked edge has exactly one other neighbour opposite
to the edge, but where it is allowed that this square has only two
opposite neighbours.

One has the following combinatorial relations:
\begin{align*}
  \T^{\carre}+\T^i&=\T+\T^{\carre,i},\\
  \T^i&=E_2(\T^b),\\
  \T^{\carre,i}&=(\T^b)^2,\\
  \T^{\carre}&=X (1+\T^b+E_2(\T^b)+\T^b E_2(\T^b)+E_2(E_2(\T^b))),\\
  \T^b&=X (1+\T^b+(\T^b)^2+E_2(\T^b)+\T^b E_2(\T^b)).
\end{align*}
The first equation is again an instance of the dissymetry equation for
trees. Other equations are just obvious recursive descriptions. As
before, the last equation determines $\T^b$ and the other ones in turn
determines the other species.

From these equations, one can compute the first few terms of the
associated generating series for the number of isomorphism classes:
\begin{align*}
  \T^b &: 1, 1, 3, 7, 20, 58, 178, 557, \dots\\
  \T   &: 1,1,1,2,4,9,21,56,153,\dots\\
  \T^i &: 0,1,1,4,10,33,99,324,\dots\\
  \T^\carre &: 1,1,2,5,14,39,120,\dots\\
  \T^{\carre,i} &: 0,1,2,7,20,63,198,\dots
\end{align*}

Just as for quadrangulations, it is also interesting to consider the
sub-species $\T^{B}$ of $\T^b$ where the square near the marked boundary
edge does not have exactly two opposite neighbours. By removing one term in
the equation defining $\T^{b}$, one gets that
\begin{equation}
  \T^{B} = X (1 + \T^{b} + (\T^{b})^2 + \T^{b} E_2(\T^{b})),
\end{equation}
which gives the first few terms
\begin{equation*}
  \T^{B} : 1, 1, 2, 6, 16, 48, 145, 458, \dots
\end{equation*}


The first few terms of sequence $\T$ are illustrated in \autoref{taille5}.

\subsection{Connected quadrangulations with no cross}

For reasons related to fans (that are not considered in this article),
it is also interesting to count quadrangulations and cross-trees
avoiding the cross (which means that no square has four neighbours). We
will abuse notation by keeping the same names for these new species.

Let us start with quadrangulations. Using similar notations for
various species of connected quadrangulations avoiding the cross, one
has the following combinatorial relations:
\begin{align*}
  T^{\carre}+T^i&=T+T^{\carre,i},\\   
  T^i&=E_2(T^b),\\    
  T^{\carre,i}&=(T^b)^2,\\   
  T^{\carre}&=X (1+T^b+(T^b)^2+(T^b)^3),\\  
  T^b&=X (1+ 2 T^b+ 3 (T^b)^2).   
\end{align*}
The first equation is an instance of the dissymetry equation. Other
equations are just obvious recursive descriptions. The last equation
determines $T^b$ and the other ones in turn determines the other
species.

From these equations, one can compute the first few terms of the
associated generating series:
\begin{align*}
  T^b &: 1, 2, 7, 26, 106, 452, 1999, \dots\\  
  T   &: 1, 1, 1, 4, 10, 40, 141, \dots\\  
  T^i &: 0, 1, 2, 10, 40, 186, 846,\dots\\
  T^\carre &: 1, 1, 3, 12, 50, 219, 987,\dots\\  
  T^{\carre,i} &: 0, 1, 4, 18, 80, 365, 1692,\dots
\end{align*}

\subsection{Connected cross-trees with no cross}

Using the same notations as before, one has the following
combinatorial relations:
\begin{align*}
  \T^{\carre}+\T^i&=\T+\T^{\carre,i},\\
  \T^i&=E_2(\T^b),\\
  \T^{\carre,i}&=(\T^b)^2,\\
  \T^{\carre}&=X (1+\T^b+E_2(\T^b)+\T^b E_2(\T^b)),\\
  \T^b&=X (1+\T^b+(\T^b)^2+E_2(\T^b)).
\end{align*}
The first equation is an instance of the dissymetry equation for
trees. Other equations are just obvious recursive descriptions. The last
equation determines $\T^b$ and the other ones in turn determines the
other species.

From these equations, one can compute the first few terms of the
associated generating series for the number of isomorphism classes:
\begin{align*}
  \T^b &: 1, 1, 3, 6, 17, 44, 128, 365, 1091,\dots\\
  \T   &: 1, 1, 1, 2, 3, 8, 16, 42, 102,\dots\\
  \T^i &: 0, 1, 1, 4, 9, 29, 79, 244, 727,\dots\\
  \T^\carre &: 1, 1, 2, 5, 12, 34, 95, 280, 829,\dots\\
  \T^{\carre,i} &: 0, 1, 2, 7, 18, 55, 158, 482, 1454,\dots
\end{align*}

The reader can check the first few terms of sequence $\T$ in \autoref{taille5}.

\bibliographystyle{plain}
\bibliography{stokes_new}

\verb?chapoton@math.univ-lyon1.fr?\\
Frédéric Chapoton\\
Université de Lyon\\
CNRS UMR 5208\\
Université Lyon 1\\
Institut Camille Jordan\\
43 blvd. du 11 novembre 1918\\
F-69622 Villeurbanne cedex\\
France

\end{document}